\numberwithin{equation}{section}
 \renewcommand\section{\@startsection {section}{1}{\z@}%
     {-4.5ex \@plus -1ex \@minus -.2ex}%
     {2.3ex \@plus.8ex}%
    {\centering\scshape}}
\def\C{\mathcal{C}}
\def\E{\mathcal{E}}
\def\F{\mathcal{F}}
\def\I{\mathcal{I}}
\def\V{\mathcal{V}}
\def\U{\mathcal{U}}
\def\P{\mathcal{P}}
\def\PP{\mathbb{P}}
\def\CC{\mathbb{C}}
\def\ZZ{\mathbb{Z}}
\def\Pic{\operatorname{Pic}}
\def\Hom{\operatorname{Hom}}
\def\Ext{\operatorname{Ext}}
\def\supp{\operatorname{Supp}}
\def\End{\operatorname{End}}
\def\cliff{\operatorname{Cliff}}
\def\wt{\widetilde}
\def\wh{\widehat}
\def\oc2{\mathcal{O}_{\C_2}}
\newtheorem{theorem}{Theorem}[section]
\newtheorem{prop}[theorem]{Proposition}
\newtheorem{lem}[theorem]{Lemma}
\newtheorem{defin}[theorem]{Definition}
\newtheorem{rem}[theorem]{Remark}
\newtheorem{cor}[theorem]{Corollary}
\newenvironment{customthm}[1]
  {\innercustomthm}
  {\endinnercustomthm}
\newcommand{\be}{\begin{equation}}
\newcommand{\ee}{\end{equation}}
\begin{document}

\title[Mukai program]{
Mukai's program for curves on a K3 surface}

\author{E. Arbarello,   A. Bruno,  E. Sernesi}

\email{ea@mat.uniroma1.it, bruno@mat.uniroma3.it, sernesi@mat.uniroma3.it}
\address{Dipartimento di Matematica, Sapienza, Universit\`a di Roma, Roma. }
\address{ Dipartimento di Matematica e Fisica, Universit\`a Roma Tre, Roma.}
\address{ Dipartimento di Matematica e Fisica, Universit\`a Roma Tre, Roma.}

\maketitle

\begin{abstract}Let $C$ be a general element in the locus of curves in $M_g$ lying on some K3 surface, where
$g$ is congruent to 3 mod 4 and greater than or equal to 15. Following Mukai's ideas, we show how to reconstruct the K3 surface as a Fourier-Mukai transform of a Brill-Noether locus of rank two vector bundles on $C$.
\end{abstract}

\tableofcontents

\section{Introduction}\label{intro}

Let $\mathcal{K}_g$ be the moduli stack of pairs $(S,H)$ where $S$ is a K3 surface and  $H$ is a very ample line bundle on $S$ such that
$H^2=2g-2$. Let $\mathcal{P}_g$ be the stack of pairs $(S,C)$ such that $(S,H)\in \mathcal{K}_g$ and $C\in |H|$  is smooth and irreducible. Finally let $\mathcal{M}_g$ be the moduli stack of smooth curves of genus $g$. The stacks $\mathcal{K}_g, \mathcal{P}_g, \mathcal{M}_g$  are smooth Deligne-Mumford stacks of dimensions $19, 19+g, 3g-3$ respectively.  We have natural morphisms:
\begin{equation}\label{stacks}
\xymatrix{
& \mathcal{P}_g \ar[dr]^-{\kappa_g} \ar[dl]_-{m_g} \\
\mathcal{M}_g&&\mathcal{K}_g }
\end{equation}
where $\kappa_g$ realizes $\mathcal{P}_g$ as an open subset of a $\PP^g$-fibration.
In \cite{Cil-Lop-Mir}, Theorem 5,  the authors prove that for $g=11$ and $g\geq13$ the morphism  $m_g$ is a birational map onto its image using properties of the Gauss map for the canonical divisor (also known  as Wahl map). Related references are \cite{Wahl-square}, \cite{Wahl-sections}.
%For more references see also Section 3 in \cite{FKPS}.

In the last page of  \cite{Mukai-BN-Fano}, Mukai laid out a beautiful program to actually reconstruct a K3 surface from a curve lying on it,
thus giving a rational inverse of $m_g$,
whenever the genus $g$ is congruent to $3$, mod $4$ and greater or equal than $11$,  and this program
was successfully carried out by him, for the case  $g=11$, in \cite{Mukai2}.

 In our work we take Mukai's 
paper \cite{Mukai2} as a blueprint and generalize it to all genera which are congruent to $3$, mod $4$ and greater or equal than $11$.

Let $(S, C)$ be a general point in $\P_g$, with $g=2s+1, \ s\ge 5$ odd. Mukai's strategy  to reconstruct the surface $S$ from the curve $C$ is as follows:
 consider the Brill-Noether locus $M_C(2, K_C, s)$ which is the moduli space of semistable rank-two vector bundles on $C$ having canonical determinant and possessing at least $s+2$ linearly independent sections. Then  $M_C(2, K_C, s)$
is a K3 surface and the surface $S$ can be obtained as an appropriate Fourier-Mukai transform of it.

When $g=11$ the proof consists of three main steps. One first  considers pairs $(S',C')$ where $S'$ is a  K3 surface of a special type, and proves with ad-hoc constructions that $M_{C'}(2,K_{C'},s)$ is isomorphic   to $S'$ by showing that both are isomorphic to a moduli space $M_v(S')$ of vector bundles on $S'$. 
The second step consists in deforming $(S',C')$ to a general pair $(S,C)$:  since $M_C(2,K_C,5)$  has expected dimension equal to two it is a flat deformation of $M_{C'}(2,K_{C'},s)$, thus it is again   a K3 surface.   Finally  one shows the existence of an appropriate polarization $h$ on $M_C(2,K_C,5)$ which induces an isomorphism between $S$ and the Fourier-Mukai transform  of $M_C(2,K_C,5)$ with respect to $h$.   

The first difficulty in trying to extend this proof is that when $g=2s+1$, $s\geq 6$,  the expected dimension of $M_C(2, K_C, s)$ is zero for $s=6$ and negative for $s\ge 7$, so that it is not even clear that $M_C(2, K_C, s)$ is non-empty when $s \ge 7$. 
 However, in her  paper \cite{Voisin_W},  Voisin associates a  rank-two vector bundle $E_L$   to each base-point-free pencil $|L|$ on $C$ of degree $s+2$. Each of these bundles 
is exhibited as an extension 
$$
0\to K_CL^{-1}\to E_L\to L\to 0
$$
and one can  prove  that Voisin's bundles $E_L$ are stable, ( see, for instance, Lemma \ref{stab_pic_z}, Proposition \ref{s_to_c} and Remark \ref{stab_voisin2}) and that, as $L$ varies in $W^1_{s+2}(C)$, they describe a one-dimensional locus in $M_C(2, K_C, s)$.

 Consider on the K3 surface $S$  the Mukai vector $v=(2, [C], s)$
and denote by $M_v(S)$ the moduli space of $[C]$-stable, rank-two vector bundles $E$ on $S$ with $c_1(E)=[C]$ and $\chi(S,E)=s+2$.
For a general K3 surface $M_v(S)$ is again a smooth K3 surface. One of our main results is the following:

\begin{customthm}{\ref{Tmain}} For a general   $(C,S)\in \P_g$, $g=2s+1$, $s\geq 5$, there is a unique irreducible component  $V_C(2, K_C, s)$ of $M_C(2, K_C, s)$ containing the Voisin's bundles $E_L$.
By sending $\E$ to $\E_{|C}$ one obtains a   well defined isomorphism
\be \label{iso-sigma}
\aligned
\sigma: M_v(S)\to V_C(2, K_C, s)_{red}
\endaligned
\ee
 In particular
$V_C(2, K_C, s)_{red}$ is a smooth K3 surface.
\end{customthm}

Note that we only assumed $g$ to be odd in Theorem \ref{Tmain}. Let now $M_C(2, K_C)$ be the moduli space of rank two vector bundles on $C$ with determinant 
equal to $K_C$. 
Write, for simplicity, $T=V_C(2, K_C, s)_{red}\subset M_C(2, K_C)$. Following Mukai' s program, 
 let 
 $\U$ be   a universal bundle on  $C\times T$, 
 let $\pi_C$ and $\pi_T$ be the natural projections from $C\times T$ to $C$ and $T$, respectively,
 and consider the {\it determinant of the cohomology}
 $$
 h_{det}=(\det R^1{\pi_T}_*\,\U)\otimes(\det{\pi_T}_*\,\U)^{-1}
 $$
For $s$ odd, (i.e. $g\equiv 3\mod 4$), we  prove that $ h_{det}$ is a genus $g$ polarization on $T$ and that $\U$ can be chosen in such a way that the map
$$
\aligned
&C\longrightarrow \wh T=M_{\wh v}(T)\\
&x\mapsto \U_{|\{x\}\times T}
\endaligned
$$
is an embedding and we have the following theorem (see also the more detailed statement in \S \ref{FM}).

\begin{customthm}{\ref{torelli_type}}Let $(C,S)$ be  a general point of $\P_g$, where $g=2s+1$, and $s$ is odd and greater than or equal to $5$.
Let $T=V_C(2, K_C, s)_{red}$.
Consider the Mukai vector $\wh v=(2, h_{det}, s)$. Then any K3 surface containing $C$ is isomorphic to 
$\wh T=M_{\wh v}(T)$.
\end{customthm}

In proving both Theorem \ref{Tmain} and Theorem  \ref {torelli_type}, the basic tool consists  in degenerating the surface $S$ to   a rather special
K3 surface where both the geometry of the moduli space $M_v(S)$ and  the properties of the morphism $\sigma: M_v(S)\to V_C(2, K_C, s)_{red}$ are made transparent by virtue of  an explicit isomorphism $S\cong M_v(S)$ (see Proposition \ref{S-to-M}).

The special K3 surfaces we consider are the direct generalizations of those considered by Mukai in his analysis of the genus 11 case. Namely, we consider a K3 surface $S$ such that 
\be\label{pic-2}
\Pic(S)=\ZZ\cdot [A]\oplus\ZZ\cdot [B]\,,
\ee
with 
\be\label{a-piu-b}
|C|=|A+B|\,,\qquad \text{and }\quad g(A)=s\,,\quad g(B)=1
\ee
This means that the elliptic  pencil $|B|$ cuts out on $C$ a $g^1_{s+1}$ which we call $\xi$, while the linear system $|A|$ cuts out on $C$ the residual series, a $g^s_{3s-1}$ which we call $\eta$.
An isomorphism 
\be\label{iso-rho}
\rho:\, S\longrightarrow M_v(S)
\ee
 is obtained by assigning to each $x\in S$ the vector bundle $\E_x$ defined as the unique extension
$$
0\to\mathcal{O}_S(B)\to\E_x\to I_x(A)\to 0
$$
The isomorphism $\rho$ makes $S$ {\it self-dual}, from the Fourier-Mukai point of view. Such self-duality  is the key to prove  Theorem \ref{Tmain} for pairs $(C,S)$ satisfying (\ref{pic-2}) and (\ref{a-piu-b}).  Moreover, in this case  $S\cong M_C(2, K_C,s)=V_C(2, K_C,s)$  (Theorem \ref{caso(s+1)gon}). The geometry of the special surface $S$ is quite different from the $g=11$ case, and requires a number of new auxiliary results that are proved in \S \ref{notation}. Moreover the negativity of the expected dimension of $M_C(2, K_C,s)$ is the reason for some lengthening in the proof of Theorem 
\ref{caso(s+1)gon}.

The embedding of $S$ in $\PP^s$ via the linear series $|A|$ also
plays a fundamental role. 
Denote by $T$ and $\Gamma$ the images of $S$ and $C$ respectively,  via this embedding. Then, the quadratic hull of $T$ coincides with the quadratic hull of $\Gamma$ and, as such, it classifies extensions on $C$:
$$
0\to\xi\to E\to \eta\to 0
$$
The rank-two vector bundles $E$,  or better their stable models, obtained in this way, parametrize $M_C(2, K_C,s)$
giving, via (\ref{iso-rho}),  a geometrical interpretation of the isomorphism (\ref{iso-sigma}).

In the general case Theorem \ref{Tmain} is proved by a variational argument similar to Mukai's, with the added difficulty coming from the negativity of the expected dimension of the Brill-Noether locus. We consider a family of pairs $(S,C)$ with a special fibre satisfying (\ref{pic-2}) and containing a general pair with Picard rank one among its fibres. By applying some deformation theory arguments we are able to control the behaviour of the map (\ref{iso-rho}) on the general fibre, overcoming the fact that  $M_C(2, K_C,s)$ has negative expected dimension.

\vskip 1 cm 
{\it Acknowledgments: }We heartily thank Claire Voisin 
for suggesting a correction and for a number of very useful remarks.
We would also like to thank  Vittoria Bussi,
Claudio Fontanari, and Giulia Sacc\`a  for  interesting conversations on the subject of this paper.
We are grateful to the referee for making a number of very useful remarks that have   contributed to improve the paper
significantly.

%%%%%%%%%%%%%%%%%%%%%%%%%%%%%%%%%%%%

\section{Moduli of sheaves on a K3 surface}\label{sheaves_on_s}

Let $(S,C)$ be a pair consisting of a K3 surface $S$ and a nonsingular curve $C \subset S$ of genus
$$
g(C)=g=2s+1, 
$$
for some $ s \ge 5$.  We let $M_{v,C}(S)$ be the moduli space   of $[C]$-semistable sheaves
with Mukai vector $v$ on $S$ and polarization $[C]$. The Mukai vector of a sheaf $F$ is given by
$$
v(F)= (r(F), c_1(F),\chi(F)-r(F))
$$
Where $r(F)$ denotes the rank of $F$. 
From now on  we consider the case in which the Mukai vector $v$ is given by
\be\label{mukai_v}
v= (2, [C],s)
\ee
 and we will write
$$
M_{v,C}(S)=M_{v}(S)
$$
As already anticipated in the Introduction, in this paper we will mostly consider the following two cases, to which we give a name.

 \begin{itemize}
	 \item  \emph{Rank-1} case: $\Pic(S)=\ZZ\cdot[ C]$.

\item \emph{Rank-2} case:  $\Pic(S)=\ZZ\cdot [A]\oplus\ZZ\cdot [B]$, with $[C]=[A+B]$, $A$ and $B$ nonsingular connected and 
$g(A)=s$, $g(B)=1$.   We then write
$$
\mathcal{O}(B)_{|C}=\xi\,,\qquad \mathcal{O}(A)_{|C}=\eta
$$
so that $\xi$ is a $g^1_{s+1}$ and $\eta$, the residual of $\xi$, is a $g^{s}_{3s-1}$. For the existence of K3 surfaces of this type
we refer to Theorem 1.1 in \cite{Knutsen-smooth}.
\end{itemize}

\begin{lem}\label{nowalls}
 In both the Rank-1 and the Rank-2 cases the Mukai vector  (\ref{mukai_v}) is primitive and there are no walls relative to $v$.
\end{lem}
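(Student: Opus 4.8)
The plan is to dispatch primitivity by inspection and then reduce wall-freeness to one lattice identity in $\mathrm{NS}(S)$, treating the two cases at the end. \emph{Primitivity} is immediate: reading off the coordinates of $v=(2,[C],s)$ in the Mukai lattice $\ZZ\oplus\mathrm{NS}(S)\oplus\ZZ$, the middle entry is $1$ in the Rank-1 case (where $\mathrm{NS}(S)=\ZZ[C]$), while in the Rank-2 case $[C]=[A]+[B]$ has coordinates $(1,1)$ in $\ZZ[A]\oplus\ZZ[B]$; either way some coordinate of $v$ equals $1$, so $v$ is primitive.

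\emph{Reduction of wall-freeness to a lattice identity.} Since $r(v)=2$, a wall is cut out by a saturated rank-one subsheaf $F\subset E$ of a semistable $E$ with $v(E)=v$. Writing $v=v_1+v_2$ with $v_1=v(F)=(1,D,b_1)$ and $v_2=(1,[C]-D,b_2)$, and setting $\xi:=2D-[C]=c_1(v_1)-c_1(v_2)\in\mathrm{NS}(S)$, the wall is the locus $\xi^\perp\cap\mathrm{Amp}(S)$ on which $v_1$ and $v$ acquire equal slope. For a genuine wall one needs $\xi\neq0$ and $\xi\cdot H=0$ for some ample $H$, so Hodge index forces $\xi^2<0$. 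The computation I would rely on is
\be\label{identity}
\langle v_1,v_1\rangle+\langle v_2,v_2\rangle=\tfrac12\,\xi^2,
\ee
coming from $\langle v,v\rangle=[C]^2-4s=0$ and $[C]^2=2g-2=4s$. As the Jordan--H\"older factors on a wall have rank one and are hence stable, $\langle v_i,v_i\rangle\ge-2$, so \eqref{identity} gives $\xi^2\ge-8$ and forces at least one of $v_1,v_2$ to be spherical.

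\emph{The two cases.} In the Rank-1 case $\xi=(2m-1)[C]$, hence $\xi^2=(2m-1)^2(2g-2)>0$ and no class meets $\xi^2<0$: there are no walls. In the Rank-2 case I would use $A^2=2s-2$, $B^2=0$, $A\cdot B=s+1$. A direct check shows $\mathrm{NS}(S)$ has no $(-2)$-class (the equation $\alpha\big[(s-1)\alpha+(s+1)\beta\big]=-1$ has no integral solution for $s\ge5$), so $S$ carries no $(-2)$-curve and $\mathrm{Amp}(S)$ is the full positive cone. Writing $\xi=\alpha[A]+\beta[B]$ with $\alpha,\beta$ odd (forced by $\xi\equiv[C]\bmod2\,\mathrm{NS}(S)$), the conditions $-8\le\xi^2<0$ force $\alpha,\beta$ of opposite sign with $|\alpha|=|\beta|=1$, leaving only $\xi=\pm([A]-[B])$, where $\xi^2=-4$. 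Although this class does meet $\mathrm{Amp}(S)$, its only realizing decompositions are $v=v(\mathcal O_S(A))+v(I_x(B))$ and $v=v(I_x(A))+v(\mathcal O_S(B))$, each of which (in accordance with \eqref{identity}, where $\tfrac12\xi^2=-2$) contains a rigid, spherical factor $\mathcal O_S(A)$ or $\mathcal O_S(B)$. Such a decomposition does not define a wall for $v$: a genuine wall requires both $v_i$ to move in positive-dimensional families, i.e. $\langle v_i,v_i\rangle\ge0$, which \eqref{identity} forbids. Hence there are no walls relative to $v$ in either case.

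\emph{Main obstacle.} The work is entirely in the Rank-2 case: one must verify that $\mathrm{Amp}(S)$ is the full positive cone (so the candidate cannot be dismissed on cone-geometric grounds) and then show that the lone surviving class $[A]-[B]$ fails to be a wall precisely because its destabilizing sub-objects are the rigid bundles $\mathcal O_S(A),\mathcal O_S(B)$. Identity \eqref{identity} is the engine of both steps, collapsing an a priori unbounded search to a single class and simultaneously pinning down its spherical summand.
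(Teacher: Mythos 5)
Your primitivity argument, the identity $\langle v_1,v_1\rangle+\langle v_2,v_2\rangle=\tfrac12\xi^2$, and the lattice classification are all correct --- you in fact go further than the paper, which never classifies all candidate classes --- but your last step contains a genuine error. The class $\xi=\pm([A]-[B])$ \emph{does} define a wall relative to $v$. The notion of wall the lemma invokes (Definition 4.C.1 of Huybrechts--Lehn) is purely numerical: any $\xi$ with $\xi+c_1(v)\in 2\,\mathrm{NS}(S)$, with $-\Delta\le\xi^2<0$, and with $\xi^\perp\cap\mathrm{Amp}(S)\neq\emptyset$ gives one, and you verified all three conditions yourself. There is no requirement that both Jordan--H\"older factors move in positive-dimensional families; spherical destabilizers produce perfectly genuine walls, and this very paper exhibits the witnesses: the bundles $\E_x$ of Section \ref{bn_vb2}, the extensions of $I_x(A)$ by $\mathcal{O}_S(B)$ in (\ref{schwarz}), are strictly $\mu_H$-semistable for any ample $H$ with $H\cdot(A-B)=0$ and become $\mu_H$-unstable (destabilized by $\mathcal{O}_S(B)$) as soon as $H\cdot(A-B)<0$, while they are $[C]$-stable by Proposition \ref{stab-ex}. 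So the set of stable sheaves genuinely changes across $\xi^\perp$, refuting your principle that a wall forces $\langle v_i,v_i\rangle\ge 0$.

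What saves the lemma --- and what the paper's proof actually establishes --- is weaker than what you aimed at: not that the ample cone is wall-free, but that no wall passes through the chosen polarization $[C]$, which is all that the application (Lemma \ref{L:nowalls1}, via Theorem 4.C.3 of Huybrechts--Lehn) requires. The paper builds the constraint $\mu_C(F')=\mu_C(F)$, i.e.\ $\lambda\cdot C=0$, into the setup from the start; substituting this relation into $\lambda^2$ for $\lambda=(2h-1)A+(2k-1)B$ gives $\lambda^2=-4s(2h-1)^2\le -4s<-\Delta=-8$ for $s\ge 5$, so no admissible class is orthogonal to $[C]$. In your framework you are one line away from the same conclusion: for the lone surviving class, $([A]-[B])\cdot[C]=A^2-B^2=2s-2\neq 0$, hence $[C]\notin W^{\pm([A]-[B])}$ and $[C]$-semistability coincides with $[C]$-stability for vector $v$. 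Replace your rigidity argument by this one-line computation (and read the lemma's conclusion as ``no wall through $[C]$,'' which is how it is stated in the proof and used in the sequel); as written, your proof of the literal statement cannot be repaired, because in the Rank-2 case that literal statement is false.
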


\begin{proof}
In the  Rank-1 case the lemma is obvious. Let us assume that we are in the Rank-2 case. The curve $C$ is clearly primitive and therefore $v$ is primitive as well. 
Let us  show that there are no walls relative to $v$ in the ample cone of $S$ (see \cite{Huybrechts-Lehn} Definition 4.C.1). 
Following the notation of Theorem 4.C.3 in  \cite{Huybrechts-Lehn}, it suffices to prove that there is no element 
 $$
\lambda=2c_1(F')-c_1(F)
 $$
 where $F$ is a $\mu_C$-semistable sheaf in $M_{v,C}(S)$
 and $F'\subset F$ is a rank-1 subsheaf with $\mu_C(F')=\mu_C(F)$,
 such that
\be\label{dis_delta}
 -\Delta\leq\lambda^2\leq 0
 \ee
 where $\Delta=4c_2(F)-c_1^2(F)=4(s+2)-4s=8$.
 We may write $c_1(F')=hA+kB$. Now (\ref{dis_delta}) reads
 $$
-8\leq(2h-1)^2(2s-2)+2(2k-1)(2h-1)(s+1)\leq 0
 $$
The equality  $\mu_C(F')=\mu_C(F)$, gives  $2h(s-1)+k(s+1)=2s$ and the above inequalities   can be written as 
 $$
-4\leq-s(2h-1)^2\leq0
 $$
and this  has no solutions for $s\geq5$. So there are no walls.  
\end{proof}

\begin{lem}\label{L:nowalls1}
Assume that the Mukai vector (\ref{mukai_v}) is primitive and there are no walls relative to $v$. Then the moduli space  $M_{v}(S)$, if not empty, is a smooth K3 surface
and all of its points represent {\it locally free} sheaves. In particular this happens if we are in the Rank-1 or in the Rank-2 case.
\end{lem}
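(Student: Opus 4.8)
The plan is to deduce all three assertions—the K3 structure, smoothness, and local freeness—from the standard theory of moduli of sheaves on K3 surfaces together with the hypotheses that $v$ is primitive and wall-free. First I would record the numerical data of the Mukai vector $v=(2,[C],s)$. The Mukai pairing gives
\[
\langle v,v\rangle = c_1(F)^2 - 2r(F)\bigl(\chi(F)-r(F)\bigr) = (2g-2) - 2\cdot 2\cdot s = (4s)-4s = 0,
\]
so that $\langle v,v\rangle=0$ and the expected dimension $\langle v,v\rangle+2$ of $M_v(S)$ equals $2$. Since $v$ is primitive and there are no walls relative to $v$, every $[C]$-semistable sheaf is in fact $[C]$-stable; by the general results of Mukai, O'Grady, and Huybrechts--Lehn (see \cite{Huybrechts-Lehn}), the moduli space $M_v(S)$ of stable sheaves with primitive isotropic Mukai vector is then smooth of dimension $\langle v,v\rangle+2=2$, and being a smooth symplectic surface it is a K3 surface. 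The absence of walls is precisely what guarantees that semistability and stability coincide, so there are no strictly semistable sheaves and the moduli space is fine and smooth; this is where I would invoke Lemma \ref{nowalls} to see that the hypotheses hold in both the Rank-1 and Rank-2 cases.

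It remains to prove that every point of $M_v(S)$ represents a \emph{locally free} sheaf. The key observation is that a torsion-free sheaf $F$ of rank $2$ on the smooth surface $S$ sits in an exact sequence $0\to F\to F^{\vee\vee}\to Q\to 0$, where $F^{\vee\vee}$ is its reflexive hull—hence locally free, since reflexive sheaves on a smooth surface are locally free—and $Q$ is supported on a finite set of points with length $\ell=\ell(Q)\ge 0$. I would argue that $\ell=0$ by a numerical comparison. Passing to the double dual changes the Mukai vector only in its last entry: one computes $v(F^{\vee\vee})=(2,[C],s+\ell)$, so that $\langle v(F^{\vee\vee}),v(F^{\vee\vee})\rangle = (2g-2)-2\cdot2\cdot(s+\ell) = -4\ell \le 0$. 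If $F$ is stable then so is $F^{\vee\vee}$ (the inclusion $F\hookrightarrow F^{\vee\vee}$ is an isomorphism in codimension one, so destabilizing the bidual would destabilize $F$), and stability of a simple sheaf forces $\langle w,w\rangle\ge -2$ for its Mukai vector $w$. Therefore $-4\ell\ge -2$, which is impossible for $\ell\ge 1$; hence $\ell=0$, $F=F^{\vee\vee}$ is locally free, and every point of $M_v(S)$ represents a vector bundle.

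The main obstacle I anticipate is the local-freeness step rather than the K3 structure, which is entirely formal given the primitivity and wall-free hypotheses. The delicate point is justifying the inequality $\langle w,w\rangle\ge -2$ for the bidual, i.e. ruling out the possibility that $F^{\vee\vee}$ fails to be stable or that its class violates the bound; here one must use that $F^{\vee\vee}$ is still $[C]$-stable (which follows because the quotient $Q$ is a torsion sheaf and cannot affect slopes) together with the fact that a $\mu$-stable, hence simple, sheaf on a K3 surface has self-intersection $\ge -2$ by Riemann--Roch and the vanishing $\Hom(F^{\vee\vee},F^{\vee\vee})=\CC$. One subtlety to check carefully is that the wall-free condition for $v$ also precludes walls for the shifted vector $v(F^{\vee\vee})=(2,[C],s+\ell)$, or alternatively that the direct numerical contradiction above suffices without re-examining walls; I expect the cleanest route is the self-intersection inequality, which bypasses any further wall analysis.
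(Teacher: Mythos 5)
Your proposal is correct and follows essentially the same route as the paper: primitivity and the absence of walls give stability of all semistable sheaves, the isotropic primitive Mukai vector gives a smooth projective surface which is a K3 by Mukai's results, and local freeness comes from the equivalence of $[C]$-stability with $\mu$-stability. Your explicit double-dual computation ($v(F^{\vee\vee})=(2,[C],s+\ell)$, $\langle v(F^{\vee\vee}),v(F^{\vee\vee})\rangle=-4\ell\ge -2$, hence $\ell=0$) simply unpacks the content of Remark 6.1.9 in \cite{Huybrechts-Lehn}, which the paper cites instead of reproving.
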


\begin{proof}
     Applying Theorem 4.C.3 and Lemma 1.2.13 in \cite{Huybrechts-Lehn} we deduce that all sheaves in $M_{v}(S)$ are $[C]$-stable.  Since $v$ is isotropic it follows that $M_{v}(S)$ is a smooth and irreducible projective surface (\cite{Huybrechts-Lehn}, Theorem 6.1.8) which is a K3 by \cite{Mukai}. Since in this case $[C]$-stability is equivalent to $\mu$-stability from \cite{Huybrechts-Lehn}  Remark 6.1.9 p. 145 it follows that all the points of $M_{v}(S)$ represent locally free sheaves.  The last assertion is a consequence of Lemma \ref{nowalls}.
\end{proof}

From the lemma it follows that, under its assumptions,    $[C]$-(semi)stability is computed in terms of the $C$-slope which is defined by
$$
\mu_C(F)=\frac{c_1(F)\cdot C}{r(F)}
$$
In particular this happens if we are in the the Rank-1 or in the Rank-2 case.
 Let us recall the definition of {\it Lazarsfeld-Mukai bundle}. 

\begin{defin}\label{Laz-Muk_def}  Let  $L$ be a globally generated  pencil on   $C\subset S$. The \emph{Lazarsfeld-Mukai bundle} $\wt E_{L}$ is the dual of the rank-2 vector bundle $\wt F_{L}$  defined by the exact sequence
$$
0\to \wt F_{L}\to H^0(L)\otimes\mathcal{O}_S\overset{ev}\to L\to 0
$$
\end{defin}

\begin{rem}\label{notationLM}\rm Often,  in the literature,  the bundle $\wt F_{L}$ is denoted by the symbol $F_{C,L}$
and its dual bundle $\wt E_{L}$ is denoted by the symbol $E_{C,L}$
\end{rem}

For these bundles one easily computes the basic invariants:

\be\label{inv1}
\aligned
r(\wt E_L)&=2\,,\quad c_1(\wt E_L)=[C]\,,\quad c_2(\wt E_L)=\deg L\,,\\
h^0(\wt F_L)&=h^0(\wt E_L(-C))= h^2(\wt E_L)=0\,,\\
h^1(\wt F_L)&=h^1(\wt E_L(-C))= h^1(\wt E_L) =0\,,\\
h^0(\wt E_L)&=h^0(L)+h^1(L) \endaligned
\ee
As far as the $C$-slope is concerned, we have

\be\label{slope1}
\mu_C(\wt E_L)=2s
\ee

 We will need the following Lemma (see also Remark \ref{stab_voisin2}).
 
\begin{lem}\label{stab_pic_z} Assume that we are in the Rank-1 case. Let $|L|$ be a $g^1_{s+2}$ on $C$.
Then $\wt E_L$ is stable. In particular $M_v(S) \ne \emptyset$.
\end{lem}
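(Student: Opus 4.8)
The plan is to reduce stability to a slope inequality for line subbundles and then invoke the vanishing already recorded in (\ref{inv1}). Since we are in the Rank-1 case, Lemma \ref{L:nowalls1} and the remark following it guarantee that $[C]$-stability of $\wt E_L$ coincides with $\mu_C$-stability, so I only need to control the saturated rank-one subsheaves $F\subset\wt E_L$. On the smooth surface $S$ each such $F$ is reflexive, hence a line bundle, and because $\Pic(S)=\ZZ\cdot[C]$ it must be of the form $F=\mathcal{O}_S(nC)$ for some integer $n$.

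Next I would run the slope comparison. From (\ref{slope1}) we have $\mu_C(\wt E_L)=2s$, whereas $\mu_C(\mathcal{O}_S(nC))=n\,(C\cdot C)=4ns$ since $C\cdot C=2g-2=4s$. Thus $F$ could destabilize $\wt E_L$ only when $4ns\ge 2s$, i.e. $n\ge 1$; moreover equality of slopes would force $n=1/2\notin\ZZ$, so semistability and stability coincide and it suffices to exclude inclusions $\mathcal{O}_S(nC)\hookrightarrow\wt E_L$ with $n\ge 1$.

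The decisive step is to rule these out. Given such an inclusion, I would precompose with the natural inclusion $\mathcal{O}_S(C)\hookrightarrow\mathcal{O}_S(nC)$ afforded by a section of the effective class $(n-1)C$, obtaining a nonzero homomorphism $\mathcal{O}_S(C)\to\wt E_L$, that is, a nonzero element of $H^0(\wt E_L(-C))$. But (\ref{inv1}) asserts $h^0(\wt E_L(-C))=h^0(\wt F_L)=0$, a contradiction. Hence no destabilizing subsheaf exists and $\wt E_L$ is stable.

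For the final assertion $M_v(S)\ne\emptyset$ I would simply check that $\wt E_L$ represents a point of $M_v(S)$, i.e. that its Mukai vector equals $v$. Using (\ref{inv1}) together with Riemann--Roch on $C$ for the $g^1_{s+2}$ $|L|$ gives $h^0(\wt E_L)=h^0(L)+h^1(L)=s+2$ and $h^1(\wt E_L)=h^2(\wt E_L)=0$, whence $\chi(\wt E_L)=s+2$ and $v(\wt E_L)=(2,[C],s)=v$. I expect no serious obstacle here: once the reduction to line bundles $\mathcal{O}_S(nC)$ is made and the slope threshold is pinned down, the entire argument turns on the single vanishing $h^0(\wt E_L(-C))=0$ supplied by (\ref{inv1}), so the only care needed is the bookkeeping of the inequality $n\ge 1$ and the passage from a general rank-one subsheaf to its saturation.
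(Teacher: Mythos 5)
Your proof is correct and follows essentially the same route as the paper's: both reduce a destabilizing subsheaf to $\mathcal{O}_S(nC)$ with $n\ge 1$ (via saturation and $\Pic(S)=\ZZ\cdot[C]$) and derive a contradiction, the paper via $h^0(S,\mathcal{O}_S(nC))>h^0(S,\wt E_L)=s+2$ and you via the equivalent vanishing $h^0(\wt E_L(-C))=0$ from (\ref{inv1}). The only point you skip, which the paper addresses in its first sentence, is that the $g^1_{s+2}$ is automatically base-point-free (since $C$ is Petri by Lazarsfeld's theorem and hence carries no $g^1_{s+1}$); this is needed for $\wt E_L$ to be defined as in Definition \ref{Laz-Muk_def} and for the invariants in (\ref{inv1}) that you rely on.
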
 

\begin{proof} Observe that   the $g^1_{s+2}$
is automatically base-point-free because, by Lazarsfeld's proof of Petri conjecture, the curve $C$ is Petri
and thus has no $g^1_{s+1}$. Since $\Pic(S)=\ZZ\cdot [C]$ and $c_1(\wt E_L)=[C]$, we may assume that a destabilizing
sequence has the form
$$
0\to\mathcal{O}(nC)\to \wt E_L\to  I_X((1-n)C)\to 0
$$
where $n\geq 1$ and
 $X\subset S$ is a zero-dimensional subscheme. But this is absurd since $h^0(S, \mathcal{O}(nC))>h^0(S, \wt E_L)=s+2$. The last assertion is obvious.
\end{proof}

\begin{defin}Let $L$ be a globally generated pencil on  $C$. The restriction to $C$ of the Lazarsfeld-Mukai bundle $\wt E_L$ is called the Voisin bundle of $L$ and it is denoted by the symbol $E_L$.
\end{defin}

\begin{rem} \rm
It is a very remarkable fact that the bundle $E_L$ only depends on $C$, $L$ and 
the first infinitesimal 
neighborhood $\C_2$ of $C$
in $S$.  In \cite{Voisin_W} Voisin proves that, even more generally, one may construct a bundle having all the properties of $E_L$
starting from $C$, $L$ and  an embedded ribbon 
$$
\C_2\subset \PP^g
$$ 
having $C$ as hyperplane section. She also observes that the datum of an embedded ribbon $\C_2\subset \PP^g$ having $C$ as hyperplane section, is equivalent to the datum of an element $u$
 in the kernel of the dual of the Gaussian map
$$
H^1(C, T_C^2)\to H^1(C, T_{\PP^{g-1}}\otimes T_C)
$$
Moreover she proves that if $R_L\in H^0(C, K_C+2L)$ is the ramification divisor of the map determined by $|L|$, the
class $uR_L\in H^1(C, T_C+2L)$  determines an extension
\be\label{voisin_seq}
0\to L\to E_L\to KL^{-1}\to 0
\ee
which splits at the level of cohomology so that
\be\label{inv2}
h^0(E_L)=h^0(L)+h^1(L)= h^1(E_L)
\ee
\vskip 0.3 cm
  From \ref{inv2} it follows that if  $L$  a general element of $W^1_{s+2}$ then $E_L$ is a rank-two vector bundle on $C$ with determinant equal to $K_C$ and with $s+2$ linearly independent sections. From the Brill-Noether point of view this is most unusual. Certainly  a general curve of genus $2s+1$ admits no such a vector bundle. The next section is devoted to the analysis  of those Brill-Noether loci that are relevant in our study of curves lying on K3 surfaces.
	\end{rem}

%%%%%%%%%%%%%%%%%%%%%%%%%%%%%%%%%%%%%%%%%%%%%%%%%%%%%%%%%%%%%%%%%%%%%%%%%%%%%%%%%%%

\section{Brill-Noether loci for moduli of vector bundles on $C$}\label{bn_vb}
\vskip 0.3cm

Let $(S,C)$ be as in the previous section.  We denote 
by 
$M_C(2, K_C)$ the moduli space of rank two, semistable vector bundles on $C$ with determinant equal to $K_C$. We consider the Brill-Noether locus
$$
M_C(2, K_C, s)= \{[F]\in M_C(2, K_C)\,\,\,|\,\,\, h^0(C, F)\geq s+2\}
$$

A point  $[F]\in M_C(2, K_C)$   corresponding to a stable bundle is smooth for  $M_C(2, K_C)$ and
$$
T_{[F]}(M_C(2, K_C))=H^0(S^2F)^\vee\cong\CC^{3g-3}\,,
$$
It is well known that the Zariski tangent space to the Brill-Noether locus $M_C(2, K_C, s)$ at a point $[F]$  can be expressed in terms
of the ``Petri'' map
\be\label{petri-map} \mu: S^2H^0(F)\longrightarrow H^0(S^2F)
\ee
Indeed
\be\label{tang-BN}
T_{[F]}(M_C(2, K_C, s))=\operatorname{Im}(\mu)^\perp
\ee
In particular $M_C(2, K_C, s)$ has expected dimension $3g-3-{s+3 \choose 2}$.

Also notice that, if $F$ is a rank two vector bundle on $C$ with determinant equal to $K_C$, since  $\chi(S^2F)=\chi(F\otimes F^\vee)-\chi(K_C)=3g-3$, we get:
\be\label{dim-stab}
F\quad\text{stable}\quad\Rightarrow\quad h^0(S^2F)=3g-3\quad\Rightarrow\quad h^1(S^2F)=0\,.
\ee
%The next proposition compares moduli of sheaves on  $S$ with moduli of vector bundles on $C$ in the rank 1 case.

\begin{prop}\label{s_to_c}  Assume that  we are in the Rank-1 case.
Let $v$ be the Mukai vector (\ref{mukai_v}).
Then there is a well defined morphism
\be\label{mod_s_to_c}
\aligned
\sigma: M_v(S)&\longrightarrow M_C(2, K_C,s)\\
&[\E]\mapsto [\E_{|C}]
\endaligned
\ee

 \end{prop}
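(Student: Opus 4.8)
The plan is to show that for a general element $[\E]\in M_v(S)$, the restriction $\E_{|C}$ lands in $M_C(2,K_C,s)$, i.e. that it is a semistable rank-two bundle on $C$ with determinant $K_C$ and at least $s+2$ sections, and then to argue that this assignment is a genuine morphism of moduli spaces rather than just a set-theoretic map. First I would verify the easy invariants. By Lemma \ref{L:nowalls1}, in the Rank-1 case every point of $M_v(S)$ is a locally free, $[C]$-stable sheaf $\E$ with $v(\E)=(2,[C],s)$, so $c_1(\E)=[C]$ and hence $\det(\E_{|C})=\mathcal{O}_C([C])=\mathcal{O}_C(C)=K_C$ by adjunction (since $K_S=\mathcal{O}_S$). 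For the sections, restrict the structure sequence $0\to\E(-C)\to\E\to\E_{|C}\to 0$ and use the cohomology of $\E$ on $S$: since $\E$ is a stable sheaf with $\chi(\E)=s+2$ and, being $\mu$-stable with $c_1(\E)=[C]$ primitive, one has $h^2(\E)=h^0(\E^\vee)^\vee=h^0(\E(-C))^\vee=0$ and $h^0(\E(-C))=0$, so $h^0(S,\E)=s+2$ and these sections inject into $H^0(C,\E_{|C})$, giving $h^0(C,\E_{|C})\ge s+2$.

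The substantive point is stability of the restriction $\E_{|C}$ as a bundle on $C$. Here I would try to transfer $\mu$-stability on $S$ to slope-stability on $C$. Concretely, suppose $\E_{|C}$ had a destabilizing line subbundle $N\hookrightarrow \E_{|C}$ with $\deg N\ge s$ (half of $\deg K_C=2s$). The idea is to lift such a subsheaf to a rank-one subsheaf of $\E$ on $S$ — for instance via the elementary modification determined by $N$, or by using that a section of $\E_{|C}\otimes N^{-1}$ extends or contributes a destabilizing subsheaf of $\E$ on $S$ — and derive a contradiction with the $\mu_C$-stability of $\E$ already guaranteed by Lemma \ref{L:nowalls1}. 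The comparison of slopes uses $\mu_C(\E)=2s$ from \eqref{slope1} and the fact that $\Pic(S)=\ZZ\cdot[C]$ forbids any rank-one subsheaf of $\E$ of positive relative slope. I expect this transfer argument to be the main obstacle: restriction of a stable bundle need not be stable in general (one normally invokes a Mehta–Ramanan/Flenner type restriction theorem, which requires the polarizing divisor to be sufficiently positive), so the proof must exploit the very special numerics of the K3 situation — namely $C^2=2g-2$ and the absence of walls from Lemma \ref{nowalls} — rather than a generic-restriction theorem.

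Finally, to upgrade the set-theoretic assignment $[\E]\mapsto[\E_{|C}]$ to a morphism, I would work with a (quasi-)universal sheaf. By Lemma \ref{L:nowalls1} the space $M_v(S)$ is a fine (or at least admits a quasi-universal) moduli space, so there is a universal bundle $\mathcal{F}$ on $S\times M_v(S)$; restricting to $C\times M_v(S)$ and invoking the universal property of $M_C(2,K_C)$ produces the classifying morphism $\sigma$ into $M_C(2,K_C)$, whose image lies in $M_C(2,K_C,s)$ by the section count above. The only care needed is that restriction to $C$ is flat over the base and preserves semistability fibrewise, which follows from the stability analysis of the previous paragraph applied uniformly; I would phrase this as: since stability is an open condition and holds at every closed point, the family $\mathcal{F}_{|C\times M_v(S)}$ is a family of stable bundles, hence defines the desired morphism.
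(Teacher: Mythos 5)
Your treatment of the determinant and of the section count is sound, and in fact slightly more direct than the paper's: from $h^0(\E(-C))=h^2(\E)=0$ (both forced by stability, since a nonzero section of $\E(-C)=\E^\vee$ would give a subsheaf $\mathcal{O}_S(C)\subset\E$ of slope $4s>2s=\mu_C(\E)$) you get $h^0(S,\E)\ge\chi(\E)=s+2$ together with an injection $H^0(S,\E)\hookrightarrow H^0(C,\E_{|C})$, hence $h^0(C,\E_{|C})\ge s+2$ for \emph{every} $[\E]\in M_v(S)$; the paper instead verifies the cohomology vanishings only at the Lazarsfeld--Mukai bundles $\wt E_L$ and then uses semicontinuity plus the irreducibility of $M_v(S)$ and the closedness of $M_C(2,K_C,s)$. (You should write $h^0(S,\E)\ge s+2$ rather than an equality, since $h^1(\E)=0$ is neither needed nor established; the inequality suffices.) The passage from the pointwise assignment to a morphism via a quasi-universal family is standard and unobjectionable.

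The genuine gap is exactly where you flag it: the stability of $\E_{|C}$. Naming the elementary modification is not a proof, and the mechanism is not ``lifting $N$ to a rank-one subsheaf of $\E$.'' What the paper actually does is this: a destabilizing sequence $0\to\alpha\to\E_{|C}\to K_C\alpha^{-1}\to 0$ with $d=\deg\alpha\ge 2s$ produces a rank-\emph{two} elementary modification $V=\ker(\E\to K_C\alpha^{-1})$ on $S$ with $c_1(V)=0$ and, crucially, $c_2(V)=s+2-d\le 2-s<0$. One computes $\chi(V)=\chi(\E(-C))+\chi(\alpha)\ge s+2$, whence $2h^0(V)\ge s+2$ (using $V\cong V^\vee$) and $h^0(V)\ge 4$, which rules out stability of $V$; the strictly semistable case is excluded because a Jordan--H\"older factor $\mathcal{L}$ with $c_1(\mathcal{L})\cdot C=0$ would have to satisfy $c_1(\mathcal{L})^2\ge -c_2(V)>0$, impossible when $\Pic(S)=\ZZ\cdot[C]$; and in the unstable case the destabilizing line subsheaf $\mathcal{L}\subset V\subset\E$ satisfies $c_1(\mathcal{L})\cdot C>0$ and $c_1(\mathcal{L})^2\ge -c_2(V)\ge s-2$, forcing $c_1(\mathcal{L})=hC$ with $h\ge 1$ and contradicting the stability of $\E$. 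The negativity of $c_2(V)$ and the resulting Bogomolov-type instability of $V$ are the whole point of the argument, and none of this is carried out in your proposal; as written, the central claim of the proposition is asserted rather than proved.
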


\begin{proof}  We first show that for every $[\E]\in M_v(S)$ the vector bundle $\E_{|C}$ is stable.
Suppose this is not the case.
Then there is an exact sequence
\be\label{non_st}
0\to \alpha\to \E_{|C}\to K\alpha^{-1}\to 0
\ee
with  $d=\deg\alpha \geq g-1=2s$.  From  (\ref{non_st}) we get a  diagram 

\be\label{diag_v}
\xymatrix{
&0\ar[d]&0\ar[d] \\
0\ar[r]&\E(-C)\ar[d]\ar@{=}[r]&\E(-C)\ar[d]\\
0\ar[r]&V\ar[d]\ar[r]&\E\ar[d]\ar[r]&K\alpha^{-1}\ar@{=}[d]\ar[r]&0\\
0\ar[r]&\alpha\ar[d]\ar[r]&\E_{|C}\ar[d]\ar[r]&K\alpha^{-1}\ar[r]&0\\
&0&0
}
\ee
The rank two locally free sheaf  $V$ satisfies
$$
c_1(V)=0\,,\qquad c_2(V)=c_2(\E)-c_1(\E)\cdot[C]+c_1(K\alpha^{-1})=s+2-d\leq 2-s<0
$$
In particular $V\cong V^\vee$, but  $V$ is not the trivial bundle. 
We have  
$$
\chi(V)=\chi(\E(-C))+\chi(\alpha)=s+2+d-g+1\geq s+2.
$$ 
Thus $2h^0(V)\geq s+2$ so that $h^0(V)\geq 4$. Therefore  $V$ cannot be stable, otherwise
$\dim\End(V)=h^0(V\otimes V^\vee)=h^0(V\otimes V)=1$.
Suppose  $V$ is strictly semistable,  then by Jordan-H\"older we have an exact sequence
$$
0\to \mathcal{L}\to V\to \mathcal{L}^{-1}\otimes I_X\to 0
$$
where  $X\subset S$ is a $0$-dimensional subscheme and $c_1(\mathcal{L})\cdot C=0$.
We also have
$$
\chi(V)=\chi(\mathcal{L})+ \chi(\mathcal{L}^{-1}\otimes I_X)
$$ 
so that
$$
0<-c_2(V)= c_1(\mathcal{L})^2 - h^0(\mathcal{O}_X) \le c_1(\mathcal{L})^2
$$
 Since   $c_1(\mathcal{L})\cdot C=0$ this contradicts the   fact that  $c_1(\mathcal{L})=hC$ for some integer $h$.
It follows that $V$ contains a destabilizing line subbundle $\mathcal{L}$. 
Therefore   $\mathcal{L}$ satisfies
\begin{equation}\label{destab1}
c_1(\mathcal{L})\cdot C > 0,  \quad c_1(\mathcal{L})^2 \ge -c_2(V) \ge s-2
\end{equation}
Then we must have $c_1(\mathcal{L})=hC$ for some positive integer $h$ and this contradicts the stability of $\E$.
We may then conclude that sending $\E \mapsto \E_{|C}$ gives a well defined    morphism $\sigma':M_v(S)\longrightarrow M_C(2,K_C)$.

  By Lemma \ref{stab_pic_z}, for every  pencil $L$ of degree $s+2$   we have 
$$
[\wt E_L]\in M_v(S)
$$
Since, by (\ref{inv1}), we have $h^0(S, \wt E_L(-C))=h^1(S,\wt E_L)=h^2(S, \wt E_L)=0$, the same relations must be true for a general point  $[\E]\in M_v(S)$. Looking then at the exact sequence
$$
0\to \E(-C)\to \E\to  \E_{|C}\to 0
$$
we easily see that for a general  point  $[\E]\in M_v(S)$ the point $[ \E_{|C}]$ belongs to $M_C(2, K_C,s)$
i.e. $h^0(C,\E_{|C})\geq s+2$. But $M_v(S)$ is a smooth K3 surface, in particular it is irreducible.
Thus the image of $\sigma'$ must be contained in $M_C(2, K_C,s)$.
\end{proof}

\begin{rem}\label{stab_voisin} \rm
The proof of   Proposition \ref{s_to_c} can be extended  to the Rank-2 case using the Hodge Index Theorem but for the last part one needs the existence of a base point free pencil $L$ of degree $s+2$ and the stability of $\wt E_L$. The existence of $L$ will be proved in Proposition \ref{petri_rank2} and the stability in Remark \ref{stab_voisin2}. In Section 
\ref{bn_vb2} we will take a different approach to  the study of $\sigma: M_v(S)\to M_C(2, K_C,s)$ in the Rank-2 case (Corollary \ref{rhork2}).

The stability of Lazarsfeld-Mukai bundles and Voisin's bundles has been studied by various authors.
In the Rank-1 case  the stability of $\wt E_L$ is almost immediate (Lemma \ref {stab_pic_z}) but it is also a consequence
of a more general result in \cite{Lelli-Chiesa}, Section 4. In the same case the stability of $\wt E_L$, together with Proposition \ref{s_to_c}, gives the stability of its restriction $E_L$. This particular result is also a consequence of Theorem 4.1 in \cite{Aprodu-Farkas-Ortega}.  The stability of both $\wt E_L$ and $ E_L$ cannot be deduced from the above mentioned papers.      
\end{rem}

Finally, we want to study the differential of the morphism $\sigma: M_v(S)\to M_C(2, K_C,s)$, whenever it is well defined.

\begin{prop}\label{L:rk1dsigma}Assume that the morphism
$$
\aligned
\sigma: M_v(S)&\to M_C(2, K_C,s)\\
&\E\mapsto \E_{|C}
\endaligned
$$
is well defined. Look at the composition:
\[
\xymatrix{
 \sigma': M_v(S) \ar[r]^-\sigma & M_C(2,K_C,s) \ar@{^(->}[r]^-j & M_C(2,K_C) }
\]
where $j$ is the inclusion.
Let $\mathcal{E}\in M_v(S)$ and consider the following conditions:
\begin{itemize}
	\item[(i)]    $H^1(S, S^2\mathcal{E})=0$.
	
	\item[(ii)] $H^0(S, S^2\mathcal{E}(-C))=0$.
	
	\item[(iii)]  $S^2H^0(S, \mathcal{E})\longrightarrow H^0(S, S^2\mathcal{E})$ is surjective.
	\end{itemize}
Then:	
\begin{enumerate}
\item If (i) is satisfied then  $d\sigma'$ is injective at $[\mathcal{E}]$.
	
	\item If (i),(ii) and (iii) are satisfied then $M_C(2,K_C,s)$ is nonsingular of dimension 2 at $\mathcal{E}_{|C}$ and $\sigma$ is \'etale at $\mathcal{E}$.
	\end{enumerate}
\end{prop}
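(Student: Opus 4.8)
The plan is to analyze the differential $d\sigma'$ at $[\mathcal{E}]$ by relating the tangent spaces of $M_v(S)$ and $M_C(2,K_C)$ through the restriction map, and then use the deformation-theoretic descriptions of each. First I would recall that the tangent space to $M_v(S)$ at $[\mathcal{E}]$ is $H^1(S, \mathcal{E}\otimes\mathcal{E}^\vee)$, which decomposes as $H^1(S,\mathcal{O}_S)\oplus H^1(S, \mathcal{E}nd_0(\mathcal{E}))$; since $S$ is a K3 surface the first summand vanishes, so the tangent space is identified with the traceless part $H^1(S, \mathcal{E}nd_0(\mathcal{E}))$. Because $\det\mathcal{E}=\mathcal{O}_S(C)$ is fixed, the relevant deformations are exactly those with fixed determinant, and $\mathcal{E}nd_0(\mathcal{E})\cong S^2\mathcal{E}\otimes(\det\mathcal{E})^{-1}=S^2\mathcal{E}(-C)$ after tensoring by the determinant (using $\mathcal{E}^\vee\cong\mathcal{E}(-C)$ for a rank-two bundle with $c_1=[C]$). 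Meanwhile the tangent space to $M_C(2,K_C)$ at $[\mathcal{E}_{|C}]$ is $H^1(C,\mathcal{E}nd_0(\mathcal{E}_{|C}))\cong H^0(C,S^2\mathcal{E}_{|C})^\vee$, as recorded in the excerpt.

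For step (1), the key is to compute the differential $d\sigma'$ as the map on $H^1$ induced by restricting endomorphisms from $S$ to $C$. I would write down the restriction exact sequence
\be
0\to S^2\mathcal{E}(-C)\to S^2\mathcal{E}\to S^2\mathcal{E}_{|C}\to 0
\ee
on $S$ (here $S^2\mathcal{E}$ restricted to $C$ gives $S^2\mathcal{E}_{|C}$, and the kernel is the twist by $\mathcal{O}_S(-C)$). Taking the long exact cohomology sequence and using Serre duality on $S$, the differential $d\sigma'$ gets identified with the connecting-type map $H^1(S,\mathcal{E}nd_0\mathcal{E})\to H^1(C,\mathcal{E}nd_0\mathcal{E}_{|C})$, whose kernel is controlled by $H^1(S, S^2\mathcal{E})$. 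Concretely, injectivity of $d\sigma'$ follows once the map $H^1(S,S^2\mathcal{E}(-C))\to H^1(S,S^2\mathcal{E})$ is injective, and the long exact sequence shows this is guaranteed by the vanishing of $H^1(S,S^2\mathcal{E})=0$, which is precisely hypothesis (i). This gives part (1).

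For step (2), I would argue that conditions (i), (ii), (iii) together force $\sigma$ to be étale. Hypotheses (i) and (ii) make the restriction map on sections $H^0(S,S^2\mathcal{E})\to H^0(C,S^2\mathcal{E}_{|C})$ an isomorphism: indeed from the same exact sequence, $H^0(S,S^2\mathcal{E}(-C))=0$ by (ii) gives injectivity, and surjectivity follows since the next term $H^1(S,S^2\mathcal{E}(-C))$ injects into $H^1(S,S^2\mathcal{E})=0$ by (i). Combined with the surjectivity of the Petri-type map $S^2H^0(S,\mathcal{E})\to H^0(S,S^2\mathcal{E})$ from (iii), and the analogous upper-semicontinuity description of $T_{[\mathcal{E}_{|C}]}M_C(2,K_C,s)=\operatorname{Im}(\mu)^\perp$ from \eqref{tang-BN}, I can show the Petri map $\mu$ on $C$ has image of the expected codimension, so that $M_C(2,K_C,s)$ is smooth of dimension $3g-3-\binom{s+3}{2}$ — but here one checks this expected dimension equals $2$ in the relevant range, giving smoothness of dimension $2$ at $\mathcal{E}_{|C}$. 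Since both $M_v(S)$ and $M_C(2,K_C,s)$ are then smooth surfaces and $d\sigma'$ is injective hence an isomorphism on tangent spaces, $\sigma$ is étale.

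The main obstacle I anticipate is step (2), specifically reconciling the three conditions to simultaneously control the Petri map on $C$ and verify that the expected dimension of the Brill-Noether locus genuinely equals $2$ at this point. The delicate point is translating the surjectivity condition (iii) on $S$, together with the isomorphism on $H^0$ provided by (i) and (ii), into a precise statement about the corank of $\mu$ on $C$ via the commutative diagram relating $S^2H^0(S,\mathcal{E})\to H^0(S,S^2\mathcal{E})$ to $S^2H^0(C,\mathcal{E}_{|C})\to H^0(C,S^2\mathcal{E}_{|C})$. One must carefully track how the restriction isomorphisms on both $H^0(\mathcal{E})$ and $H^0(S^2\mathcal{E})$ intertwine the two Petri maps, and confirm that the negativity of the naive expected dimension does not obstruct the argument — the whole point being that at these special bundles the locus is better-behaved than a generic Brill-Noether count would predict.
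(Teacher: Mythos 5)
Your part (1) follows essentially the paper's route: identify $T_{[\mathcal{E}]}M_v(S)$ with $H^1(S,S^2\mathcal{E}(-C))$ via $\mathcal{E}\otimes\mathcal{E}^\vee\cong S^2\mathcal{E}(-C)\oplus\mathcal{O}_S$, realize $d\sigma'$ as a restriction map, and kill its kernel, which by Serre duality is controlled by $H^1(S,S^2\mathcal{E})^\vee$, using (i). But you misstate the mechanism: what (i) buys is that the map $H^1(S,S^2\mathcal{E}(-C))\to H^1(S,S^2\mathcal{E})$ is \emph{zero} (its target vanishes), not that it is injective. It cannot be injective: its source is the tangent space to the K3 surface $M_v(S)$ and so is $2$-dimensional.

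The genuine gap is in part (2). You claim that (i) and (ii) make the restriction $d:H^0(S,S^2\mathcal{E})\to H^0(C,S^2\mathcal{E}_{|C})$ an isomorphism because ``$H^1(S,S^2\mathcal{E}(-C))$ injects into $H^1(S,S^2\mathcal{E})=0$.'' This is false for the same reason as above: $h^1(S,S^2\mathcal{E}(-C))=h^1(S,\mathcal{E}\otimes\mathcal{E}^\vee)=2$, so $d$ is injective with corank exactly $2$, not an isomorphism. Your version would be self-defeating: if $d$ were onto, then (iii) would force the Petri map $\mu$ on $C$ to be surjective, so $T_{[E]}M_C(2,K_C,s)=\operatorname{Im}(\mu)^\perp$ would vanish, contradicting the fact that the $2$-dimensional image of $M_v(S)$ passes through $[E]$. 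Relatedly, your assertion that the expected dimension $3g-3-\binom{s+3}{2}$ ``equals $2$ in the relevant range'' holds only for $s=5$; it is $0$ for $s=6$ and negative for $s\geq 7$, which is exactly the difficulty the paper emphasizes and must circumvent. The correct argument runs in the opposite direction: from the commutative square relating $S^2H^0(\mathcal{E})\to H^0(S^2\mathcal{E})$ to $S^2H^0(E)\to H^0(S^2E)$, conditions (ii) and (iii) give $\operatorname{corank}(\mu)\leq\operatorname{corank}(d)\leq h^1(S^2\mathcal{E}(-C))=2$, hence $\dim T_{[E]}M_C(2,K_C,s)\leq 2$; combining this upper bound with the lower bound $\geq 2$ supplied by part (1) yields smoothness of dimension $2$ and the \'etaleness of $\sigma$. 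You need the corank to be exactly $2$, not $0$.
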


\proof (1) -
Write $E=\E_{|C}$. We use the isomorphism 
\be\label{iso-sym}
\E^\vee\otimes\E=\E^\vee\otimes\E^\vee(C)\cong (S^2\E^\vee\oplus\wedge^2\E^\vee)(C)\cong S^2\E^\vee(C)\oplus\mathcal{O}_S
\ee
We have
$$
\aligned
T_{[\E]}(M_v(S))&=H^1(S, \E\otimes \E^\vee)=H^1(S, S^2\E^\vee(C)) \\ 
T_{[E]}(M_C(2,K_C))&=H^0(C,S^2E)^\vee=H^1(C, S^2E^\vee(K_C))
\endaligned
$$

Thus $d\sigma'$ is the restriction homomorphism
$$
d\sigma': H^1(S, S^2\E^\vee(C))\to H^1(C, S^2 E^\vee(K_C))
$$
Hence 
$$
\ker \sigma'=H^1(S,  S^2\E^\vee)=H^1(S,  S^2\E)
$$
(2) - Consider the following commutative diagram of maps:
\[
\xymatrix{
S^2H^0(E) \ar[r]^-a & H^0(S^2E) \\
S^2H^0(\mathcal{E}) \ar[u]^-b \ar[r]_-c & H^0(S^2 \mathcal{E}) \ar[u]_-d
\
}
\]
Condition (ii) implies that $d$ is injective and (iii) implies that (c) is surjective. Therefore 
\[
\mathrm{corank}(a) \le \mathrm{corank}(d) \le h^1(S^2\mathcal{E}(-C))
\]
Now consider the decomposition (\ref{iso-sym}).
Since $\mathcal{E}$ is stable we have $h^0(\mathcal{E}^\vee\otimes \mathcal{E})=1$ and $h^1(\mathcal{E}^\vee\otimes \mathcal{E})=2$ and from the decomposition  we deduce that 
$h^1(S^2\mathcal{E}(-C))=2$ as well, using hypothesis (ii).  Therefore  $\mathrm{corank}(a) \le 2$; since coker$(a)^\perp = T_{[E]}M_C(2,K_C,s)$, we conclude  that 
$\mathrm{dim}[T_{[E]}M_C(2,K_C,s)]\le 2$.   But from (i) and  (1) it follows that $M_C(2,K_C,s)$ has dimension $\ge 2$ at $[E]$, and this proves (2).  \qed

%%%%%%%%%%%%%%%%%%%%%%%%%%%%%%%%%%%%%%%%%%%%%%%%%%%%%%%%%%%%%%%%%%%%%%%%%

\section{Geometry of $(S,C)$ in the Rank-2 case}\label{notation}

As in the previous two sections we denote by $(S,C)$ a pair consisting  of a  K3 surface $S$ and a smooth curve $C\subset S$ of genus
$$
g(C)=2s+1\,,\qquad s\geq 5
$$
In this section  we    assume that we are in the Rank-2 case and we prove a few technical results.  

\begin{lem}\label{a-b}a) $h^0(S, \mathcal{O}( nA+mB))=0$, whenever $n<0$.

b) $h^i(S, \mathcal{O}( A-B))=h^i(S, \mathcal{O}(B-A))=0$, for all $i$.

c) Every element in $|A|$ is integral and has Clifford index $\ge 2$. In particular $|A|$ is very ample.
\end{lem}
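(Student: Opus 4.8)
The plan is to reduce everything to two elementary facts about the lattice $\Pic(S)=\ZZ A\oplus\ZZ B$, and then invoke two external theorems (Green--Lazarsfeld and Saint-Donat) only for the Clifford-index and very-ampleness assertions. By adjunction on $S$ (where $K_S=\mathcal{O}_S$) one has $A^2=2s-2$, $B^2=0$ and $C^2=4s$; expanding $C^2=(A+B)^2$ then gives $A\cdot B=s+1$. The single computation that drives the whole lemma is that the lattice $\ZZ A\oplus\ZZ B$ contains \emph{no class of square $-2$}: writing $D=xA+yB$, the equation $D^2=x\big((2s-2)x+2(s+1)y\big)=-2$ forces $x\mid 2$, and a quick inspection of $x\in\{\pm1,\pm2\}$ produces no integral $y$ for $s\ge 5$. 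Hence $S$ carries no $(-2)$-curve, so every irreducible curve $\Gamma$ has $\Gamma^2\ge 0$ and is nef; consequently every effective divisor is nef and has non-negative self-intersection. In particular $B$, being irreducible with $B^2=0$, is nef.

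For (a), if $nA+mB$ were effective with $n<0$ then, $B$ being nef, we would need $(nA+mB)\cdot B=n(s+1)\ge 0$, which is false; hence $h^0=0$ for every $m$. For (b), compute $(A-B)^2=(2s-2)-2(s+1)=-4$, so Riemann--Roch gives $\chi(\mathcal{O}_S(\pm(A-B)))=2+\tfrac12(-4)=0$. Since every effective divisor has non-negative square, the class $A-B$ (square $-4$) is not effective, and neither is $B-A$ (this also follows from (a)); thus $h^0(\mathcal{O}_S(A-B))=h^0(\mathcal{O}_S(B-A))=0$. By Serre duality these two vanishings give $h^2(\mathcal{O}_S(A-B))=h^0(\mathcal{O}_S(B-A))=0$ and $h^2(\mathcal{O}_S(B-A))=h^0(\mathcal{O}_S(A-B))=0$, and then $\chi=0$ forces $h^1=0$ for both classes, proving (b).

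For (c) I would first settle integrality. If some $D\in|A|$ split as $D=D_1+D_2$ with $D_i$ effective and nonzero, then writing $[D_i]=x_iA+y_iB$, part (a) forces $x_i\ge 0$, so $\{x_1,x_2\}=\{0,1\}$; the summand with $x_i=0$ has class $y_iB$ with $y_i\ge 1$, while the other has class $A-y_iB$ whose square $(2s-2)-2y_i(s+1)$ is negative, impossible for an effective divisor. Since $A$ is primitive, no member is a multiple of a curve either, so every $D\in|A|$ is integral. For the Clifford index I would invoke the Green--Lazarsfeld theorem: the Clifford index is constant along $|A|$, and were it $<\lfloor(g-1)/2\rfloor=\lfloor(s-1)/2\rfloor$ it would be computed by the restriction of a line bundle $M=xA+yB$ on $S$ with $h^0(M)\ge 2$ and $h^0(A-M)\ge 2$. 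But $h^0(M)\ge 2$ forces $x\ge 0$ and $h^0(A-M)\ge 2$ forces $x\le 1$; for $x=0$ one has $h^0(A-yB)\le h^0(A-B)=0$, and the case $x=1$ is the symmetric image $M\leftrightarrow A-M$. So no such $M$ exists, the Clifford index is maximal, equal to $\lfloor(s-1)/2\rfloor\ge 2$ for $s\ge 5$. Very ampleness then follows from Saint-Donat's numerical criterion: $A$ is nef with $A^2\ge 8$, $S$ has no $(-2)$-curve, $A$ is primitive (so not twice a genus-$2$ class), and every elliptic pencil $E=xA+yB$ (i.e. $E^2=0$, $E$ effective) satisfies $E\cdot A\ge s+1\ge 6$, in particular $E\cdot A\notin\{1,2\}$.

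The routine parts are (a), (b) and integrality, which rest only on the two lattice facts above. The main obstacle is the Clifford-index claim: bounding it genuinely needs the Green--Lazarsfeld description of Clifford-computing line bundles on a K3, after which one must check carefully that the two numerical constraints $h^0(M)\ge 2$ and $h^0(A-M)\ge 2$ cannot hold simultaneously. The very-ampleness step likewise rests on the external input of Saint-Donat's classification rather than on a direct computation, and care is needed to confirm that all of its excluded configurations are absent from our lattice.
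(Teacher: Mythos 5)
Your proof is correct, and in several places it takes a genuinely different route from the paper's. Your organizing principle is the single lattice computation that $\ZZ\cdot[A]\oplus\ZZ\cdot[B]$ contains no class of square $-2$, from which you deduce that every effective divisor on $S$ is nef with non-negative self-intersection; part (a), the non-effectivity of $A-B$ in (b), and the integrality statement in (c) then all follow from this one numerical criterion. The paper argues each point separately: (a) by restricting to $B$; (b) by supposing $A-B$ effective and splitting into a connected case (excluded via Saint-Donat's Lemma 2.2, which forces $h^1=0$ against the equality $h^0=h^1$ coming from $\chi=0$) and a disconnected case (excluded by a computation with $D_1\cdot D_2=0$); and integrality by intersecting a putative decomposition of $A$ with $B$. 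Your route avoids the connected/disconnected case analysis entirely. For the Clifford index you are also more uniform: instead of excluding the three explicit low-Clifford configurations (hyperelliptic, trigonal, and the plane quintic for $s=6$) as the paper does, you rule out every $M$ with $h^0(M)\ge 2$ and $h^0(A-M)\ge 2$, which yields the stronger conclusion that $\mathrm{Cliff}(A')$ equals the generic value $\lfloor (s-1)/2\rfloor$; and you make the very-ampleness step explicit via Saint-Donat's numerical criterion, which the paper leaves implicit. The one point you should flag more carefully is that members of $|A|$ need not be smooth, so the assertion that the Clifford index is constant on $|A|$ and, when non-generic, computed by a line bundle on $S$ requires the extension of the Green--Lazarsfeld theorem to integral (possibly singular) curves; the paper cites Ballico--Fontanari--Tasin for precisely this refinement.
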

\begin{proof}
 a) is immediate by restricting  to $B$.
 
Let us prove b). We have $(A-B)^2=-4$ and, by point a), we also have $h^2(S,\mathcal{O}(A-B))=0$. Hence
$h^1(S, \mathcal{O}(A-B)=h^0(S, \mathcal{O}(A-B))$.
Suppose  $h^0(S, \mathcal{O}(A-B))\neq0$.  Let  $D$ be an effective divisor linearly equivalent to   $A-B$.
 If  $D$ is connected, then by Lemma 2.2  in \cite{Saint-Donat},  we get $h^1(S, \mathcal{O}(D))=0$ and we are done.
Otherwise
$$
D=D_1+D_2\,,\qquad D_1\cdot D_2=0\,,\qquad h^0(S, \mathcal{O}(D_i))\geq 1\,,\qquad i=1,2
$$
$$
D_1=nA+mB\,,\qquad D_2=hA+kB\,,\qquad n+h=1\,,\quad m+k=-1
$$
Since $D_1$ and $D_2 $ are  effective, by a) 
we must have $n\geq 0$, $h\geq 0$.
Thus either $n=1$ and $ h=0$, or $n=0$ and $ h=1$.
In any event, we would get $D_1\cdot D_2\neq0$ which is absurd.

Let us prove the first part of c).
Write $A=\Gamma+\Delta$, with both $\Gamma$ and $\Delta$  effective.
Then $\Gamma=nA+mB$. Since $A\cdot B\geq\Gamma\cdot B=nA\cdot B$, we get $n\leq 1$.
Since $h^0(S, \mathcal{O}(\Gamma))\neq 0$, by a) we must have $n=0,1$. If $ n=0$ then $mB$ must be a subcurve
of $A$ against point b). If $n=1$, since $\Gamma$ is a sub curve of $A$, we must have $m<0$ which
 again violates b).

Let us now prove the second part of point c).
By the main theorem of \cite{Green-Lazarsfeld-special}, and by its refinement in \cite{Ballico-Fontanari-Tasin},  
all curves   $A'\in |A|$ have the same Clifford index and   Cliff$(A')$ is computed 
by the restriction to $A'$ of an invertible sheaf $L$ on $S$. So, in order to complete the proof of c) we must exclude the existence of $L \in \mathrm{Pic}(S)$
with either of the following properties:

\begin{itemize}
	\item[(i)] $L^2=0$ and $L\cdot A'=2$ (i.e. $A'$ is hyperelliptic).
	
	\item[(ii)] $L^2=0$ and $L\cdot A'=3$ (i.e. $A'$ is trigonal).
	
	\item[(iii)]  $s=6$ and $L^2=2$ and $L\cdot A = 5$ (i.e. $A'$ is isomorphic to a nonsingular plane quintic).
\end{itemize}

Let us consider (ii). We must have $L=nA+mB$ and the two conditions translate into
 $$
 (nA+mB)^2=2n^2(s-1)+2nm(s+1)=0\,,\qquad 2n(s-1)+m(s+1)=3\,,\qquad (s\geq 5)
 $$
which are clearly incompatible. The hyperelliptic case (i) is  similar.

In case (iii) we must have:
\[
(nA+mB)^2=2n(5n+7m)=2\,,\qquad 10n+7m=5
\]
implying the impossible identity $5n(n+1)=2$. 

\end{proof}

\begin{rem}\label{consq-a-b}\rm

A) From point c) of Lemma \ref{a-b} it follows that, via the  linear system $|A|$,  the surface $S$ is embedded in 
$\PP^s$ as a projectively normal surface whose ideal is generated by quadrics.

B) Let $I_S$ and $I_C$ be the ideal sheaves of $S$, respectively  $C$, in $\PP^s$. Recall that  $2A-C\sim A-B$ as divisors on $S$.
From point A)  and point b) of Lemma \ref{a-b} we deduce 
that $H^0(S, \mathcal{O}(2A))\cong  H^0(C, 2\eta)$ and in particular we get 
a surjection

\be\label{proj-norm-eta}
m: S^2H^0(C, \eta)\to H^0(C, 2\eta)\to 0
\ee

and an equality
\be\label{proj-norm-eta1}
H^0(S, I_S(2))=H^0(C, I_C(2))=\ker (m)
\ee

\end{rem}

\begin{lem}\label{geq-3}Let $D\subset S$ be a finite closed subscheme of length $d\ge 1$.
Assume that
\begin{equation}\label{E:ineq1}
h^0(S, \mathcal{I}_D(A))\geq \mathrm{max} \left\{3,s- \frac{d-1}{2} \right\}
\end{equation}
 Then $d=1$.
\end{lem}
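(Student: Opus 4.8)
The plan is to reinterpret the cohomological hypothesis geometrically, through the embedding $S\hookrightarrow\PP^s$ defined by $|A|$ (Remark \ref{consq-a-b}), and then contradict the Clifford index bound of part c) of Lemma \ref{a-b}. The first step is to translate the hypothesis into a statement about the projective span of $D$. Since $|A|$ is very ample and $S$ is linearly normal in $\PP^s$, with $h^0(S,\mathcal{O}_S(A))=s+1$, the sections of $\mathcal{O}_S(A)$ vanishing on $D$ are exactly the hyperplanes containing the linear span $\langle D\rangle\subset\PP^s$. Hence, writing $r=\dim\langle D\rangle$, one has $h^0(S,\mathcal{I}_D(A))=s-r$, and under this dictionary the two hypotheses read $r\le s-3$ and $d\ge 2r+1$, while the desired conclusion $d=1$ becomes $r=0$. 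If $d\ge 2$ then, since $|A|$ separates length-two subschemes, $D$ contains a length-two subscheme spanning a line, so $r\ge 1$; it therefore suffices to rule out the range $1\le r\le s-3$, $d\ge 2r+1$.

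Next I would pass to a curve section. The key point is that $D$ lies on a smooth $A'\in|A|$; since $\mathcal{O}_S(A)|_{A'}=K_{A'}$ by adjunction, such an $A'$ is a genus-$s$ canonical curve spanning a hyperplane $\PP^{s-1}\supset\langle D\rangle$, with $\cliff(A')\ge 2$ by part c) of Lemma \ref{a-b}. On the smooth curve $A'$ the subscheme $D$ is an effective divisor of degree $d$, and geometric Riemann--Roch in the canonical embedding gives $\dim\langle D\rangle=d-h^0(A',\mathcal{O}_{A'}(D))$, so that $h^0(A',\mathcal{O}_{A'}(D))=d-r$ and, by Riemann--Roch on $A'$, $h^1(A',\mathcal{O}_{A'}(D))=s-1-r$.

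The contradiction is then immediate. From $d\ge 2r+1$ and $r\ge 1$ we get $h^0(A',\mathcal{O}_{A'}(D))=d-r\ge r+1\ge 2$, while $r\le s-3$ gives $h^1(A',\mathcal{O}_{A'}(D))=s-1-r\ge 2$; hence $\mathcal{O}_{A'}(D)$ contributes to the Clifford index, and
\[
\cliff(\mathcal{O}_{A'}(D))=d-2\bigl(h^0(A',\mathcal{O}_{A'}(D))-1\bigr)=2r+2-d\le 1,
\]
contradicting $\cliff(A')\ge 2$. Therefore $d=1$. Note that the bound $s-\tfrac{d-1}{2}$ in the hypothesis is calibrated precisely so that this last computation yields a value $\le 1$.

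The step I expect to be the main obstacle is producing a smooth member of $|A|$ through $D$ on which $D$ is an honest divisor, since $D$ need not be reduced or curvilinear. The base locus of the system $|\mathcal{I}_D(A)|$ of curves through $D$ is exactly $S\cap\langle D\rangle$, and every member is integral by part c) of Lemma \ref{a-b}, so a general member is smooth away from this base locus; the only difficulty is a point $p\in\supp(D)$ with $T_pS\subseteq\langle D\rangle$, where every member acquires a singularity at $p$. I would handle this either by a general-position argument forcing $S\cap\langle D\rangle$ to contain a curve violating the lattice \eqref{pic-2}, or by reducing to the case $r=1$, where $D$ spans a trisecant line that must lie on $S$ because $S$ is cut out by quadrics (Remark \ref{consq-a-b}), again contradicting $\Pic(S)=\ZZ[A]\oplus\ZZ[B]$.
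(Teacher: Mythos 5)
Your core computation is the same as the paper's: restrict to a member of $|A|$ through $D$, observe that the hypothesis forces $h^0(\omega(-D))\ge 2$, apply Riemann--Roch to get $h^0(\mathcal{O}(D))\ge\frac{d+1}{2}$, and derive $\mathrm{Cliff}(D)\le 1$, contradicting part c) of Lemma \ref{a-b}. (Your span/geometric-Riemann--Roch dictionary is just a reformulation of this.) The genuine gap is exactly the one you flag in your last paragraph, and neither of your proposed repairs closes it. You need a \emph{smooth} member of $|A|$ through $D$, but a general member of the sub-system $|\mathcal{I}_D(A)|$ is guaranteed smooth only away from its base locus, and it is forced to be singular at any base point $p$ with $T_pS\subseteq\langle D\rangle$ --- a situation that cannot be excluded (e.g.\ when $D$ contains a non-curvilinear subscheme at $p$). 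Your fix (a) fails because $T_pS\subseteq\langle D\rangle$ does not make $S\cap\langle D\rangle$ positive-dimensional, so no curve violating the lattice \eqref{pic-2} is produced; your fix (b) is not a reduction at all, since the obstruction lives at $r\ge 2$ (for $r=1$ the line $\langle D\rangle$ cannot contain a tangent plane, so that case is fine, but nothing brings the general case down to it).

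The paper avoids the problem by never asking for smoothness: it takes an \emph{arbitrary} hyperplane section $A$ through $D$, which is integral by Lemma \ref{a-b} c), regards $D$ as defining a rank-one torsion-free sheaf on that integral curve, and runs the identical Riemann--Roch and Clifford-index computation there. This is legitimate because the Clifford bound in Lemma \ref{a-b} c) is established (via the constancy results of Green--Lazarsfeld and Ballico--Fontanari--Tasin) for \emph{all} members of $|A|$, singular ones included, with the Clifford index computed on rank-one torsion-free sheaves. If you replace ``smooth $A'\in|A|$'' by ``integral $A\in|A|$'' and your line bundles by torsion-free sheaves, your argument goes through verbatim and the obstacle disappears.
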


\begin{proof}
 We   view $S$ embedded in $\PP^s=\PP H^0(S,\mathcal{O}(A))^\vee$. Consider  a   hyperplane $H$ passing through $D$, i.e defining a non-zero element of $H^0(S, \mathcal{I}_D(A))$. 
We set $A=H\cap S$. We may  view $D$ as a subscheme of the integral curve $A$. As such it defines a rank-one torsion free sheaf on $A$ which we still  denote by $D$.
From   (\ref{E:ineq1})  we get 
\be\label{contr_om(-D)}
 h^0(A, \omega_A(-D)) \geq 2
\ee
Thus, by Riemann-Roch on $A$:
 \be\label{contr_D}
 h^0(A, \mathcal{O}_A(D))=h^0(A, \omega_A(-D))+d-s+1\geq \frac{d+1}{2}
 \ee
 Therefore either $h^0(A, \mathcal{O}_A(D)) =1$ and $d+1\leq 2$, implying that  $d\leq 1$, which is precisely what we aim at, or $ h^0(A, \mathcal{O}_A( D))\geq 2$, which, together
 with (\ref{contr_om(-D)}) tells us that $D$ contributes to the Clifford index of $A$.
 Let us see that this case can not occur.
By (\ref{contr_D}) we get 
 \be\label{cliff_D}
 \aligned
 \cliff{D}&=d-2h^0(A, \mathcal{O}_A(D))+2\\
 &\leq d-2\left(\frac{d+1}{2}\right)+2\leq 1\\
 \endaligned
 \ee
 and this implies that Cliff$(A) \le 1$, contradicting Lemma  \ref{a-b} c). \end{proof}

 \begin{rem}\rm In a sense, the technical lemma we just proved is our substitute for  Mukai's Lemma 7 which is ubiquitous in \cite{Mukai2}.

It suggests the possibility of introducing the notion of \emph{Clifford index of a $0$-dimensional closed subscheme} $D$ on a polarized K3 surface $(S,H)$ by letting
\[
\mathrm{Cliff}(D) = 2g-d - 2 h^0(S,\mathcal{I}_D(H))+2
\]
where $d = \mathrm{length}(D)$ and $g = \frac{1}{2}H^2+1$ is the genus of $H$. One says that $D$ \emph{contributes to the Clifford index of $H$} if 
both $h^0(S,\mathcal{I}_D(H)) \ge 3$ and $h^0(S,\mathcal{I}_D(H)) +d-g+1 \ge3$.  
A straightforward generalization of the proof of Lemma \ref{geq-3} gives that, if all $H' \in |H|$ are integral, then
\[
 \mathrm{Cliff}(H) = \mathrm{min}\{\mathrm{Cliff}(D): D \subset S \ \text{contributes to $\mathrm{Cliff}(H)$}\}
\]

\end{rem}

Our next aim is to prove the following Proposition regarding   linear series of degree $s+2$ on $C$ in the  Rank-2 case.

\begin{prop}\label{petri_rank2} 

 a) Let $|L|$ be a degree-$(s+2)$,  base-point-free pencil on $C\subset S$.
Then $L$ is Petri.

b) $h^0(C, \eta\xi^{-1})=1$ and thus $\xi$ is a smooth isolated point of  $W^1_{s+1}$.

c) There exists on $C$ a base-point-free $g^1_{s+2}$.
\end{prop}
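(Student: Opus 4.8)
The plan is to prove the three assertions in the order (b), (a), (c): part (b) is purely cohomological and its tangent-space computation is reused in (c), while (a) is logically independent.

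\textbf{(b).} Since $\eta\xi^{-1}=\mathcal{O}_C(A-B)$ and $A-B-C=-2B$, restriction to $C$ gives
\[
0\to\mathcal{O}_S(-2B)\to\mathcal{O}_S(A-B)\to\eta\xi^{-1}\to0 .
\]
By Lemma \ref{a-b}(b), $h^0(\mathcal{O}_S(A-B))=h^1(\mathcal{O}_S(A-B))=0$, so the long exact sequence identifies $h^0(C,\eta\xi^{-1})$ with $h^1(S,\mathcal{O}_S(-2B))=h^1(S,\mathcal{O}_S(2B))$ (Serre duality); the latter equals $1$ because $|B|$ is an elliptic pencil, so $h^0(\mathcal{O}_S(2B))=3$ while $\chi(\mathcal{O}_S(2B))=2$. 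Restricting $\mathcal{O}_S(B)$ shows $\xi$ is a base-point-free pencil with $h^0(\xi)=2$, and as $K_C-\xi=\eta$ the base-point-free pencil trick gives $\ker\mu_0(\xi)=H^0(K_C-2\xi)=H^0(\eta\xi^{-1})$ for the Petri map $\mu_0(\xi)\colon H^0(\xi)\otimes H^0(\eta)\to H^0(K_C)$. Hence $\operatorname{rk}\mu_0(\xi)=2(s+1)-1=g$, so $\mu_0(\xi)$ is surjective and $T_\xi W^1_{s+1}=(\operatorname{Im}\mu_0)^\perp=0$: thus $\xi$ is a reduced isolated point.

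\textbf{(a).} The same trick gives, for a base-point-free pencil $L$ of degree $s+2$, $\ker\mu_0(L)=H^0(C,K_C-2L)$; since $\deg(K_C-2L)=2s-4>0$, the Petri condition is exactly the vanishing $H^0(C,K_C-2L)=0$. I would prove it through the Lazarsfeld--Mukai bundle $\wt E_L$ ($c_1=[C]$, $c_2=s+2$, $h^0=s+2$): a nonzero section of $K_C-2L$ makes $\wt E_L$ non-simple, hence non-stable (a stable bundle is simple), so there is a saturated sub-line-bundle $\mathcal{O}_S(N)\hookrightarrow\wt E_L$ with $N\cdot C\ge C^2/2=2s$ and, from $c_2(\wt E_L)=N\cdot(C-N)+\operatorname{length}(Z)=s+2$, with $N^2\ge N\cdot C-(s+2)\ge s-2$. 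Writing $N=aA+bB$ and using Lemma \ref{a-b}, these two inequalities leave only $N=A$. But from the defining sequence $0\to H^0(L)^\vee\otimes\mathcal{O}_S\to\wt E_L\to K_CL^{-1}\to0$ (last term supported on $C$), twisting by $-A$ yields $H^0(\wt E_L(-A))\hookrightarrow H^0(C,\xi L^{-1})=0$, as $\deg\xi L^{-1}=-1$; so $\mathcal{O}_S(A)$ cannot inject, a contradiction. Assembling this finite lattice analysis, with $N=A$ as the critical case, is the main obstacle: the Chern-class inequalities alone permit $N=A$ (with quotient $\mathcal{I}_z(B)$), and only the section count coming from $L$ excludes it. A Clifford-index estimate is genuinely insufficient here, since the borderline case $h^0(K_C-2L)=1$ survives it.

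\textbf{(c).} The Brill--Noether number $\rho(1,s+2)=1\ge0$ forces $W^1_{s+2}\ne\emptyset$ with all components of dimension $\ge1$. Any $g^1_{s+1}$ is base-point-free (a base point would yield a $g^1_s$ of Clifford index $s-2<s-1=\cliff(C)$), computes $\cliff(C)$, and so is cut out by some $N=aA+bB$ with $N\cdot C=s+1$; the lattice forces $N=B$, whence $W^1_{s+1}=\{\xi\}$ and the non-base-point-free locus of $W^1_{s+2}$ is the single translate $\xi+C$, of minimal class $\theta^{2s}/(2s)!$. If some component of $W^1_{s+2}$ has dimension $\ge2$, it meets the base-point-free locus and we are done. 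Otherwise $W^1_{s+2}$ is pure of dimension $1$ and the Kempf--Kleiman--Laksov formula gives
\[
[W^1_{s+2}]=\frac{1}{s!\,(s+1)!}\,\theta^{2s}=\Big(\tfrac{1}{s+1}\tbinom{2s}{s}\Big)\frac{\theta^{2s}}{(2s)!}.
\]
A computation as in (b) shows $\mu_0(\xi+p)$ is injective for general $p$, so $W^1_{s+2}$ is reduced of dimension $1$ along $\xi+C$, which therefore enters with multiplicity one. Since the Catalan number $\tfrac{1}{s+1}\tbinom{2s}{s}\ge2$, the class of $W^1_{s+2}$ strictly exceeds $[\xi+C]$, forcing a further component whose general point is a base-point-free $g^1_{s+2}$.
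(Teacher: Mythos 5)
Your part (b) is essentially the paper's argument (the paper gets $h^0(\eta\xi^{-1})\ge 1$ from the Petri map of $\xi$ and $\le 1$ from the same restriction sequence; your direct identification with $h^1(S,\mathcal{O}_S(2B))=1$ is fine). The problems are in (a) and (c). In (a), after reducing --- correctly, by the contrapositive of Lazarsfeld's ``$\wt E_L$ simple $\Rightarrow$ $L$ Petri'' --- to excluding non-simplicity of $\wt E_L$, you pass to non-stability and claim that the two inequalities $N\cdot C\ge 2s$ and $N^2\ge N\cdot C-(s+2)$ ``leave only $N=A$''. That is false: $N=2A$ gives $N\cdot C=6s-2$ and $\ell(Z)=s+2-2A\cdot(B-A)=3s-4\ge 0$, and $N=A+B=C$ gives $\ell(Z)=s+2$, so both survive your numerical sieve, and each (together with infinitely many further candidates $aA+bB$) then needs its own vanishing $H^0(\wt E_L(-N))=0$. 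The paper avoids this entirely by invoking the Green--Lazarsfeld/Donagi--Morrison lemma (Lemma \ref{gl-dm}): non-simplicity produces a sequence $0\to M\to\wt E_L\to N\otimes I_X\to 0$ in which \emph{both} $M$ and $N$ have at least two sections, which together with Lemma \ref{a-b} instantly forces $\{M,N\}=\{A,B\}$, hence $\wt E_L\cong\mathcal{O}(A)\oplus\mathcal{O}(B)$, contradicting $h^0(\wt E_L)=s+2$. Your route could probably be completed, but not with the two inequalities you state.

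In (c) your enumerative argument (comparing $[W^1_{s+2}]=\theta^{2s}/(s!\,(s+1)!)$ with the minimal class of $\xi+C$ and using that the Catalan number is $\ge 2$) is an attractive alternative to the paper's local analysis, but it rests on the unproved assertion $W^1_{s+1}=\{\xi\}$. You derive it from ``any $g^1_{s+1}$ computes $\cliff(C)$ and so is cut out by some $N\in\Pic(S)$'': Green--Lazarsfeld only says that the Clifford index is computed by \emph{some} restricted line bundle, not that \emph{every} pencil computing it is restricted --- the latter is the (much harder) Donagi--Morrison statement, which you would have to cite or prove (it does follow here, e.g.\ from non-simplicity of $\wt E_{\xi'}$, whose Mukai vector has square $-4$, plus Lemma \ref{gl-dm}, but you have not done this). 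The dimension-$\ge 2$ case is also dismissed too quickly, since a component avoiding the base-point-free locus lies in $W^1_{s+1}+C$, which is only $1$-dimensional once you already know $W^1_{s+1}$ is finite. The paper sidesteps all of this with a purely local argument: by (b) the tangent space $T_{\xi(p)}W^1_{s+2}$ jumps to dimension $2$ exactly for $p\in\supp D$, $D\in|\eta\xi^{-1}|$, and since a determinantal locus of the expected dimension has no embedded points, a second one-dimensional component through $\xi(p)$ must exist and cannot again be a translate $\xi'+C$.
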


We are going to use the following Lemma due to Green-Lazarsfeld  and Donagi-Morrison.
We take its statement from \cite{Ciliberto-Pareschi}, Lemma 2.1.

\begin{lem}\label{gl-dm}(Green-Lazarsfeld and Donagi-Morrison)
Let $S$ be a K3 surface. Let $|L|$ be a base-point-free pencil on a smooth curve $C$ lying on $S$.
If the  Lazarsfeld-Mukai  bundle $\wt E_L$ is not simple (i.e. it has non-trivial automorphisms), then there exists line bundles  $M$ and  $N$ on $S$
and a zero-dimensional subscheme  $X\subset S$ such that

i) $h^0(S,M)\geq 2$, $h^0(S, N)\geq 2$.

ii) $N$ is base-point-free.

iii) There is an exact sequence
\be\label{succ_mn}
0\to M\to \wt E_L\to N\otimes I_X\to 0
\ee
Moreover if  $h^0(S, M-N)=0$ then  $\supp(X)=\emptyset$  and the above sequence splits.
\end{lem}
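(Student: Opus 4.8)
The plan is to adapt the classical Green--Lazarsfeld/Donagi--Morrison argument, the two inputs being that $\wt E_L$ is globally generated---which holds because $|L|$ is base-point-free, by the standard surjectivity of the dual evaluation for Lazarsfeld--Mukai bundles---and that $H^0(\mathcal{O}_S)=\CC$ since $S$ is a K3 surface. First I would manufacture the sequence (iii) from a single non-scalar endomorphism. Assume $\wt E_L$ is not simple and pick $\phi\in\End(\wt E_L)$ which is not a homothety. As $S$ is connected, $\mathrm{tr}\,\phi$ and $\det\phi$ are scalars, so the characteristic polynomial of $\phi$ has constant coefficients; fix a root $\lambda\in\CC$ and put $\psi=\phi-\lambda\cdot\mathrm{id}$. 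Then $\det\psi=0$ while $\psi\neq0$, so $\psi$ has generic rank one; its image $\operatorname{im}\psi\subset\wt E_L$ is torsion-free of rank one, whence $\ker\psi$ is saturated and is therefore a sub-line-bundle $M$. Writing $N:=(\operatorname{im}\psi)^{\vee\vee}$ and letting $X\subset S$ be the (zero-dimensional) cosupport of $\operatorname{im}\psi$ in $N$, the first isomorphism theorem gives
\[
0\to M\to \wt E_L\to N\otimes I_X\to 0,
\]
which is (iii).

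For (i) and (ii) I would exploit global generation. Being the image under $\psi$ of the globally generated bundle $\wt E_L$, the sheaf $N\otimes I_X$ is globally generated. If it had only a one-dimensional space of sections it would be isomorphic to $\mathcal{O}_S$, forcing $N=\mathcal{O}_S$, $X=\emptyset$, $M=\mathcal{O}_S(C)$ and hence $c_2(\wt E_L)=0$, contradicting $c_2(\wt E_L)=\deg L=s+2>0$ from (\ref{inv1}); thus $h^0(N)\geq h^0(N\otimes I_X)\geq2$. Global generation of $N\otimes I_X$ also shows $|N|$ has no fixed component (a fixed curve would lie in the zero locus of every section, contradicting generation at its generic point), so $N^2\geq0$ and $|N|$ is base-point-free by Saint-Donat \cite{Saint-Donat}; this is (ii). For the bound on $M$, let $\psi'$ be the complementary operator $\phi-\mu$ (with $\mu$ the second eigenvalue, $\psi'=\psi$ when the eigenvalue is double). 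By Cayley--Hamilton $\psi\psi'=0$, so $\operatorname{im}\psi'\subseteq\ker\psi=M$; running the previous argument for $\psi'$ gives $h^0(\operatorname{im}\psi')\geq2$, and therefore $h^0(M)\geq2$, completing (i).

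For the final assertion I would separate the two eigenvalue-configurations. If $\phi$ has a single (double) eigenvalue then $\psi^2=0$, so $\operatorname{im}\psi=N\otimes I_X$ is contained in $\ker\psi=M$; the resulting nonzero map $N\otimes I_X\to M$ is a nonzero element of $\Hom(N\otimes I_X,M)=\Hom(N,M)=H^0(S,M-N)$ (a map into the locally free $M$ extends across the finite set $X$), so the hypothesis $h^0(S,M-N)=0$ rules this case out and forces two distinct eigenvalues $\lambda\neq\mu$. In that case a direct computation with Cayley--Hamilton shows the operator $p=(\phi-\mu)/(\lambda-\mu)$ is idempotent, has image inside $M$, and restricts to the identity on $M$ (the induced endomorphism of the line bundle $M$ is a constant, equal to $1$ generically, hence everywhere). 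Thus $p$ is a projection splitting $M\hookrightarrow\wt E_L$, giving $\wt E_L\cong M\oplus\ker p$; comparing with (iii), the quotient $N\otimes I_X\cong\ker p$ is locally free, so $X=\emptyset$ and the sequence splits.

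The routine part is the eigenvalue construction and the numerical exclusions; the genuinely delicate steps are the two places where the K3-geometry enters. The first is deducing base-point-freeness of $N$ from global generation of $N\otimes I_X$: an isolated base point of $|N|$ is a priori compatible with generation of the twisted sheaf, and it is precisely Saint-Donat's theorem (no fixed component and $N^2\geq0$ imply base-point-free on a K3) that removes this possibility. The second is checking that the idempotent $p$ really splits the sequence (iii) and annihilates $X$ in the diagonalizable case, which is where the hypothesis $h^0(S,M-N)=0$ is used to exclude the nilpotent alternative.
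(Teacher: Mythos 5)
The paper does not prove this lemma at all: it is imported verbatim from the literature (the text says ``We take its statement from \cite{Ciliberto-Pareschi}, Lemma 2.1''), the result being due to Green--Lazarsfeld and Donagi--Morrison. So there is no in-paper proof to match; what you have written is a reconstruction of the classical argument, and it is indeed the standard one --- constancy of $\operatorname{tr}\phi$ and $\det\phi$ on a K3, passage to $\psi=\phi-\lambda$ with $\det\psi=0$, the saturated kernel line bundle $M$ and torsion-free quotient $N\otimes I_X$, Cayley--Hamilton ($\psi\psi'=0$) to bound $h^0(M)$ from below and to split off the idempotent $p=(\phi-\mu)/(\lambda-\mu)$ in the diagonalizable case, with the hypothesis $h^0(S,M-N)=0$ used exactly to kill the nilpotent alternative via $\operatorname{im}\psi\subseteq\ker\psi$ and $\Hom(N\otimes I_X,M)=H^0(S,M-N)$. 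All of that is correct, including the $c_2$ contradiction ruling out $N\cong\mathcal{O}_S$ and the appeal to Saint-Donat to pass from ``no fixed component'' to base-point-freeness.

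The one inaccuracy is the opening claim that $\wt E_L$ is globally generated. It is not, in general: dualizing the defining sequence gives $0\to H^0(L)^\vee\otimes\mathcal{O}_S\to \wt E_L\to K_CL^{-1}\to 0$, and since the trivial part spans only a one-dimensional subspace of each fibre along $C$, the bundle $\wt E_L$ is generated by its global sections only away from the base locus of $|K_CL^{-1}|$ (and, if $h^1(L)=h^0(K_CL^{-1})=0$, only away from all of $C$). Your argument is easily repaired when $h^1(L)\neq 0$: the non-generated locus is then finite, so $N\otimes I_X$ and $\operatorname{im}\psi'$ are generated off a finite set, which still forces a putative single section of $N$ to vanish on at most a finite set, hence $N\cong\mathcal{O}_S$, $X=\emptyset$, $M\cong\mathcal{O}_S(C)$ and the same $c_2(\wt E_L)=\deg L>0$ contradiction; every later step goes through verbatim with ``generically globally generated'' in place of ``globally generated.'' In the paper's application this caveat is vacuous, since there $L$ is a $g^1_{s+2}$ on a curve of genus $2s+1$, so $h^1(L)=s\geq 5$; but as you stated it, for an arbitrary base-point-free pencil, the global-generation step needs this qualification (and the case $h^1(L)=0$ would need separate treatment, as the non-generated locus is then a divisor).
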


\begin{proof} {\it (of Proposition \ref{petri_rank2})} As far as point a) is concerned we proceed exactly as in
Lazarsfeld's proof of Petri's conjecture. It is then enough to prove that 
$\wt E_L$ is simple. Suppose it is not so. By the preceding Lemma there is an exact sequence (\ref{succ_mn}).
Since $\Pic(S)=\ZZ\cdot [A]\oplus\ZZ\cdot[B]$, $C=A+B$ and  $c_1(\wt E_{L})=C$ we must have

$$
M=mA+nB\,,\qquad N=hA+kB\,,\qquad (m+h-1)A+(n+k-1)B\sim 0
$$
from condition  i) of the Lemma and from Lemma \ref{a-b} we get that the pair  $\{M, N\}$ coincides with the pair $\{A, B\}$.
On the other hand, by Lemma \ref{a-b},  we have 
$H^0(A-B)=0$. Using Lemma \ref{gl-dm} again, the sequence \ref{succ_mn} splits and   $\wt E_L=\mathcal{O}(A)\oplus\mathcal{O}(B)$.
This is absurd since $h^0(S, \wt E_L)=s+2$.
\vskip 0.2 cm
Regarding item b),  looking at the Petri map $H^0(C,\xi)\otimes H^0(C, \eta)\to H^0(C, \omega_C)$, 
and using the b.p.f.p.t
we see that
$h^0(C,\eta\xi^{-1})\geq 1$. On the other hand, looking
at the exact sequence
$$
0\to\mathcal{O}_S(-2B)\to\mathcal{O}_S(A-B)\to\eta\xi^{-1}\to 0
$$
we see that $h^0(C,\eta\xi^{-1})\leq 1$.
\vskip 0.2 cm

As far as point c) is concerned, consider the smooth locus
$$
V=\{\xi(p)\,|\,p\in C\}\subset W^1_{s+2}
$$
To analyze $W^1_{s+2}$ along $V$ 
we look at
 the Petri map.
$$
\mu_{0, \xi(p)}: H^0(C,\xi(p))\otimes H^0(C,\eta(-p))\to H^0(C,  K_C)
$$
By hypothesis, 
$H^0(C,\xi(p))=H^0(C,\xi)$ and 
$$
\ker\mu_{0, \xi(p)}=H^0(C, \eta\xi^{-1}(-p))
$$
By b), we know that $h^0(C, \eta\xi^{-1})=1$. Let $D\neq0$ be the divisor of a non-zero section of  $\eta\xi^{-1}$.
By Brill-Noether theory it  follows that $V$ is a one-dimensional component of $W^1_{s+2}$ and that
$\xi(p)$ is a singular point of  $W^1_{s+2}$ if and only if
$p\in \supp D$.  Moreover, in this case
$$
\dim T_{\xi(p)}W^1_{s+2}=2
$$
A priori it could be that, for $p\in \supp D$, the dimension of $W^1_{s+2}$ at $\xi(p)$ is equal to $1$ and $\xi(p)$
is an embedded point. However,  for a determinantal variety of the correct dimension this can not be the case.
Thus there must be 
 a one-dimensional component $V'$of $W^1_{s+2}$, distinct from $V$, and  meeting $V$ at $\xi(p)$.
Now suppose that there is no base-point-free $g^1_{s+2}$ in $V'$. Then
$$
V'=\{\xi'(p')\,|\,p'\in C\}
$$
for some fixed
 $\xi'\in W^1_{s+1}$, with $ \xi'\neq\xi$. Therefore there is a point $p'\in C$, such that
$$
\xi(p)=\xi'(p')
$$
 But then   $\xi=\xi'$, contrary to the assumption.

\end{proof}

Proposition \ref{petri_rank2}, together with the results in \cite{Arbarello-Cornalba}, gives the following proposition.

\begin{prop}Set $g=2s+1$. 
Let $M_{g,s+1}^1\subset M_g$ be the irreducible divisor of curves possessing a $g^1_{s+1}$.
Le $[C]$ be a general point in $M_{g,s+1}^1$. Then there is a unique $g^1_{s+1}$ on $C$ and every  degree-$(s+2)$ base-point-free pencil on $C$ satisfies Petri's condition.
\end{prop}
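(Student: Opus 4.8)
The plan is to propagate the two stated properties from the Rank-2 K3 curves, which we already control by Proposition \ref{petri_rank2}, to the generic point of the divisor, using the irreducibility of $M^1_{g,s+1}$ together with the structural results of \cite{Arbarello-Cornalba}. Note first that a curve $C$ lying on a Rank-2 K3 surface does define a point of $M^1_{g,s+1}$: the pencil $\xi=\mathcal{O}(B)_{|C}$ is a $g^1_{s+1}$, and since $\rho(g,1,s+1)=(2s+1)-2(s+1)=-1$ the locus $M^1_{g,s+1}$ is indeed a divisor. For such a $C$, Proposition \ref{petri_rank2} already yields the local forms of both conclusions: part b) shows that $\xi$ is a reduced isolated point of $W^1_{s+1}$, while parts a) and c) show that base-point-free $g^1_{s+2}$'s exist and that every one of them is Petri. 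These K3 curves will be the test points at which I verify the generic conditions.

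For the uniqueness of the $g^1_{s+1}$, I would argue as follows. The general curve of genus $2s+1$ has gonality $s+2$, so a curve carrying a $g^1_{s+1}$ has gonality exactly $s+1$ and the divisor $M^1_{g,s+1}$ is the $(s+1)$-gonal locus. The property ``$W^1_{s+1}(C)$ consists of a single reduced point'' is open on $M^1_{g,s+1}$, and Proposition \ref{petri_rank2} b) exhibits the reduced isolated structure at $\xi$ for a K3 test curve. Combining this with the count of minimal pencils on the general element of the $(s+1)$-gonal locus provided by \cite{Arbarello-Cornalba} (equivalently, the birationality onto $M^1_{g,s+1}$ of the space of pairs $(C,g^1_{s+1})$), one gets that the general element of the irreducible divisor carries a unique $g^1_{s+1}$.

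For the Petri property of base-point-free $g^1_{s+2}$'s I would again combine semicontinuity with the K3 test curve. Injectivity of the Petri map $\mu_{0,L}: H^0(L)\otimes H^0(K_CL^{-1})\to H^0(K_C)$ is an open condition on the relative locus of base-point-free pencils of degree $s+2$, and by Proposition \ref{petri_rank2} a) it holds for every such $L$ on a K3 test curve. Hence the set of $[C]\in M^1_{g,s+1}$ for which all base-point-free $g^1_{s+2}$'s are Petri is nonempty and open, and, $M^1_{g,s+1}$ being irreducible, it is dense. This is exactly the second assertion for the general $[C]$.

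The step I expect to be the main obstacle is making this last openness rigorous. Because base-point-freeness is itself an open condition, the relative family of base-point-free $g^1_{s+2}$'s over $M^1_{g,s+1}$ is not proper, so a priori a family of base-point-free non-Petri pencils could specialize to a non-base-point-free limit, and the naive ``there exists a bad pencil'' need not be a closed condition. To handle this I would use the uniqueness of the $g^1_{s+1}$ established above, together with the absence of a $g^2_{s+2}$ on the general element (forced by $\rho(g,2,s+2)<0$): the only non-base-point-free $g^1_{s+2}$'s are then the complete pencils $\xi(p)$, $p\in C$, obtained by adding a base point to the unique $g^1_{s+1}$. Thus the non-base-point-free locus in the relative $W^1_{s+2}$ is a single controlled section, which can be excised; on the complementary proper family the non-injectivity of $\mu_{0,L}$ is a genuinely closed condition, its image in $M^1_{g,s+1}$ is a proper closed subset, and the K3 test curve lies outside it.
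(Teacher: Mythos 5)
The paper itself gives only a one-line proof (``Proposition \ref{petri_rank2}, together with the results in \cite{Arbarello-Cornalba}, gives the following proposition''), and your strategy --- use the Rank-2 K3 curve as a test point, invoke the irreducibility of $M^1_{g,s+1}$ from \cite{Arbarello-Cornalba}, and propagate by semicontinuity --- is exactly the argument the authors intend. Your treatment of the first assertion is fine: $W^1_{s+1}$ is proper over the base, so ``$W^1_{s+1}(C)$ is a single reduced point'' is an open condition on the divisor once one knows (from Proposition \ref{petri_rank2} b) plus the Ciliberto--Pareschi/Donagi--Morrison description of minimal pencils on K3 curves, or directly from \cite{Arbarello-Cornalba}) that the test curve has $W^1_{s+1}=\{\xi\}$ with $\xi$ reduced. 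You also correctly isolate the genuine difficulty in the second assertion, which the paper passes over in silence: since base-point-freeness is open, ``some base-point-free $g^1_{s+2}$ fails Petri'' is only a constructible condition, and the test curve lying in its complement proves nothing unless that condition is actually closed.

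Your proposed repair of this step does not work as written. After excising the non-base-point-free locus $\{\xi_b(p)\}$ from the relative $\mathcal{W}^1_{s+2}$, the ``complementary family'' is the complement of a closed subset of a proper family, hence is \emph{not} proper, and the image of the non-Petri locus inside it need not be closed. Concretely, the scenario that remains open is this: a component $Z_1$ of the relative non-Petri locus, finite and dominant over the divisor, whose general point is a base-point-free non-Petri pencil but which specializes over the K3 test curve $C_0$ to one of the pencils $\xi(p_0)$ with $p_0\in\supp D$. These points really do exist on $C_0$ (the divisor $D$ of the section of $\eta\xi^{-1}$ has degree $2s-2>0$), they are non-Petri ($\ker\mu_{0,\xi(p_0)}=H^0(\eta\xi^{-1}(-p_0))\ne 0$), and they are exactly where the two branches $V$ and $V'$ of $W^1_{s+2}(C_0)$ meet, with $V'$ consisting of base-point-free pencils; so a degeneration of base-point-free pencils into $\xi(p_0)$ is not absurd on its face. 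Excluding it requires an actual local computation at $(C_0,\xi(p_0))$ --- e.g.\ an analysis of the tangent space to the relative $\mathcal{W}^1_{s+2}$ there via the Gaussian $\mu_1$ on $\ker\mu_0$, or some other argument showing that any non-Petri multisection through $\xi(p_0)$ must lie in the family $\{\xi_b(p)\}$. As it stands this step is a genuine gap in your write-up (one the paper does not address either).
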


%%%%%%%%%%%%%%%%%%%%%%%%%%%%%%%%%%%%%%%%%%%%%%%%%%%%%%%%%%%%%%%%%%%%%%%%%%%%%%%%%%%

\section{Brill-Noether loci in the Rank-2 case}\label{bn_vb2}

%In this section we take as blueprint the first three sections of Mukai's paper \cite{Mukai2} on curves of genus 11
%and  generalize the results therein to all genera $g=2s+1\geq 11$. 

   In this section we assume that we are in the Rank-2 case.  
	Using Lemma \ref{a-b} b) and 
proceeding  as in Section 3 of \cite{Mukai2},
to each point $x\in S$ we associate a rank-2, pure sheaf $\E_x$ defined as the unique extension

\be\label{schwarz} 
0\to \mathcal{O}_S(B)\to\E_x\to I_x(A)\to 0.
\ee
We define 
$$
E_x={\E_x}_{|C}
$$
 The main theorem we want to prove in this section is the following generalization of Theorem 3 of \cite{Mukai2}.

\begin{theorem}\label{caso(s+1)gon}  By associating to $x \in S$ the bundle $E_x$ on $C$ we obtain an isomorphism between $S$ and $M_C(2, K_C,s)$.
\end{theorem}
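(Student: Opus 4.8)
The plan is to factor the assignment $x\mapsto E_x={\mathcal{E}_x}_{|C}$ as $\sigma\circ\rho$, where $\rho\colon S\to M_v(S)$ sends $x$ to $[\mathcal{E}_x]$ and $\sigma\colon M_v(S)\to M_C(2,K_C,s)$ is the restriction morphism of Proposition \ref{s_to_c} (valid in the Rank-2 case by Remark \ref{stab_voisin} together with Proposition \ref{petri_rank2}c)), and then to prove that each of $\rho$ and $\sigma$ is an isomorphism. First I would record that $\mathcal{E}_x$ has the correct numerical type: from \eqref{schwarz} and $A^2=2s-2$, $B^2=0$, $A\cdot B=s+1$ one computes $c_1(\mathcal{E}_x)=[C]$, $c_2(\mathcal{E}_x)=s+2$ and $v(\mathcal{E}_x)=(2,[C],s)=v$. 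From $0\to I_x\to\mathcal{O}_S\to\mathcal{O}_x\to 0$ and Lemma \ref{a-b}b) (which gives $H^\bullet(\mathcal{O}_S(B-A))=0$) one finds that $\Ext^1(I_x(A),\mathcal{O}_S(B))\cong\CC$, with connecting isomorphism onto the local group $\Ext^2(\mathcal{O}_x,\mathcal{O}_S(B-A))\cong\CC$; hence the extension \eqref{schwarz} is indeed unique and the unique nonsplit extension is \emph{locally free}. Stability of $\mathcal{E}_x$ I would obtain by ruling out destabilizing saturated sub-line bundles $\mathcal{O}_S(aA+bB)$: using Lemma \ref{a-b}a),b) (in particular $h^0(\mathcal{E}_x(-A))=0$ because $h^0(\mathcal{O}_S(B-A))=0$) only $\mathcal{O}_S(B)$, of $C$-slope $s+1<2s$, can occur, and then the absence of walls (Lemma \ref{nowalls}) upgrades $\mu_C$-semistability to stability. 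Thus $[\mathcal{E}_x]\in M_v(S)$, the family is flat over $S$, and $\rho$ is a morphism (so in particular $M_v(S)\neq\emptyset$).

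Next I would show $\rho$ is an isomorphism. Twisting \eqref{schwarz} by $\mathcal{O}_S(-B)$ and using $h^0(\mathcal{O}_S(A-B))=0$ (Lemma \ref{a-b}b)) gives $h^0(\mathcal{E}_x(-B))=h^0(\mathcal{O}_S)=1$, so $\mathcal{O}_S(B)\hookrightarrow\mathcal{E}_x$ is the \emph{unique} sub-line bundle of its slope; hence the quotient $I_x(A)$, and therefore the point $x=\supp(\mathcal{O}_S(A)/I_x(A))$, are intrinsically recovered from $\mathcal{E}_x$, i.e. $\rho$ is injective. An injective nonconstant morphism from the complete surface $S$ into the irreducible surface $M_v(S)$ is surjective and birational, and a birational morphism between the minimal surfaces $S$ and $M_v(S)$ (both smooth K3, hence $K\equiv 0$, by Lemma \ref{L:nowalls1}) is an isomorphism. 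This is the self-duality isomorphism $\rho\colon S\xrightarrow{\sim}M_v(S)$.

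It remains to prove $\sigma$ is an isomorphism. For étaleness I would apply Proposition \ref{L:rk1dsigma} to $\mathcal{E}=\mathcal{E}_x$, checking its three hypotheses by filtering $S^2\mathcal{E}_x$ according to \eqref{schwarz} (graded pieces built from $\mathcal{O}_S(2B)$, $\mathcal{O}_S(A+B)$ and $\mathcal{O}_S(2A)$ twisted by powers of $I_x$) and computing cohomology with Lemma \ref{a-b}: conditions (i) $H^1(S^2\mathcal{E}_x)=0$ and (ii) $H^0(S^2\mathcal{E}_x(-C))=0$ are cohomology vanishings for these pieces, while surjectivity (iii) of $S^2H^0(\mathcal{E}_x)\to H^0(S^2\mathcal{E}_x)$ reduces to the projective normality of $S\subset\PP^s$ and the surjection \eqref{proj-norm-eta} of Remark \ref{consq-a-b}. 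By Proposition \ref{L:rk1dsigma}(2) this makes $M_C(2,K_C,s)$ smooth of dimension $2$ at $E_x$ and $\sigma$ étale there. Finally I would prove that $\sigma$ (equivalently $\sigma\circ\rho$) is injective: restricting \eqref{schwarz} to $C$ relates $E_x$ to the sub $\xi$ and the quotient $\eta$, and by Proposition \ref{petri_rank2}b) the sub $\xi$ (the unique $g^1_{s+1}$) and the quotient $\eta$ are forced, so that $x$ is determined by the corresponding extension class. An injective étale morphism onto a smooth complete surface is an isomorphism onto a connected component; since the irreducible smooth surface $S$ has the same dimension as the component of $M_C(2,K_C,s)$ it meets, I conclude that $x\mapsto E_x$ is an isomorphism of $S$ onto $M_C(2,K_C,s)$.

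The \textbf{main obstacle} is this last step, the recovery of the point $x$ from the restricted bundle $E_x$, compounded by the negativity of the expected dimension of $M_C(2,K_C,s)$ (which is why one cannot argue purely dimension-theoretically that the image exhausts the Brill-Noether locus). The decisive input there is Lemma \ref{geq-3}: it controls precisely the $0$-dimensional subschemes that could make two distinct extensions $0\to\xi\to E\to\eta\to 0$ isomorphic, and thereby guarantees both that distinct points of $S$ yield non-isomorphic bundles and that no spurious components of $M_C(2,K_C,s)$ intrude.
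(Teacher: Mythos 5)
Your architecture (factor $x\mapsto E_x$ as $\sigma\circ\rho$, prove $\rho$ is an isomorphism via uniqueness of the sub-line bundle $\mathcal{O}_S(B)$, prove $\sigma$ \'etale via Proposition \ref{L:rk1dsigma}) is the paper's, but two essential steps are missing. First, you never actually establish that the \emph{restriction} $E_x={\E_x}_{|C}$ is stable: stability of $\E_x$ on $S$ (which you argue directly on the Picard lattice) does not imply stability of $E_x$ on $C$ --- the implication goes the other way, and indeed the paper deduces stability of $\E_x$ \emph{from} that of $E_x$. Your appeal to Remark \ref{stab_voisin} for the Rank-2 validity of Proposition \ref{s_to_c} is circular at this point of the paper: that remark defers the needed stability of $\wt E_L$ to Remark \ref{stab_voisin2}, whose proof begins by invoking Theorem \ref{caso(s+1)gon} itself. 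The paper does this work in Lemma \ref{non-split-x} (non-splitness of $0\to\xi(x)\to E_x\to\eta(-x)\to0$ for $x\in C$, via a determinant/evaluation argument) and Lemma \ref{dest}, which combines the section count $h^0(E_x)=s+2$ with Lemma \ref{geq-3} to rule out destabilizing subsheaves $\eta(-D)$ with $\deg D\ge 2$.

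Second, and more seriously, surjectivity of $\sigma$ is not proved. Your injective-\'etale argument only yields that $S$ maps isomorphically onto a connected component of $M_C(2,K_C,s)$; since the expected dimension of this Brill--Noether locus is negative, nothing so far excludes other components. You correctly name Lemma \ref{geq-3} as the decisive input, but you give no mechanism. The paper's mechanism is: for an \emph{arbitrary} $[E]\in M_C(2,K_C,s)$, Mukai's Lemma \ref{Mukai_lem1} gives $\dim\Hom(\xi,E)\ge h^0(E)-\deg\xi\ge 1$, hence an exact sequence $0\to\xi(D)\to E\to\eta(-D)\to0$; stability bounds $\deg D\le s-1$ and the section count plus Lemma \ref{geq-3} forces $\deg D\le 1$; then either $E\cong E_p$ for $p\in C$ (uniqueness of the non-split extension), or one applies Mukai's Lemma \ref{mukai2}(2) together with the identification \eqref{proj-norm-eta1} of the quadratic hull of $\Phi_{|\eta|}(C)$ with $S$ to produce the point $x\in S$ with $E\cong E_x$. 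Without this step (none of Lemma \ref{Mukai_lem1}, Lemma \ref{mukai2}, or the quadratic-hull identification appears in your sketch), you have only an isomorphism of $S$ with \emph{one component} of $M_C(2,K_C,s)$, not with $M_C(2,K_C,s)$ itself, which is the content of the theorem.
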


The proof of the theorem will be obtained from the following chain of facts to be proved:

\begin{itemize}
	\item  Both $E_x$ and ${\E_x}$ are stable. Moreover $[\E_x]\in M_v(S)$ and 
$[E_x]\in M_C(2,K_C,s)$.

\item 
The   map 
\be
\aligned
\rho: S& \longrightarrow M_v(S)\\
&x\mapsto [\E_x]
\endaligned
\ee
is an isomorphism of K3 surfaces.

\item From these two facts it follows that  the morphism
\be\label{mod_s_to_c3}
\aligned
\sigma: M_v(S)&\longrightarrow M_C(2, K_C,s)\\
&[\E]\mapsto [\E_{|C}]
\endaligned
\ee
is well defined.

\item $\sigma$ is bijective.

\item For every $x\in S$ 
$$
\dim T_{[E_x]}(M_C(2,K_C,s))=2
$$
and the differential of $\sigma$ is an isomorphism at $[\E_x]$, for every $x\in S$.

The last two items  finally give: 
 
\item  $\sigma$   is in fact an isomorphism of smooth K3 surfaces and therefore $\sigma\rho: S \longrightarrow M_C(2,K_C,s)$ is an isomorphism.

\end{itemize}

We need to establish a number of preliminary results. First of all, since $H^1(S,\mathcal{O}(B))=0$, from 
(\ref{schwarz}) we get the exact sequence

\be\label{schwarz-0}
0\to H^0(S, \mathcal{O}_S(B))\to H^0(S, \E_x)\to H^0(S, I_x(A))\to 0.
\ee
\be\label{dim-ex}
H^0(S, \E_x)=s+2\,,\qquad H^i(S, \E_x)=0\,,\quad i=1,2
\ee
and $\E_x$ is generated by global sections. Notice that since $\det\E_x=\mathcal{O}(C)$ we have
\be\label{dual-e-x}
\E_x^\vee\cong\E_x(-C)
\ee
From the sequence and (\ref{dim-ex}) we get

\be\label{restr-e-x}
0\to \E_x^\vee\to \E_x\to E_x\to 0
\ee
we then  get an isomorphism

\be\label{e-x-e-x}
H^0(S, \E_x)\cong H^0(C, E_x)
\ee
and in particular 

\be\label{dim-e-x1}
\dim H^0(C, E_x)=s+2=h^0(C,\xi)+h^0(C,\eta)-1\,. 
\ee

If $x\notin C$
restricting (\ref{schwarz}) to $C$ gives
\be\label{restr-schwarz1}
0\to \xi\to E_x\to \eta\to 0\, 
\ee

If $x\in C$ then, factoring out the torsion from $I_x(A)\otimes \mathcal{O}_C$, we get 
\be\label{restr-schwarz-x}
0\to \xi(x)\to E_x\to \eta(-x)\to 0\, 
\ee

We are now going to prove two results  that are key elements in   the proof of the stability of $E_x$
(Proposition \ref{stab-ex}).

\begin{lem}\label{non-split-x}The extension (\ref{restr-schwarz-x}) is non-split.
\end{lem}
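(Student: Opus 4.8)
The claim is that the extension
\be\label{planseq}
0\to \xi(x)\to E_x\to \eta(-x)\to 0
\ee
arising from restricting (\ref{schwarz}) to $C$ at a point $x\in C$ is non-split. My strategy is to argue by contradiction: suppose \eqref{planseq} splits, so that $E_x\cong \xi(x)\oplus\eta(-x)$, and derive a contradiction by comparing global sections with (\ref{dim-e-x1}). The point is that a splitting forces an \emph{excess} of sections of $E_x$ over the count $s+2$ that we already know from (\ref{e-x-e-x}) and (\ref{dim-ex}), and this excess should be located precisely in the behaviour of $\eta$ at the point $x$.

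\textbf{Key steps.} First I would record the section counts of the two summands. We have $h^0(C,\xi)=2$, while $h^0(C,\xi(x))\geq 2$ always, and a splitting would give
\[
h^0(C,E_x)=h^0(C,\xi(x))+h^0(C,\eta(-x)).
\]
On the other hand (\ref{dim-e-x1}) asserts $h^0(C,E_x)=s+2=h^0(C,\xi)+h^0(C,\eta)-1$. So a splitting would force
\[
h^0(C,\xi(x))+h^0(C,\eta(-x))=h^0(C,\xi)+h^0(C,\eta)-1.
\]
Now $h^0(C,\eta(-x))\geq h^0(C,\eta)-1$ and $h^0(C,\xi(x))\geq h^0(C,\xi)$ trivially, so equality can hold only if $h^0(C,\xi(x))=h^0(C,\xi)=2$ \emph{and} $h^0(C,\eta(-x))=h^0(C,\eta)-1$, i.e.\ $x$ is not a base point of $\eta$. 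The first of these says that $x$ does \emph{not} impose an independent condition being added to the pencil $\xi$, equivalently that $|\xi+x|$ is still a pencil, i.e.\ $x$ lies in the base locus of the complete series $|\xi(x)|$; since $\xi$ is base-point-free (Proposition \ref{petri_rank2}), $h^0(C,\xi(x))=2$ means $x$ is a base point of $\xi(x)$, and this must be ruled out.

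\textbf{Refining the contradiction via the residual series.} The cleaner route, which I expect to be the intended one, is to tensor the splitting with $K_C$ and use Serre duality together with $K_C=\xi+\eta$ (since $|C|=|A+B|$ restricts to $K_C$ by adjunction, and $\xi=\mathcal{O}(B)_{|C}$, $\eta=\mathcal{O}(A)_{|C}$). A splitting $E_x\cong\xi(x)\oplus\eta(-x)$ yields a nonzero homomorphism $\eta(-x)\to E_x$ splitting the surjection, hence a section of $\hom(\eta(-x),E_x)=E_x\otimes\eta^{-1}(x)$. Dualizing and using $\det E_x=K_C$, the splitting is equivalent to the vanishing of the connecting/extension class in
\[
\Ext^1_C(\eta(-x),\xi(x))=H^1(C,\xi\eta^{-1}(2x))=H^0(C,\eta\xi^{-1}(-2x))^\vee,
\]
where the last equality is Serre duality using $K_C\xi^{-1}\eta(-2x)=\eta^2\xi^{-2}\cdot(\ldots)$—more directly $K_C\otimes(\xi\eta^{-1}(2x))^{-1}=K_C\eta\xi^{-1}(-2x)=\eta^2\xi^{-1}\cdots$; I would simplify this using $K_C=\xi\eta$ to get the extension group $H^0(C,\eta\xi^{-1}(-2x))^\vee$. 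By Proposition \ref{petri_rank2}\,b) we have $h^0(C,\eta\xi^{-1})=1$, so $\eta\xi^{-1}=\mathcal{O}_C(D)$ for a \emph{unique} effective divisor $D$ of degree $\deg\eta-\deg\xi=(3s-1)-(s+1)=2s-2$. Thus $H^0(C,\eta\xi^{-1}(-2x))\neq 0$ precisely when $2x\leq D$, i.e.\ when $x$ is a point of $D$ of multiplicity $\geq 2$; for a general point $x\in C$ (or even for all but finitely many $x$) this group vanishes, and the extension class is automatically nonzero.

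\textbf{Main obstacle.} The subtle point—and the step I would be most careful about—is that the statement claims non-splitting for \emph{every} $x\in C$, whereas the naive $\Ext$-vanishing argument only gives it for $x$ outside the finite locus where $2x\leq D$. The resolution must come from the fact that \eqref{planseq} is not an arbitrary extension but the \emph{specific} one induced from the nonsplit bundle $\E_x$ on $S$ via the torsion-factoring in (\ref{restr-schwarz-x}): I would show that the $S$-level extension class in $\Ext^1_S(I_x(A),\mathcal{O}_S(B))$, which is nonzero by construction (uniqueness and non-triviality of $\E_x$, cf.\ the setup around (\ref{schwarz})), restricts to a nonzero class downstairs for all $x$, by tracing the class through the restriction map $\Ext^1_S(I_x(A),\mathcal{O}_S(B))\to\Ext^1_C(\eta(-x),\xi(x))$ and showing it is injective (or at least nonzero on our class) using the cohomology vanishing of Lemma \ref{a-b}\,b). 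That is where the Rank-2 hypotheses genuinely enter, and it is the part of the argument I would expect to require the most care.
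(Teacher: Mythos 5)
Your proposal does not close the argument, and two of its three steps contain errors.

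First, the section count in your ``Key steps'' produces no contradiction. Since $\eta$ is very ample (it is the restriction of the very ample system $|A|$), one has $h^0(C,\eta(-x))=h^0(C,\eta)-1$ for \emph{every} $x\in C$, and by Riemann--Roch this is equivalent to $h^0(C,\xi(x))=h^0(C,\xi)=2$. So the two conditions you derive from a hypothetical splitting are automatically satisfied for all $x$; the fact that $x$ is a base point of $|\xi(x)|$ (i.e. $|\xi(x)|=|\xi|+x$) is not something that can be ``ruled out'' --- it always happens. Second, the $\Ext$ computation is wrong. With $K_C=\xi\otimes\eta$, Serre duality gives
\[
\Ext^1_C(\eta(-x),\xi(x))\cong H^1(C,\xi\eta^{-1}(2x))\cong H^0\bigl(C,K_C\xi^{-1}\eta(-2x)\bigr)^\vee=H^0\bigl(C,\eta^2(-2x)\bigr)^\vee,
\]
which has dimension $4s-4$ and never vanishes; your answer $H^0(C,\eta\xi^{-1}(-2x))^\vee$ is not what Serre duality yields. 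Worse, even granting your computation the logic is inverted: if the ambient group $\Ext^1$ vanished, \emph{every} extension would split, so the extension class could not be ``automatically nonzero'' --- it would be forced to be zero. Finally, the step you correctly identify as the heart of the matter (showing that the nonzero class of (\ref{schwarz}) restricts to a nonzero class on $C$ for every $x$) is exactly what the lemma asserts, and you offer no argument for it beyond ``I would show that\dots''.

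The paper's proof takes a different and genuinely two-dimensional route: assuming a splitting $E_x\to\xi(x)$, it composes with $\E_x\to E_x$ and sets $\F=\ker(\E_x\to\xi(x))$, a rank-two sheaf on $S$ with $c_1(\F)=0$, $\wedge^2\F=\mathcal{O}_S$, and $H^0(S,\F)$ of dimension $s$ surjecting onto $H^0(S,I_x(A))$. The second wedge of the evaluation map $H^0(S,\F)\otimes\mathcal{O}_S\to\F\subset\E_x$ lands in $\mathcal{O}_S\subset\mathcal{O}_S(C)$, hence is a constant; restricting to $C$ one sees it vanishes at $x$ and therefore identically, so the evaluation map has rank-one image isomorphic to $I_x(A)$, which would split (\ref{schwarz}) --- a contradiction. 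If you want to salvage your approach, this is the mechanism you would need to reproduce: the non-splitting is detected not by an $\Ext$-vanishing on $C$ but by transporting the hypothetical curve-level splitting back up to $S$.
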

\proof
To prove this we proceed  as in the proof of Mukai's Proposition 3 in \cite{Mukai2}.

Let $E_x\to \xi(x)$ be a splitting. 
Consider the diagram
$$
\xymatrix{
&0\ar[d]&0\ar[d]\\
&\E_x(-C)\ar@{=}[r]\ar[d]&\E_x(-C)\ar[d]\\
0\ar[r]&\F\ar[r]\ar[d]&\E_x\ar[r]\ar[d]&\xi(x)\ar[r]\ar@{=}[d]&0\\
0\ar[r]&\eta(-x)\ar[r]\ar[d]&E_x\ar[r]\ar[d]&\xi(x)\ar[r]&0\\
&0&0\\
}
$$
Then $ H^0(S, \F)$ is an $s$-dimensional subspace of $H^0(S, \E_x)$ mapping onto $H^0(S, I_xA)$. We have $c_1(\F)=0$ so that $\wedge^2\F=\mathcal{O}_S$. Look at the evaluation map
$H^0(S, \F)\otimes\mathcal{O}_S\to \F\subset\E_x$
and take its second wedge product
$$
\wedge^2(H^0(S, \F)\otimes \mathcal{O}_S)\overset \alpha\longrightarrow\wedge^2\F=\mathcal{O}_S\subset\wedge^2\E_x=\mathcal{O}_S(C)
$$
where  $\alpha=(c_1,\dots, c_n)$, with $c_i\in \CC$, $i=1,\dots,n$.
Restricting $\alpha$ to $C$ we see that it vanishes on $x$ and therefore
vanishes identically. Thus, the image of the evaluation map 
$H^0(S, \F)\otimes\mathcal{O}_S\to \F\subset\E_x$ is of rank one and is isomorphic to $I_x(A)$, which is a contradiction, since $\E_x$ is non split.

\endproof

 \begin{lem}\label{dest} Let $E$ be a  rank two vector bundle (not necessarily semi-stable) with canonical determinant on $C$. 
 Suppose that
 $ h^0(E)= s+2$ and that $E$ contains a line sub bundle isomorphic to $\xi$ or $\xi(p{)}$ for a point $p\in C$.
 Then either
 
 a) $E$ is stable, or
 
 b) there is an exact sequence $0\to\xi\to E\to \eta\to 0$, and $\eta(-p)$ is a destabilizing subsheaf of $E$.
 
 c) $E=\xi({p})\oplus \eta(-p)$
 \end{lem}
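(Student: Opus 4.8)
The plan is to run a maximal-destabilization argument and feed its output into Lemma \ref{geq-3}. If $E$ is stable we are in case a), so I assume $E$ is not stable and take a line subbundle $M\subset E$ of maximal degree; then $\deg M\ge 2s$ (the slope of $E$, since $\deg E=\deg K_C=4s$), and because $\det E=K_C$ the quotient $M'=E/M=K_C\otimes M^{-1}$ is a line bundle with $\deg M'=4s-\deg M\le 2s$. Write $\Lambda$ for the given subbundle, either $\xi$ or $\xi(p)$, so that $\deg\Lambda\le s+2<2s\le\deg M$. The composite $\Lambda\hookrightarrow E\twoheadrightarrow M'$ cannot vanish: otherwise $\Lambda\subseteq M$, and since both are saturated line subbundles this forces $\Lambda=M$, contradicting the degrees. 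Hence $\Lambda\hookrightarrow M'$ and $M'\cong\Lambda(D)$ for an effective divisor $D$. Setting $D'=D$ when $\Lambda=\xi$ and $D'=p+D$ when $\Lambda=\xi(p)$, I obtain $M'=\xi(D')$ and $M=K_C\xi^{-1}(-D')=\eta(-D')$, with $1\le \deg D'\le s-1$ (the lower bound because $\deg D'=0$ would give $\xi\oplus\eta\hookrightarrow E$, hence $E\cong\xi\oplus\eta$ and $h^0(E)=s+3$, a contradiction).

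The heart of the argument is a lower bound on $h^0(\eta(-D'))$, obtained not from a Clifford-index estimate but from the Serre-dual pairing $\eta(-D')=K_C\otimes\xi(D')^{-1}$. Riemann--Roch and Serre duality give
$$
h^0(\xi(D'))=h^0(\eta(-D'))+\chi(\xi(D'))=h^0(\eta(-D'))+\deg D'-s+1 .
$$
Substituting this into $h^0(E)\le h^0(M)+h^0(M')=h^0(\eta(-D'))+h^0(\xi(D'))$ and using $h^0(E)=s+2$ yields
$$
h^0(\eta(-D'))\ \ge\ s-\frac{\deg D'-1}{2}\ \ge\ 3 ,
$$
the last inequality holding because $\deg D'\le s-1$ and $s\ge 5$. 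I then transport this bound to the surface: from $0\to\mathcal{O}_S(-B)\to\mathcal{O}_S(A)\to\eta\to 0$ with $h^0(\mathcal{O}_S(-B))=h^1(\mathcal{O}_S(-B))=0$ one gets $H^0(S,\mathcal{O}_S(A))\cong H^0(C,\eta)$, so that, viewing $D'$ as a length-$\deg D'$ subscheme of $S$, $h^0(S,\mathcal{I}_{D'}(A))=h^0(C,\eta(-D'))\ge\max\{3,\,s-(\deg D'-1)/2\}$. Lemma \ref{geq-3} now forces $\deg D'=1$.

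With $\deg D'=1$ the two cases close. If $\Lambda=\xi$, then $D'$ is a single point $p$, the inclusion $\xi\hookrightarrow E$ produces the sequence $0\to\xi\to E\to\eta\to 0$, and $M=\eta(-p)$ (of degree $3s-2>2s$) is a destabilizing subsheaf: this is case b). If $\Lambda=\xi(p)$, then $D'=p+D$ of degree $1$ forces $D=0$ and $D'=\{p\}$, so $M=\eta(-p)$ and $M'=\xi(p)$; the composite $\xi(p)\hookrightarrow E\twoheadrightarrow M'=\xi(p)$ is a nonzero endomorphism of a line bundle, hence an isomorphism, giving $\xi(p)\cap M=0$ and an injection $\xi(p)\oplus\eta(-p)\hookrightarrow E$. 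As $\deg\xi(p)+\deg\eta(-p)=4s=\deg E$ and $\det\bigl(\xi(p)\oplus\eta(-p)\bigr)=K_C=\det E$, this injection is an isomorphism, which is case c).

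The step I expect to be most delicate is producing the bound $h^0(\eta(-D'))\ge s-(\deg D'-1)/2$ in exactly the form demanded by Lemma \ref{geq-3}; the point is that it must be sharp, and this is precisely why I use the Serre-dual identity relating $h^0(\xi(D'))$ and $h^0(\eta(-D'))$ together with the hypothesis $h^0(E)=s+2$, rather than any weaker Clifford-type inequality. A secondary point needing care is verifying that $\deg D'\ge1$ and that the maximal subbundle $M$, a priori known only to have degree $\ge 2s$, is genuinely pinned down to $\eta(-p)$ once $\deg D'=1$.
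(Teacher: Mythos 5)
Your argument is correct and is essentially the paper's own proof: both reduce the destabilizing subsheaf to the form $\eta(-D)$ with quotient $\xi(D)$ by showing the relevant composite $\Lambda\to E\to M'$ (equivalently $\alpha\to E\to\eta$) is nonzero, both derive the bound $h^0(\eta(-D))\ge s-(\deg D-1)/2$ from $h^0(E)=s+2$ via Riemann--Roch and Serre duality, and both feed this into Lemma \ref{geq-3} to force $\deg D=1$ before splitting into cases b) and c). Your write-up is in fact slightly more careful than the paper's at two points it leaves implicit, namely the identification $h^0(S,\mathcal{I}_{D'}(A))=h^0(C,\eta(-D'))$ needed to invoke Lemma \ref{geq-3} and the exclusion of $\deg D'=0$ via $h^0(\xi\oplus\eta)=s+3$.
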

 
 \begin{proof}
 Assume that $E$ is not stable;
 then we must have an exact sequence 
 $$
 0\to\alpha\to E\to \beta\to 0
 $$
 where $\alpha$ is a line bundle on $C$ of degree greater or equal to  $g-1$.
 We have two possible diagrams 
 \be\label{abxieta}
 \xymatrix{
& &0\ar[d]\\
 &&\alpha\ar[d]^a\\
0\ar[r]&\xi\ar[r]^c&E\ar[r]^d\ar[d]^b&\eta\ar[r]&0\,,\\
&&\beta\ar[d]\\
&&0
 }\qquad
 \xymatrix{
& &0\ar[d]\\
 &&\alpha\ar[d]^a\\
0\ar[r]&\xi({p})\ar[r]^c&E\ar[r]^d\ar[d]^b&\eta(-p)\ar[r]&0\\
&&\beta\ar[d]\\
&&0
 }
 \ee
 Since $\deg \alpha\geq2s> s+1=\deg\xi$, (resp. $\deg \alpha\geq2s> s+2=\deg\xi({p})$) there can be no injective map from $\alpha$ to $\xi$ (resp. $\xi({p})$). Thus, 
 we must have $\alpha=\eta(-D)$ and $\beta=\xi(D)$ for some positive divisor of degree $d\leq s-1$ (resp. $d\leq s-2$).
 We have
\be\label{ineq1}
 \aligned
 s+2=h^0(E)&\leq h^0(\xi(D))+h^0(\eta(-D))\\
 &=2h^0(\eta(-D))+s+1+d-g+1\\
 &=2h^0(\eta(-D))-s+1+d
  \endaligned
\ee
 Thus
\be\label{ineq2}
 h^0(\eta(-D))\geq s- \frac{d-1}{2}
\ee

Thus
$$
 h^0(\eta(-D))\geq s- \frac{d-1}{2}\geq\frac{s+2}{2}\geq 3\,,\qquad (\text{resp.}\quad  h^0(\eta(-D))\geq \frac{s+3}{2}\geq 3)
$$
 since $s\geq 5$. We can then apply 
 Lemma \ref{geq-3} and  we get $d\leq 1$. Then the only possibility are  the ones described in points b) and c).
 
 \end{proof} 
 
 The next result needed to prove the stability of $\E_x$ and $E_x$ is Mukai's Lemma 2 in \cite{Mukai2}.
We include its statement for the convenience of the reader.
 
\begin{lem} \label{mukai2} (Mukai) Let $L$ be a line bundle on a smooth curve $C$ and consider non-trivial extensions
$$
0\to L\to E\to M\to 0
$$
with $M=K_CL^{-1}$.

1) The extensions $E$ with $h^0(E)=h^0(L)+h^0(M)$ are parametrized by the projective space
$\PP^*\operatorname{Coker}\{S^2H^0(M)\to H^0(M^2)\}$.

2) Assume that the multiplication map $S^2H^0(M)\to H^0(M^2)$ is surjective. Then $h^0(C, E)\leq h^0(C, L)+h^0(C, M)-1$. 
Moreover, the  non-trivial extensions  $E$ such that 
$h^0(C, E)=h^0(C, L)+h^0(C, M)-1$
are  parametrized by the quadratic hull of the image of $\Phi_{|M|}: C\to \PP^*H^0(C, M)$.
More precisely, for every point $x$ of the quadric hull, there is  a  unique extension $E$ such that the image of the linear map $H^0(C, E)\to H^0(C,M)$ is the codimension one subspace corresponding to $x$.
\end{lem}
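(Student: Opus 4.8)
The plan is to read off everything from the long exact cohomology sequence of the extension, after reinterpreting its connecting homomorphism — via Serre duality — as a symmetric bilinear form manufactured from the multiplication map $\mu\colon S^2H^0(M)\to H^0(M^2)$. First I would record the duality identification of the $\Ext$ group. Since $\Hom(M,L)=M^{-1}L=K_C^{-1}L^2$ and $M^{-1}L\otimes M^2=K_C$, Serre duality gives
\[
\Ext^1(M,L)=H^1(C,M^{-1}L)\cong H^0(C,M^2)^\vee,
\]
so the class $e$ of a non-trivial extension is a nonzero linear functional on $H^0(C,M^2)$, and the Serre pairing $H^1(C,M^{-1}L)\otimes H^0(C,M^2)\to H^1(C,K_C)=\CC$ is cup product.

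\textbf{Key step: the connecting map as a symmetric form.} For an extension with class $e$, the connecting homomorphism in
\[
0\to H^0(L)\to H^0(E)\to H^0(M)\overset{\delta}\to H^1(L)
\]
is cup product $\delta(s)=e\cup s$. Pairing $\delta(s)$ with $t\in H^0(M)=H^1(L)^\vee$ by Serre duality, and using that cup product is compatible with the duality pairing, one obtains $\langle\delta(s),t\rangle=\langle e,\,s\cdot t\rangle$, where $s\cdot t\in H^0(M^2)$ is the image under $\mu$. Hence $\delta$ is governed by the symmetric bilinear form $B_e(s,t)=\langle e,\mu(s\otimes t)\rangle$ on $H^0(M)$; equivalently $B_e=\mu^\vee(e)$, where $\mu^\vee\colon H^0(M^2)^\vee\to S^2H^0(M)^\vee$ is the transpose of $\mu$. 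Now $h^0(E)=h^0(L)+h^0(M)$ holds iff $\delta=0$ iff $B_e=0$ iff $e\in\ker\mu^\vee=(\operatorname{Im}\mu)^\perp\cong\coker(\mu)^\vee$. Projectivizing, the non-trivial extensions with this maximal $h^0(E)$ are exactly those with $[e]\in\PP^*\coker\{S^2H^0(M)\to H^0(M^2)\}$, which is part~1).

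\textbf{Part 2 and the rank-one analysis.} Assume now $\mu$ is surjective, so $\mu^\vee$ is injective and $B_e\neq0$ for every non-trivial $e$; then $\delta\neq0$, giving $h^0(E)\le h^0(L)+h^0(M)-1$, the asserted bound. Equality holds precisely when $\operatorname{rank}\delta=1$, i.e. when the symmetric form $B_e$ has rank one, i.e. $B_e=\ell^2$ for some nonzero $\ell\in H^0(M)^\vee$, unique up to scalar. The point is that $\ker\mu\subset S^2H^0(M)$ is exactly the space of quadrics vanishing on $\Phi_{|M|}(C)\subset\PP^*H^0(C,M)$, while $B_e=\mu^\vee(e)$ lies in $\operatorname{Im}\mu^\vee=(\ker\mu)^\perp$. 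Therefore the rank-one forms $\ell^2$ that occur are precisely those with $\ell^2\perp\ker\mu$, that is, those $[\ell]$ lying on every quadric through $\Phi_{|M|}(C)$ — the points of the quadratic hull. Injectivity of $\mu^\vee$ makes $e\mapsto B_e$ a bijection onto $(\ker\mu)^\perp$, so each point $x=[\ell]$ of the quadratic hull determines a unique $[e]$, hence a unique extension $E$; and the image of $H^0(E)\to H^0(M)$ is $\ker\delta=\{s:\ell(s)\,\ell=0\}=\ker\ell$, the codimension-one subspace corresponding to $x$. This establishes part~2).

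\textbf{Main obstacle.} The crux is the second step: verifying cleanly that the connecting map is cup product with $e$ and that Serre duality converts it into the symmetric form $B_e=\mu^\vee(e)$, so that rank-one degeneracy of $\delta$ becomes the statement that $B_e$ is a perfect square and hence, through $(\ker\mu)^\perp$, membership of $[\ell]$ in the quadratic hull. Once this dictionary is in place, the remainder is linear-algebra bookkeeping (transposes, annihilators, and the Veronese description of rank-one symmetric tensors).
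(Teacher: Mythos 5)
Your proof is correct and complete: identifying the coboundary $\delta$ with cup product against the extension class $e$, transporting it via Serre duality to the symmetric form $B_e=\mu^\vee(e)$ on $H^0(M)$, and then reading off parts 1) and 2) from the vanishing, respectively the rank-one degeneracy, of $B_e$ (with the Veronese/annihilator dictionary identifying rank-one elements of $(\ker\mu)^\perp$ with points of the quadratic hull) is exactly the right mechanism. Note that the paper itself gives no proof of this statement --- it is quoted verbatim from Mukai's genus-11 paper (Lemma 2 of \cite{Mukai2}) ``for the convenience of the reader'' --- and your argument is essentially the standard one underlying Mukai's original proof, so there is no divergence to report.
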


We are now ready to prove: 

\begin{prop}\label{stab-ex} Both $E_x$ and ${\E_x}$ are stable. Moreover $[\E_x]\in M_v(S)$ and 
$[E_x]\in M(C,K,s)$.
\end{prop}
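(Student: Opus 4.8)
The plan is to establish the two stability assertions in the order $\E_x$ then $E_x$, reading off the membership statements $[\E_x]\in M_v(S)$ and $[E_x]\in M_C(2,K_C,s)$ as we go. By \ref{dim-ex} the sheaf $\E_x$ has rank $2$, $c_1=[C]$ and $\chi=s+2$, so its Mukai vector is exactly $v=(2,[C],s)$, and $\mu_C(\E_x)=\tfrac12 C^2=2s$. Since $v$ is primitive with no walls (Lemmas \ref{nowalls} and \ref{L:nowalls1}), it suffices to prove $\E_x$ is $\mu_C$-semistable, for then it is automatically $[C]$-stable and locally free, whence $[\E_x]\in M_v(S)$. I would rule out a saturated destabilizing line subsheaf $\mathcal O(D)\hookrightarrow \E_x$ with $D\cdot C\ge 2s$ directly from \ref{schwarz}: twisting by $\mathcal O(-D)$ shows $H^0(\E_x(-D))\neq0$ forces $\mathcal O(B-D)$ or $\mathcal O(A-D)$ to be effective. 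Writing $D=hA+kB$ and using Lemma \ref{a-b}(a) together with nefness of $A$ and $C$ (indeed $A$ is very ample by Lemma \ref{a-b}(c)), a short numerical computation --- intersecting the effective class with $C$ and with $A$ --- confines the only possibility to $D=A$, which is excluded since $H^0(\E_x(-A))=0$ by Lemma \ref{a-b}(b). Hence $\E_x$ is stable.

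Next, $E_x=\E_x|_C$ has rank $2$, $\det E_x=K_C$ (adjunction, as $K_S=0$ and $\det\E_x=\mathcal O(C)$) and $h^0(E_x)=s+2$ by \ref{e-x-e-x}--\ref{dim-e-x1}; moreover it contains $\xi$ when $x\notin C$ (sequence \ref{restr-schwarz1}) and $\xi(x)$ when $x\in C$ (sequence \ref{restr-schwarz-x}). Thus $E_x$ meets the hypotheses of Lemma \ref{dest}, and to conclude stability I must exclude that $E_x$ contains a destabilizing subsheaf of the form $\eta(-D)$ with $\deg D\le 1$; this single condition subsumes both case (b) and the split case (c), in which $\eta(-p)$ is a direct summand. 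The colength bound $\deg D\le 1$ is exactly the output of Lemma \ref{geq-3} fed by the section count in \ref{restr-schwarz1}/\ref{restr-schwarz-x}. (I avoid deducing $E_x$ stable from $\E_x$ by a restriction argument as in Proposition \ref{s_to_c}, because in the Rank-2 case the Hodge Index estimate leaves a numerical gap near slope $2s$.)

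For $x\in C$ the exclusion is elementary. Composing a putative inclusion $\eta(-D)\hookrightarrow E_x$ with the quotient $E_x\twoheadrightarrow\eta(-x)$ of \ref{restr-schwarz-x}, the composite is either zero --- forcing $\eta(-D)\hookrightarrow\xi(x)$, impossible since $\deg\eta(-D)\ge 3s-2>s+2=\deg\xi(x)$ for $s\ge5$ --- or nonzero, which by degrees requires $\deg D=1$, $D=x$ and an isomorphism onto $\eta(-x)$, splitting \ref{restr-schwarz-x} against Lemma \ref{non-split-x}. The case $x\notin C$ is the crux. Here I would compute the image of the map $H^0(E_x)\to H^0(\eta)$ induced by the quotient in \ref{restr-schwarz1}: by \ref{schwarz-0} the sections of $\E_x$ surject onto $H^0(I_x(A))$, and under the restriction isomorphism $H^0(S,\mathcal O(A))\cong H^0(C,\eta)$ (cf. Remark \ref{consq-a-b}) this $s$-dimensional image is precisely the hyperplane corresponding to the point $[x]\in\PP^s=\PP H^0(\mathcal O(A))^\vee$ at which $|A|$ embeds $x$. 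On the other hand, since $E_x$ attains the bound $h^0(E_x)=h^0(\xi)+h^0(\eta)-1$ of Lemma \ref{mukai2}(2) (whose surjectivity hypothesis is \ref{proj-norm-eta}), a subsheaf $\eta(-p)\hookrightarrow E_x$ would force that same image to equal $H^0(\eta(-p))$, i.e. the point $\Phi_{|\eta|}(p)$ of $\Gamma=\Phi_{|\eta|}(C)\subset\PP^s$. Comparing, $[x]=\Phi_{|\eta|}(p)\in\Gamma$ is impossible for $x\notin C$ because $|A|$ is very ample (Lemma \ref{a-b}(c)); the summand $\eta$ itself is ruled out even faster, as an isomorphism $\eta\xrightarrow{\sim}\eta$ on the quotient would split \ref{restr-schwarz1} and violate $h^0(E_x)=s+2$. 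This proves $E_x$ stable, and since $\det E_x=K_C$ with $h^0(E_x)=s+2$ we get $[E_x]\in M_C(2,K_C,s)$.

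The main obstacle is the case $x\notin C$ of the destabilizing alternative: unlike on $C$, the quotient $\eta$ now has strictly larger degree than $\eta(-p)$, so degree bookkeeping alone is insufficient, and one must translate the extension-theoretic data on $C$ into the projective geometry of $S$ in $\PP^s$, matching the ``extension point'' of $E_x$ with the image of $x$ under $|A|$ and invoking very ampleness to keep it off $\Gamma$. Lemma \ref{geq-3}, bounding the colength of any destabilizer, and Lemma \ref{mukai2}, parametrizing the relevant extensions by the quadratic hull, are precisely the tools that make this passage work.
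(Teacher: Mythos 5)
Your proof is correct, and for the stability of $E_x$ (the substantive half of the statement) it follows the paper's own route exactly: Lemma \ref{dest} reduces any destabilizer to $\eta$ or $\eta(-p)$ via Lemma \ref{geq-3}; for $x\in C$ one concludes from the non-splitting of (\ref{restr-schwarz-x}) (Lemma \ref{non-split-x}); for $x\notin C$ one identifies the image of $H^0(E_x)\to H^0(\eta)$ with the hyperplane $H^0(S,I_x(A))$ and observes that a destabilizer $\eta(-p)$ would force this hyperplane to be $H^0(\eta(-p))=H^0(I_p(A))$, whence $x=p\in C$ by very ampleness of $|A|$ --- this is precisely the step the paper leaves implicit in the phrase ``but then $x=p\in C$,'' and you have made it explicit. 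The one genuine divergence is the treatment of $\E_x$: the paper proves $E_x$ stable first and then gets stability of $\E_x$ in one line, since a destabilizing $L\subset\E_x$ has $L\cdot C\ge C^2/2$ and so $L_{|C}$ would destabilize $E_x$; you instead prove $\E_x$ stable first, directly from the defining extension (\ref{schwarz}) by a lattice computation in $\Pic(S)=\ZZ A\oplus\ZZ B$, reducing to $D=A$ and excluding it by $H^0(\E_x(-A))=0$ (which, strictly, needs the sequence $0\to\mathcal{O}(B-A)\to\E_x(-A)\to I_x\to 0$ on top of Lemma \ref{a-b} b), as in (\ref{h0-exa})). Your computation checks out, and it has the merit of being independent of the curve-level argument, but it is strictly more work than the paper's restriction trick; logically the direct stability of $\E_x$ is not needed once $E_x$ is known to be stable. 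Either order is fine, and the memberships $[\E_x]\in M_v(S)$, $[E_x]\in M_C(2,K_C,s)$ follow in both treatments from (\ref{dim-ex}) and (\ref{dim-e-x1}).
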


\proof   Assume first that $x\notin C$. 
We are then in case (\ref{restr-schwarz1}). By (\ref{dim-e-x1}) the sequence cannot split. By Lemma \ref{dest},
if $E_x$ is not stable then it sits in an exact sequence
$$
0\to \eta(-p)\to E_x\to \xi(p)\to 0
$$
for some $p\in C$ and we have $H^0(C, E_x)\cong H^0(C, \xi(p))\oplus H^0(C,  \eta(-p))=H^0(C, \xi)\oplus H^0(C,  \eta(-p))$. Thus the codimension one subspace of  $H^0(C,  \eta)$ is $H^0(C,  \eta(-p))$.
But then $x=p\in C$ which is a contradiction.

Let then $x\in C$. Then we are in case (\ref{restr-schwarz-x}). By Lemma \ref{non-split-x}, this sequence cannot split.
The stability of  $E_x$ now  follows immediately from Lemma \ref{dest}.

The stability of $\E_x$ is now clear. If $L$ is a destabilizing subsheaf of $\E_x$ then  $L\cdot C\geq C^2/2$.
But then $L_{|C}$ would destabilize $E_x$. The last assertion is a consequence of   (\ref{dim-ex}) and (\ref{dim-e-x1}).
\endproof

\begin{rem}\rm
From the preceding arguments we learned that points $x$ in the quadratic hull of $C\subset \PP^s$, not belonging to $C$, correspond to
 extensions of type  (\ref{restr-schwarz1})  where $E_x$ is a stable bundle.
 On the other hand, points $x$ belonging to $C$ correspond to 
 extensions of type
  $$
a)\qquad  0\to \xi\to E'_x\to \eta\to 0
 $$
where $E'_x$  is  destabilized by $\eta(-x)$.
 Finally, if $D$ is the divisor of a section of $\eta\xi^{-1}$, a point $x\in \supp(D)$, corresponds to an extension of type:
 $$
b)\qquad 0\to \xi\to E''_x=\eta(-x)\oplus\xi(x)\to \eta\to 0
 $$
 and $E''_x$ is clearly unstable. In both cases a) and b) the ''stable limit''
 replacing $E'_x$ (resp. $E''_x$) is $E_x$ as in  (\ref{restr-schwarz-x}) 
 
\end{rem}

We next come to:

\begin{prop}\label{S-to-M} The   map 
\be
\aligned
\rho: S& \longrightarrow M_v(S)\\
&x\mapsto [\E_x]
\endaligned
\ee
is an isomorphism of K3 surfaces.
\end{prop}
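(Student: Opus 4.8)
The plan is to realize $\rho$ as the classifying morphism of a flat family of stable sheaves on $S$ parametrized by $S$, to prove injectivity by reconstructing the point $x$ intrinsically from the isomorphism class of $\E_x$, and then to invoke that a bijective morphism between smooth projective surfaces is an isomorphism.

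First I would produce $\rho$ as an honest morphism. On $S\times S$, with projections $p_1,p_2$ and diagonal $\Delta$, I would consider the flat family $p_2^*\mathcal{O}_S(A)\otimes \mathcal{I}_\Delta$, whose restriction to $\{x\}\times S$ is $I_x(A)$, together with the constant family $p_2^*\mathcal{O}_S(B)$. A standard computation with the structure sequence of $x$ and the vanishing $H^\bullet(S,\mathcal{O}(B-A))=0$ from Lemma \ref{a-b} b) gives, for every $x$, that $\Hom(I_x(A),\mathcal{O}_S(B))=0$ and $\Ext^1(I_x(A),\mathcal{O}_S(B))\cong\CC$; hence by cohomology and base change the relative $\mathcal{E}xt^1_{p_1}$ is a line bundle $\mathcal{L}$ on the parameter $S$ while $\mathcal{E}xt^0_{p_1}=0$. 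This is exactly the input needed to form a universal extension
$0\to p_2^*\mathcal{O}_S(B)\otimes p_1^*\mathcal{L}^{-1}\to\mathcal{E}\to p_2^*\mathcal{O}_S(A)\otimes \mathcal{I}_\Delta\to 0$
on $S\times S$ whose fibre over $x$ is isomorphic to $\E_x$. By Proposition \ref{stab-ex} every fibre is $[C]$-stable with Mukai vector $v$, so $\mathcal{E}$ is a flat family of $[C]$-stable sheaves on $S$, and corepresentability of $M_v(S)$ yields the classifying morphism $\rho$.

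Next I would prove injectivity, which is the conceptual core. The idea is to recover $x$ from the isomorphism class of $\E_x$ alone. Twisting the defining sequence (\ref{schwarz}) by $-B$ gives $0\to\mathcal{O}_S\to\E_x(-B)\to I_x(A-B)\to 0$, and since $H^0(S,\mathcal{O}(A-B))=0$ (Lemma \ref{a-b} b)) one obtains $H^0(S,\E_x(-B))=\CC$. Thus the inclusion $\mathcal{O}_S(B)\hookrightarrow\E_x$ is intrinsic: it is the image of the unique-up-to-scalar global section of $\E_x(-B)$, and its quotient is the torsion-free sheaf $I_x(A)$. Any isomorphism $\E_x\cong\E_y$ therefore carries the canonical sub $\mathcal{O}_S(B)$ to $\mathcal{O}_S(B)$ and induces $I_x(A)\cong I_y(A)$, whence $I_x\cong I_y$ and $x=y$. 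So $\rho$ is injective.

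Finally I would conclude. Since $S$ is projective and $\rho$ is injective, $\rho(S)$ is closed of dimension $2$ inside the irreducible surface $M_v(S)$, so $\rho$ is surjective, hence bijective; being proper and quasi-finite it is finite, and being bijective in characteristic zero it is birational. A finite birational morphism onto a normal variety is an isomorphism by Zariski's Main Theorem, and $M_v(S)$ is smooth (Lemma \ref{L:nowalls1}); equivalently, a birational morphism between the minimal smooth surfaces $S$ and $M_v(S)$ that contracts no curve is an isomorphism. I expect the construction of the universal extension, with its flatness and base-change bookkeeping, to be the most technical step, whereas injectivity, though it is the geometric heart of the statement, follows cleanly from the vanishing in Lemma \ref{a-b} b).
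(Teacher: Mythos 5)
Your proposal is correct and follows essentially the same route as the paper: both construct $\rho$ via a universal extension over $S\times S$ (using $\Ext^1(I_x(A),\mathcal{O}_S(B))\cong\CC$ and base change), both reduce to injectivity because $S$ and $M_v(S)$ are smooth projective K3 surfaces, and both prove injectivity from $h^0(S,\E_x(-B))=1$ via the sequence $0\to\mathcal{O}_S\to\E_x(-B)\to I_x(A-B)\to 0$ and Lemma \ref{a-b} b). The only differences are expository: you spell out the base-change bookkeeping and the Zariski-Main-Theorem conclusion that the paper leaves implicit.
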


\proof   We follow again  Mukai's line of reasoning \cite{Mukai2} (p. 194, before Lemma 8, and p. 195 after Lemma 11). The local to global spectral sequence of ext gives
$$
H^j(S, \E xt^i_{\mathcal{O}_S}(I_x(A),\mathcal{O}_S(B))) \Rightarrow \Ext^{i+j}_{\mathcal{O}_S}(I_x(A),\mathcal{O}_S(B))
$$
By Lemma \ref{a-b} the natural map
\be\label{loc-glob}
\Ext^{1}_{\mathcal{O}_S}(I_x(A),\mathcal{O}_S(B))\longrightarrow H^0(S,  \E xt^1_{\mathcal{O}_S}(I_x(A),\mathcal{O}_S(B)))\cong\CC
\ee
is an isomorphism so that the extension (\ref{schwarz}) is the unique non trivial extension of $I_x(A)$ by $\mathcal{O}_S(B)$.  Now one can perform a relative version of this construction. We let $T$ be a copy of $S$ and $\Delta$ be the diagonal of $S\times T$. We have an isomorphism
$$
\E xt^{1}_{\mathcal{O}_{S\times T}}(I_\Delta(p^*A),\mathcal{O}_{S\times T}(q^*B))\longrightarrow q_*\E xt^1_{\mathcal{O}_{S\times T}}(I_\Delta(p^*A),\mathcal{O}_{S\times T}(q^*B)))\cong\mathcal{O}_T(B-A)
$$
which is a relative version of (\ref{loc-glob}). We then have a universal extension

\be\label{univ-ext}
0\to\mathcal{O}_{S\times S}(p^*B)\to \F\to I_\Delta(p^*A+q^*(B-A))\to 0
\ee
whose restriction to $S\times\{x\}$ is (\ref{schwarz}). This gives a well  defined morphism
$$
\aligned
\rho: S&\to M_v(S)\\
&x\mapsto [\E _x]
\endaligned
$$
As $S$ and $M_v(S)$ are smooth K3 surfaces,  to prove that $\rho$ is an isomorphism  it suffices to show that
it is injective and for this it suffices to show that 
$$
\dim\Hom(O(B), \E_x)=1\,,\qquad \text{i.e}\quad  h^0(S, \E_x(-B))=1
$$
But this follows readily from the exact sequence
$$
0\to \mathcal{O}_S\to \E_x(-B)\to I_x(A-B)\to 0
$$
\endproof

\begin{cor}\label{rhork2}
 $\sigma: M_v(S)\longrightarrow M_C(2,K_C,s)$ is well defined. 
\end{cor}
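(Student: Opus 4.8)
The plan is to deduce the corollary directly from Propositions \ref{S-to-M} and \ref{stab-ex}, which together give a complete description of $M_v(S)$ in the Rank-2 case. By Proposition \ref{S-to-M} the morphism $\rho: S \to M_v(S)$, $x \mapsto [\E_x]$, is an isomorphism; hence every point of $M_v(S)$ is of the form $[\E_x]$ for a unique $x \in S$. By Proposition \ref{stab-ex} each restriction $E_x = {\E_x}_{|C}$ is a stable bundle with $[E_x]\in M_C(2,K_C,s)$, the Brill-Noether condition $h^0(C,E_x)=s+2$ being furnished by (\ref{dim-e-x1}). Therefore $[\E_{|C}]$ lands in $M_C(2,K_C,s)$ for every $[\E]\in M_v(S)$, so $\sigma$ is at least well defined as a map of sets.

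It remains to check that $\sigma$ is a morphism of varieties. Here I would use the universal extension (\ref{univ-ext}) on $S\times S$ produced in the proof of Proposition \ref{S-to-M}. Restricting the family $\F$ to $C\times S$ (where $C$ sits in the first factor) yields a flat family of stable rank-two bundles with canonical determinant on $C$, parametrized by the second copy of $S$, whose fibre over $x$ is $E_x$. By the modular property of $M_C(2,K_C)$ this family induces a morphism $S\to M_C(2,K_C,s)$ agreeing with $\sigma\rho$ on points; it factors through the closed Brill-Noether locus $M_C(2,K_C,s)$ precisely because every fibre satisfies $h^0\geq s+2$. Composing with the inverse of the isomorphism $\rho$ gives $\sigma=(\sigma\rho)\circ\rho^{-1}$ as a morphism of K3 surfaces.

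I expect no serious obstacle, since all the analytic content---the stability of every $\E_{|C}$ and the uniform dimension count $h^0(C,E_x)=s+2$---has already been established. The only point requiring a little care is the flatness of the restricted family along $C\times S$, together with the fact that the locus $x\in C$ does not cause the cohomology of $E_x$ to jump: this is exactly the content of the two distinct extension shapes (\ref{restr-schwarz1}) and (\ref{restr-schwarz-x}), which both yield stable bundles of the correct cohomology by Proposition \ref{stab-ex}. Alternatively, as noted in Remark \ref{stab_voisin}, one could run the Rank-1 argument of Proposition \ref{s_to_c} verbatim, replacing the use of $\Pic(S)=\ZZ\cdot[C]$ by the Hodge Index Theorem to bound the numerical invariants of a would-be destabilizing subsheaf; but the route through $\rho$ is shorter now that Proposition \ref{S-to-M} is available.
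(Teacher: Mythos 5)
Your proposal is correct and follows exactly the paper's route: the paper's proof of Corollary \ref{rhork2} is precisely the one-line observation that it is an immediate consequence of Propositions \ref{stab-ex} and \ref{S-to-M}, since $\rho$ being surjective means every point of $M_v(S)$ is an $[\E_x]$ and every restriction $E_x$ lies in $M_C(2,K_C,s)$. Your additional remarks on algebraicity via the universal extension (\ref{univ-ext}) are a reasonable elaboration of what the paper leaves implicit.
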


\proof The corollary is an immediate consequence of Propositions \ref{stab-ex} and \ref{S-to-M}.
\endproof

\begin{prop}\label{sigma-bij}
$\sigma$ is bijective.
\end{prop}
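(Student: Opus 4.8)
The plan is to exploit the isomorphism $\rho\colon S\to M_v(S)$ of Proposition \ref{S-to-M}: since $\sigma$ is bijective if and only if the composite $\sigma\rho\colon S\to M_C(2,K_C,s)$, $x\mapsto [E_x]$, is bijective, I would prove bijectivity of $\sigma\rho$ directly, invoking Corollary \ref{rhork2} and Proposition \ref{stab-ex} for well-definedness and for the stability of the $E_x$.

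For injectivity I would use Mukai's Lemma \ref{mukai2}(2). By Remark \ref{consq-a-b} the multiplication map (\ref{proj-norm-eta}) is surjective, so the non-split extensions $0\to\xi\to E\to\eta\to 0$ with $h^0(E)=h^0(\xi)+h^0(\eta)-1=s+2$ are parametrized bijectively by the quadratic hull of $\Phi_{|\eta|}(C)\subset\PP^s$; since $\eta=\mathcal{O}(A)_{|C}$, this quadratic hull coincides with that of the image $T$ of $S$, which by Remark \ref{consq-a-b} is $S$ itself. For $x\notin C$ the extension (\ref{restr-schwarz1}) is exactly the one attached to $x$, and $x$ is recovered intrinsically as the codimension-one subspace $\mathrm{Im}(H^0(E_x)\to H^0(\eta))$; for $x\in C$ the sequence (\ref{restr-schwarz-x}) exhibits $\xi(x)$ as a sub-line bundle and $x$ is recovered as its twisting point. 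Since the distinguished sub-line bundle has degree $s+1$ when $x\notin C$ and degree $s+2$ when $x\in C$, the two families cannot collide, and injectivity follows.

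The substantial part, and the main obstacle, is surjectivity: given a stable $[F]\in M_C(2,K_C,s)$ I must show $F\cong E_x$ for some $x\in S$. First I would pin down $h^0(F)=s+2$. Then I would produce a sub-line bundle $N\subset F$ that is a pencil (i.e.\ $h^0(N)\ge 2$) of degree at most $s+2$; here the inequality $h^0(F)\le h^0(N)+h^0(K_CN^{-1})$ together with a Clifford-type estimate governed by Lemma \ref{geq-3} should both force the existence of such an $N$ and bound its degree. By Proposition \ref{petri_rank2} any such pencil is either $\xi$, some $\xi(p)$, or a base-point-free $g^1_{s+2}$. In the first two cases Lemma \ref{dest} applies directly and Mukai's Lemma \ref{mukai2} identifies $F$ with the extension attached to a point of the quadric hull, i.e.\ with some $E_x$; in the third case $F$ is realized as a Voisin bundle via (\ref{voisin_seq}), which one then matches with an $E_x$ using that the same quadric hull parametrizes these extensions. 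The delicate point throughout will be guaranteeing that a sub-pencil of degree $\le s+2$ exists and excluding spurious low-degree pencils, for which Lemma \ref{geq-3} is the decisive tool; this is exactly where the negativity of the expected dimension of $M_C(2,K_C,s)$ makes the argument longer than in Mukai's genus-$11$ case.
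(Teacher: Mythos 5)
Your overall strategy matches the paper's (reduce to the extension structure coming from $\xi$, use Mukai's Lemma \ref{mukai2} together with the identification of the quadratic hull of $\Phi_{|\eta|}(C)$ with $S$, and use Lemma \ref{geq-3} to bound the twisting divisor), but there are two genuine gaps. For injectivity you recover $x$ from $E_x$ as the codimension-one subspace $\mathrm{Im}(H^0(E_x)\to H^0(\eta))$; but this subspace depends on the choice of the surjection $E_x\to\eta$, i.e.\ on the choice of the sub-line bundle $\xi\hookrightarrow E_x$. If $\dim\Hom(\xi,E_x)>1$, a single bundle could admit several extension presentations and hence correspond to several points of the quadratic hull, and your ``intrinsic recovery'' of $x$ would not be well defined on isomorphism classes. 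The entire content of the paper's injectivity argument is precisely the computation $\dim\Hom(\xi,E_x)=h^0(C,E_x\xi^{-1})=1$, obtained by identifying $H^0(C,E_x\xi^{-1})$ with $H^0(S,\E_x(-B))$ and computing the latter from the sequence $0\to\mathcal{O}_S\to\E_x(-B)\to I_x(A-B)\to 0$ together with Lemma \ref{a-b}. You omit this step entirely.

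For surjectivity, the mechanism you propose for producing a sub-pencil $N\subset F$ does not work as stated: the inequality $h^0(F)\le h^0(N)+h^0(K_CN^{-1})$ presupposes that a sub-line bundle $N$ has already been found and cannot ``force its existence.'' The paper instead applies Mukai's Lemma \ref{Mukai_lem1} with $\zeta=\xi$ (globally generated of degree $s+1$), which gives $\dim\Hom(\xi,F)\ge h^0(F)-\deg\xi\ge 1$ directly; any nonzero map yields $0\to\xi(D)\to F\to\eta(-D)\to 0$, stability forces $\deg D\le s-1$, and the argument of Lemma \ref{dest} (via Lemma \ref{geq-3}) forces $\deg D\le 1$. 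This also eliminates your third case: the distinguished sub-line bundle is always $\xi$ or $\xi(p)$, and $\xi(p)$ has $p$ as a base point, so no base-point-free $g^1_{s+2}$ and no appeal to Voisin bundles or to Proposition \ref{petri_rank2} enters the argument. With these two repairs your outline becomes essentially the paper's proof.
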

 
\proof  
- $\sigma$ is injective.

Clearly what we have to prove is that $\dim \Hom(\xi,  E_x)=1$, or in other words that $h^0(C, E_x\xi^{-1})=1$.
From  the exact sequence
$$
0\to \mathcal{O}_S(-B-C)\to \E_x(-B)\to E_x\xi^{-1}\to 0
$$
 we get
$$
H^0(S, \E_x(-B))\cong H^0(C, E_x\xi^{-1})
$$
From the sequence
$$
0\to \mathcal{O}_S\to \E_x(-B)\to I_x(A-B)\to 0
$$
we get $H^0(S, \E_x(-B))\cong\CC$.
\vskip 0.3 cm
- $\sigma$ is surjective.

Let $[E]\in M_C(2,K_C,s)$. Let us recall Mukai's Lemma 1 in 
\cite{Mukai2}. Again, we include its statement for the convenience of the reader.

\begin{lem}\label{Mukai_lem1}(Mukai) Let $E$ be a rank two vector bundle of canonical determinant $\zeta$ a line bundle on $C$. If $\zeta$ is generated by global sections, then we have
$$
\dim \Hom_{\mathcal{O}_C}(\zeta, E)\geq h^0(E)-\deg\zeta
$$
\end{lem}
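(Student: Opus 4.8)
The plan is to reduce the statement to a cohomological inequality for two twists of $E$, and then to close the gap using the canonical-determinant symmetry together with the base-point-free pencil trick. First I would rewrite the left-hand side as
\[
\dim\Hom_{\mathcal{O}_C}(\zeta,E)=h^0(C,E\otimes\zeta^{-1}),
\]
which is legitimate because $\zeta$ is a line bundle. The naive move — choosing a section of $\zeta$ and restricting along its divisor — only yields $h^0(E\otimes\zeta^{-1})\ge h^0(E)-2\deg\zeta$, because the torsion cokernel $E|_D$ has length $2\deg\zeta$; the factor two is exactly what must be beaten, and beating it is where the hypothesis $\det E=K_C$ has to enter.

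The key is that in rank two the canonical-determinant condition gives $E^\vee\cong E\otimes K_C^{-1}$. I would use this in two ways. By Serre duality on $C$ one has $h^1(E\otimes\zeta^{-1})=h^0(E^\vee\otimes\zeta\otimes K_C)=h^0(E\otimes\zeta)$, while Riemann--Roch gives $\chi(E\otimes\zeta^{-1})=(2g-2-2\deg\zeta)+2(1-g)=-2\deg\zeta$. Combining these yields the clean relation
\[
h^0(E\otimes\zeta^{-1})=h^0(E\otimes\zeta)-2\deg\zeta,
\]
so the desired bound is equivalent to $h^0(E\otimes\zeta)\ge h^0(E)+\deg\zeta$. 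Next I would invoke global generation: since $\zeta$ is globally generated, either $\deg\zeta=0$ (whence $\zeta\cong\mathcal{O}_C$ and the statement is trivial) or $\deg\zeta\ge1$ forces $h^0(\zeta)\ge2$, so $\zeta$ carries a base-point-free pencil $V\subset H^0(\zeta)$. The base-point-free pencil trick provides the exact sequence $0\to\zeta^{-1}\to\mathcal{O}_C^{\,2}\to\zeta\to 0$, and tensoring by $E$ and taking cohomology identifies the kernel of $V\otimes H^0(E)\to H^0(E\otimes\zeta)$ with $H^0(E\otimes\zeta^{-1})$. This gives $h^0(E\otimes\zeta)\ge 2h^0(E)-h^0(E\otimes\zeta^{-1})$.

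Finally I would just feed the two relations into each other: writing $x=h^0(E\otimes\zeta)$ and $y=h^0(E\otimes\zeta^{-1})$, the Serre-duality relation reads $y=x-2\deg\zeta$ and the pencil-trick inequality reads $x\ge 2h^0(E)-y$; substituting gives $2x\ge 2h^0(E)+2\deg\zeta$, hence $y=x-2\deg\zeta\ge h^0(E)-\deg\zeta$, which is exactly $\dim\Hom_{\mathcal{O}_C}(\zeta,E)\ge h^0(E)-\deg\zeta$. The step I expect to be the genuine point of the argument — rather than a routine computation — is recognizing that one should pass to $E\otimes\zeta$ and symmetrize: the canonical determinant is what makes $h^0(E\otimes\zeta^{-1})$ and $h^0(E\otimes\zeta)$ Serre-dual partners, and only this symmetry turns the lossy factor $2\deg\zeta$ from the pencil trick into the sharp factor $\deg\zeta$. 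The only hypothesis-checking subtlety is ensuring the base-point-free pencil exists, which global generation together with $\deg\zeta\ge1$ guarantees.
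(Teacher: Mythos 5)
Your argument is correct and complete: the identity $h^0(E\otimes\zeta^{-1})=h^0(E\otimes\zeta)-2\deg\zeta$ (from $E^\vee\cong E\otimes K_C^{-1}$, Serre duality and Riemann--Roch) combined with the base-point-free pencil trick inequality $h^0(E\otimes\zeta)\ge 2h^0(E)-h^0(E\otimes\zeta^{-1})$ does yield the stated bound, and you correctly dispose of the degenerate case $\deg\zeta=0$. Note that the paper itself gives no proof here --- it only quotes the statement from Mukai's genus-eleven paper --- and your argument is essentially the standard one behind Mukai's Lemma 1, so there is nothing to correct.
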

Since $h^0(E)\geq s+2$, by the preceding lemma, there must be an exact sequence
$$
0\to \xi(D)\to E\to\eta(-D)\to 0
$$
for some effective divisor $D$ of degree $d$ on $C$. Since $E$ is stable we must have
$$
 \deg(\xi(D))=s+1+d\leq \deg(E)/2=2s\,,
 $$
i.e. $d \le s-1$. But then, as in the proof of Lemma \ref{dest}, we deduce that $d \le 1$. Two cases can occur. Either:
\[
0 \longrightarrow \xi(p) \longrightarrow E \longrightarrow \eta(-p) \longrightarrow 0
\]
or
\[
0 \longrightarrow \xi \longrightarrow E \longrightarrow \eta \longrightarrow 0
\]
Then one concludes exactly as in Mukai's paper \cite{Mukai2} (pp. 195-196) by using Lemma \ref{Mukai_lem1} as follows.
In the first case $E \cong E_p$ because the extension does not split and is unique. In the second case the coboundary 
$$
H^0(C, \eta)\to H^1(C, \xi)
$$ 
has rank one. We then apply point 2) in Lemma \ref{mukai2} together with the fact that, by (\ref{proj-norm-eta1}), the quadratic hull of $\Phi_{|\eta|}(C)$ is exactly $S$. We thus find a point 
$x\in S$
such that $H^0(S, I_xA)= \mathrm{Im}[H^0(S, E)\longrightarrow H^0(C, \eta)=H^0(S, A)]$. By the uniqueness again we have $E=E_x=\E_{x_{|C}}$. 
\endproof
\vskip 0.5 cm

\begin{rem}\label{stab_voisin2} \rm The two vector bundles $\wt E_L$ and $E_L$
are stable also in the Rank-2 case, for every choice of a base-point-free pencil $|L|$ of degree $s+2$.

The proof of this fact runs as follows. By Theorem \ref{caso(s+1)gon}  it is enough to prove that $\wt E_L$ is stable. Suppose not, and let 
 $N$ be a subsheaf of $\wt E_L$ with slope greater or equal than $2s=\mu_C(\wt E_L)$. Then $\alpha=N_{|C}$ destabilizes $E_L$. On the other hand, by Mukai's Lemma \ref{Mukai_lem1}, $\Hom(\xi, E_L)\neq 0$ and
we have an exact sequence 
$$
0\to\xi(D)\to E_L\to \eta(-D)\to 0
$$
for some positive divisor $D$. We may write (\ref{ineq1}) with $E$ replaced by $E_L$ and conclude, in exactly the same way, 
that $\deg(D)\leq 1$. We can then proceed as in the proof of Lemma \ref{dest} and prove that either $\alpha=\eta$ or $\alpha=\eta(-p)$. On the other hand, we have an exact sequence
$$
0\to L\to E_L\to K_CL^{-1}\to 0
$$
We must then have either $h^0(C, L\alpha^{-1})\neq0$ or $h^0(C, K_CL^{-1}\alpha^{-1})\neq0$.
For degree reasons, the only possibility is that $\alpha=\eta(-p)$ and $h^0(C, K_CL^{-1}\alpha^{-1})\neq0$.
This implies that $L=\xi(p)$ but then  $L$ can  not be base-point-free. This contradiction proves our claims.
\end{rem}

 Next, we  prepare the ground for the proof of the last step.
From the exact sequence (\ref{schwarz}) we deduce the following exact sequences

\be\label{ex_s2ab}
0\to \U\to S^2\E_x\to \mathcal{O}_S(I_x^2A^2)\to 0
\ee

\be\label{calu}
0\to \mathcal{O}_S(2B)\to \U\to \mathcal{O}_S(I_x(A+B))\to 0
\ee
In particular
$$
\U\cong\E_x(B)
$$
We also have

\be\label{ex_S2ab}
0\to U\to S^2H^0(\E_x)\to S^2H^0(I_xA)\to 0
\ee

\be\label{U}
0\to S^2H^0(S,B)\to U\to H^0(B)\otimes H^0(I_xA)\to 0
\ee

\begin{lem}\label{inj-ex-ex} $H^0(S, S^2\E_x(-C))=0$
\end{lem}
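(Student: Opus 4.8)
The plan is to compute $H^0(S,S^2\E_x(-C))$ by twisting the two filtration sequences (\ref{ex_s2ab}) and (\ref{calu}) by $\mathcal{O}_S(-C)$ and chasing global sections, reducing every contribution to a vanishing already recorded in Lemma \ref{a-b} b) together with the triviality $H^0(S,I_x)=0$. Concretely, tensoring (\ref{ex_s2ab}) with $\mathcal{O}_S(-C)$ gives a short exact sequence
$$0\to \U(-C)\to S^2\E_x(-C)\to I_x^2\otimes\mathcal{O}_S(A-B)\to 0,$$
using $C=A+B$ so that $\mathcal{O}_S(2A-C)=\mathcal{O}_S(A-B)$. On global sections this shows it suffices to prove $H^0(\U(-C))=0$ and $H^0(I_x^2(A-B))=0$.

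The second vanishing is immediate: since $I_x^2\subset\mathcal{O}_S$, one has $H^0(S,I_x^2(A-B))\subseteq H^0(S,\mathcal{O}_S(A-B))$, and the latter is zero by Lemma \ref{a-b} b).

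For the first, I would use the identification $\U\cong\E_x(B)$ noted just before the statement, so that $\U(-C)\cong\E_x(-A)$. Twisting the defining sequence (\ref{schwarz}) by $\mathcal{O}_S(-A)$ yields
$$0\to\mathcal{O}_S(B-A)\to\E_x(-A)\to I_x\to 0,$$
whence $H^0(\E_x(-A))$ sits between $H^0(\mathcal{O}_S(B-A))$ and $H^0(S,I_x)$. Both of these vanish: $H^0(\mathcal{O}_S(B-A))=0$ by Lemma \ref{a-b} b), and $H^0(S,I_x)=0$ since the only global function on $S$ vanishing at $x$ is the zero function. This forces $H^0(\U(-C))=0$ and, combined with the previous paragraph, $H^0(S,S^2\E_x(-C))=0$.

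There is no real obstacle here; the argument is a routine diagram chase, and its only genuine input is the pair of vanishings $h^0(\mathcal{O}(A-B))=h^0(\mathcal{O}(B-A))=0$ supplied by Lemma \ref{a-b} b). One can equally well bypass the identification $\U\cong\E_x(B)$ by twisting (\ref{calu}) directly by $\mathcal{O}_S(-C)$, which produces the same sequence $0\to\mathcal{O}_S(B-A)\to\U(-C)\to I_x\to 0$.
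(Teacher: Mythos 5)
Your argument is correct and is essentially identical to the paper's own proof: both twist (\ref{ex_s2ab}) by $\mathcal{O}_S(-C)$ to reduce to the vanishing of $H^0(S,I_x^2(A-B))$ and $H^0(S,\E_x(-A))$, and both dispose of the latter via the sequence $0\to\mathcal{O}_S(B-A)\to\E_x(-A)\to I_x\to 0$ together with Lemma \ref{a-b} b). No gaps.
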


\proof We have  an exact sequence 

\be\label{s2ex-c}
0\to (S^2\E_x)(-C)\to S^2\E_x\to S^2E_x\to0
\ee
On the other hand we have an exact sequence
\be
0\to \E_x(-A)\to S^2\E_x(-C)\to \mathcal{O}_S(I_x^2(A-B))\to 0
\ee
By Lemma (\ref{a-b})  we have  $H^0(I_x^2(A-B))=0$ and we see that

\be\label{h0-exa}
H^0(\E_x(-A))=0
\ee
 by looking at the exact sequence
\be
0\to \mathcal{O}_S(B-A)\to \E_x(-A)\to I_x\to 0
\ee
\endproof

\begin{lem}\label{a-b-varie}
a) $S^2H^0(S, \mathcal{O}_S(B))\to H^0(S, \mathcal{O}_S(2B))$ is an isomorphism

b) $H^0(S, \mathcal{O}_S(B)\otimes H^0(S, I_x(A))\to  H^0(S, I_x(A+B))$ is injective

c) $F:\,S^2H^0(S, I_x(A))\to H^0(S, I^2_x(2A)$ is surjective
\end{lem}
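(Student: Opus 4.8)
The plan is to handle the three multiplication maps by different means: a) and b) by elementary divisorial arguments on $S$, and c) by reinterpreting $F$ projectively and invoking the quadric generation of $I_S$ recorded in Remark \ref{consq-a-b}. I first record the intersection numbers $A^2=2s-2$, $B^2=0$, $A\cdot B=s+1$, and $H^0(S,\mathcal{O}_S(B))$ has dimension $2$.

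For a), the key observation is that $|B|$ is the elliptic pencil: since $g(B)=1$ and $B^2=0$, the system $|B|$ is base-point-free (Saint-Donat) and defines an elliptic fibration $f\colon S\to\PP^1$ with $\mathcal{O}_S(nB)=f^*\mathcal{O}_{\PP^1}(n)$ for all $n\ge 0$. Hence $H^0(S,\mathcal{O}_S(nB))=f^*H^0(\PP^1,\mathcal{O}_{\PP^1}(n))$ and the map in a) is the pullback of $S^2H^0(\PP^1,\mathcal{O}_{\PP^1}(1))\to H^0(\PP^1,\mathcal{O}_{\PP^1}(2))$, which is an isomorphism. Equivalently, for a basis $b_0,b_1$ of $H^0(S,\mathcal{O}_S(B))$ both spaces are $3$-dimensional and $b_0^2,b_0b_1,b_1^2$ are independent, a relation among them being a binary quadratic form vanishing on $f(S)=\PP^1$.

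For b) I would prove the stronger statement that $H^0(S,\mathcal{O}_S(B))\otimes H^0(S,\mathcal{O}_S(A))\to H^0(S,\mathcal{O}_S(A+B))$ is injective; restricting the second factor to $H^0(S,I_x(A))$ then yields b). Write a kernel element as $b_0\otimes t_0+b_1\otimes t_1$, so that $b_0t_0=-b_1t_1$. The divisors $\operatorname{div}(b_0)$ and $\operatorname{div}(b_1)$ are distinct members of the base-point-free pencil $|B|$, hence disjoint since $B^2=0$. Comparing $\operatorname{div}(b_0)+\operatorname{div}(t_0)=\operatorname{div}(b_1)+\operatorname{div}(t_1)$ and using this disjointness forces $\operatorname{div}(b_0)\le\operatorname{div}(t_1)$, so $t_1=b_0\,u$ with $u\in H^0(S,\mathcal{O}_S(A-B))$. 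As $H^0(S,\mathcal{O}_S(A-B))=0$ by Lemma \ref{a-b} b), we get $t_1=0$ and then $t_0=0$, proving injectivity.

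For c) I would first reinterpret $F$ via the embedding $S\hookrightarrow\PP^s=\PP H^0(S,\mathcal{O}_S(A))^\vee$. Restriction identifies $H^0(S,I_x(A))$ with $H^0(\PP^s,\mathcal{I}_x(1))$, the linear forms through $x$; choosing coordinates with $x=[1:0:\cdots:0]$, the natural map $S^2H^0(\PP^s,\mathcal{I}_x(1))\to H^0(\PP^s,\mathcal{O}(2))$ is injective with image $H^0(\PP^s,\mathcal{I}_x^2(2))$, the quadrics double at $x$. Under these identifications $F$ becomes the restriction map $r\colon H^0(\PP^s,\mathcal{I}_x^2(2))\to H^0(S,I_{x,S}^2(2A))$. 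Given $w\in H^0(S,I_{x,S}^2(2A))$, projective normality (Remark \ref{consq-a-b} A) provides a quadric $Q$ with $Q|_S=w$; since $w$ is double at $x$ on $S$, one has $Q(x)=0$ and the differential $d_xQ\in T^*_x\PP^s$ lies in the conormal subspace $N^\vee_{S/\PP^s,x}=\ker(T^*_x\PP^s\to T^*_xS)$. The strategy is then to correct $Q$ by a quadric $Q'\supset S$ with $d_xQ'=d_xQ$, so that $Q-Q'$ restricts to $w$ and has vanishing $1$-jet at $x$, i.e. $Q-Q'\in H^0(\PP^s,\mathcal{I}_x^2(2))$.

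The main obstacle, and the crux of c), is exactly the spanning claim that the differentials $d_xQ'$ of quadrics $Q'\supset S$ fill out the conormal space $N^\vee_{S/\PP^s,x}$. This is where the hypothesis that $I_S$ is generated by quadrics (Remark \ref{consq-a-b} A) is indispensable: quadric generation means $\mathcal{I}_S(2)$ is globally generated, so the global quadrics through $S$ generate the stalk $\mathcal{I}_{S,x}$; since $S$ is smooth at $x$, passing to linear parts identifies these generators with a spanning set of $N^\vee_{S/\PP^s,x}$. This gives $\operatorname{coker}(r)=0$ and hence the surjectivity of $F$, consistently with the dimension count $\dim S^2H^0(S,I_x(A))=\binom{s+1}{2}\ge 4s-5=\dim H^0(S,I_{x,S}^2(2A))$ for $s\ge 5$.
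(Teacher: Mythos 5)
Your proposal is correct. For a) and b) the paper simply invokes the base-point-free pencil trick for the pencil $|B|$, whose cohomology sequence has kernel $H^0(S,\mathcal{O}(A-B))=0$ (resp.\ $H^0(S,\mathcal{O}_S)$) by Lemma \ref{a-b} b); your explicit arguments via the elliptic fibration and the divisor comparison $\operatorname{div}(b_0)+\operatorname{div}(t_0)=\operatorname{div}(b_1)+\operatorname{div}(t_1)$ are exactly that trick written out by hand, so there is no real difference there. For c) the underlying mechanism is the same as the paper's --- quadric generation of $I_S$ is used to produce a quadric containing $S$ with a prescribed conormal direction at $x$, which then corrects the $1$-jet of a lift --- but you organize it differently and more economically. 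The paper first restricts to a generic hyperplane section $A\not\ni x$, uses surjectivity of $S^2H^0(A,\omega_A)\to H^0(A,\omega_A^2)$ to reduce to elements of the form $s_A\cdot t$ with $t\in H^0(S,I_x^2(A))$, and then treats those by an explicit choice of coordinates and of a quadric $Q\in I_S(2)$ with $T_x(Q)=\{x_1=0\}$. You instead identify $S^2H^0(S,I_x(A))$ with $H^0(\PP^s,\mathcal{I}_x^2(2))$, lift $w$ in one step using projective normality of $S$ in degree $2$, and use the surjection $I_S(2)\to N^\vee_{S/\PP^s,x}$ --- which is precisely the coordinate-free content of the paper's computation, and the exact point where quadric generation enters --- to kill the linear part of the lift at $x$. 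This bypasses the auxiliary curve $A$ and Noether's theorem for $\omega_A$ entirely, at the negligible cost of the elementary identification $S^2H^0(\PP^s,\mathcal{I}_x(1))\cong H^0(\PP^s,\mathcal{I}_x^2(2))$; both routes are legitimate, and yours is arguably the cleaner of the two.
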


\begin{proof}
a) Follows from the base-point-free-pencil trick.

b) Follows again from  the base-point-free-pencil trick.

c) Let  $x\in S$.
Let $A$ be a generic hyperplane section of $S$ given by the equation $s_A=0$ and assume  $x\notin A$.
Consider the following commutative diagram
$$
\xymatrix{
&&&0\ar[d]\\
&0\ar[d]&&H^0(S, I_x^2(A))\cong\CC^{s-2}\ar[d]^{\cdot s_A}\\
0\ar[r]&\ker  (F)\ar[r]\ar[d]^h&S^2H^0(S,I_xA)\ar[d]^\cong\ar[r]^F&H^0(S, I_x^2A^2)\ar[d]^r\cong\CC^{4s-5}\\
0\ar[r]&\ker  (f)\ar[r]\ar[d]&S^2H^0(A,\omega_A)\ar[r]^f&H^0(A, \omega_A^2)\cong\CC^{3s-3}\ar[d]\ar[r]&0\\
&\operatorname{coker}(h)\ar[d]&&0\\
&0
}
$$
Consider an element   $s_A\cdot t$ with $t\in H^0(S, I_x^2A)$. We can choose coordinates $\{x_0,\dots, x_s\}$ 
so that$$
x=[1,0,\dots,0]\,,\qquad s_A=x_0\,,\qquad t=x_1\,,\qquad T_x(S)=\{x_1=\cdots=x_{s-2}=0\}
$$
To prove that  $s_A\cdot t$ lies in $\operatorname{Im}(F)$ one must find $\wt Q\in S^2H^0(S,I_xA)$, i.e.  a  quadric  which is singular in $x$, such that  $\wt Q_{|S}=(x_0x_1)_{|S}$.  In other words we must find a quadric $Q\in I_S(2)$  such that
$$
\wt Q= \lambda Q+\mu x_0x_1\,,\quad \mu\neq 0
$$
is  singular in $x$. We must  have
$$
0=\left(\frac{\partial \wt Q}{\partial x_j}\right)_{x}=\lambda\left(\frac{\partial  Q}{\partial x_j}\right)_{x}\,,\qquad j\neq 1\,,\qquad 0=\left(\frac{\partial \wt Q}{\partial x_1}\right)_{x}=\lambda\left(\frac{\partial  Q}{\partial x_1}\right)_{x}+\mu
$$
Since the ideal of $S$ is generated by quadrics (Remark \ref {consq-a-b}) we may 
choose $Q$ in $I_S(2)$ such that $T_x(Q)=\{x_1=0\}$, so that  
$$
Q=x_0x_1+\sum_{i\neq0, j\neq 0} b_{ij}x_ix_j
$$
We may then set $\wt Q=Q-x_0x_1$.

\end{proof}

\vskip 0.5 cm
We are now ready to prove:

 \begin{prop}\label{dim-tan}
For every $x\in S$ 
$$
\dim T_{[E_x]}(M_C(2,K_C,s))=2
$$
and the differential of $\sigma: M_v(S)\to
M_C(2,K_C,s)$ is an isomorphism at $[\E_x]$, for every $x\in S$.
\end{prop}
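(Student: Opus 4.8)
The plan is to deduce everything from Proposition \ref{L:rk1dsigma}. Since $\sigma$ is well defined in the Rank-2 case (Corollary \ref{rhork2}), it suffices to verify, for every $x\in S$, the three hypotheses of that proposition for $\E=\E_x$: namely (i) $H^1(S,S^2\E_x)=0$, (ii) $H^0(S,S^2\E_x(-C))=0$, and (iii) the surjectivity of $S^2H^0(\E_x)\to H^0(S^2\E_x)$. Part (2) of Proposition \ref{L:rk1dsigma} then yields simultaneously that $M_C(2,K_C,s)$ is nonsingular of dimension $2$ at $[E_x]$ and that $d\sigma$ is an isomorphism at $[\E_x]$, which are precisely the two assertions to be proved. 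Condition (ii) is exactly Lemma \ref{inj-ex-ex}, so only (i) and (iii) remain, and both will be reduced to a single vanishing.

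That vanishing is $H^1(S,\U)=0$, where $\U\cong\E_x(B)$. First I would push the computation onto the elliptic curve $B$. Because $B$ is an elliptic curve on a K3 surface, $\mathcal{O}_B(B)\cong\mathcal{O}_B$, so restricting the sequence $0\to\E_x\to\E_x(B)\to\E_x(B)|_B\to 0$ identifies $\E_x(B)|_B$ with $\E_x|_B$; since $H^1(S,\E_x)=H^2(S,\E_x)=0$ by (\ref{dim-ex}), this gives $H^1(S,\U)\cong H^1(B,\E_x|_B)$. Now $\E_x|_B$ is a rank-two bundle on $B$ of degree $C\cdot B=s+1$, so $\chi(\E_x|_B)=s+1$. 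On the other hand, the restriction sequence $0\to\E_x(-B)\to\E_x\to\E_x|_B\to 0$, together with $h^0(S,\E_x)=s+2$ from (\ref{dim-ex}), $h^0(S,\E_x(-B))=1$ (Proposition \ref{S-to-M}), and $H^1(S,\E_x(-B))=0$ (which follows from Lemma \ref{a-b}(b) and (\ref{h0-exa})), forces $h^0(B,\E_x|_B)=s+1$. Hence $h^1(B,\E_x|_B)=0$, i.e. $H^1(S,\U)=0$. All the inputs are cohomology groups on $S$ that are insensitive to the position of $x$, so the vanishing holds for every $x\in S$.

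Granting $H^1(S,\U)=0$, condition (i) is immediate: in (\ref{ex_s2ab}) the other quotient term $H^1(S,I_x^2(2A))$ also vanishes — the ideal of $S\subset\PP^s$ being generated by quadrics (Remark \ref{consq-a-b}), $|2A|$ separates $2$-jets at $x$ — so $H^1(S,S^2\E_x)$ is squeezed between two zeros. For condition (iii) I would run the diagram chase governed by the two filtrations: (\ref{ex_S2ab})--(\ref{U}) on the source and (\ref{ex_s2ab})--(\ref{calu}) on the target. The vanishing $H^1(S,\U)=0$ plays a double role. It makes $H^0(S^2\E_x)\to H^0(I_x^2(2A))$ surjective, so that the top graded piece of the filtered map is exactly the surjection $F$ of Lemma \ref{a-b-varie}(c). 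It also forces the connecting map of (\ref{calu}) to be nonzero, whence $h^0(\U)=\dim U$ and the map $U\to H^0(\U)$ is an isomorphism on associated graded by Lemma \ref{a-b-varie}(a),(b) (note that the middle graded piece of Lemma \ref{a-b-varie}(b), though only injective, becomes an isomorphism onto $\ker$ of that connecting map). Surjectivity on both graded pieces gives surjectivity of $S^2H^0(\E_x)\to H^0(S^2\E_x)$, which is (iii).

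With (i), (ii), (iii) established for every $x\in S$, Proposition \ref{L:rk1dsigma}(2) completes the proof. The main obstacle is the vanishing $H^1(S,\U)=0$: once it is known, (i) and (iii) are essentially formal, but it is exactly where the elliptic geometry of $B$ must enter — through $\mathcal{O}_B(B)\cong\mathcal{O}_B$ and the Euler-characteristic count on $B$ — and it is what makes the Brill-Noether dimension count come out sharp despite the expected dimension of $M_C(2,K_C,s)$ being nonpositive.
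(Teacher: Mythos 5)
Your proposal is correct and follows essentially the same route as the paper: verify conditions (i)--(iii) of Proposition \ref{L:rk1dsigma} for $\E_x$, reduce (i) and (iii) to the vanishing $H^1(S,\U)=0$ with $\U\cong\E_x(B)$, and run the two-step filtration diagram chase using Lemma \ref{a-b-varie}. The only (harmless) variations are at the micro level: you establish $H^1(B,\E_x|_B)=0$ by an Euler-characteristic count on the elliptic curve together with $h^1(\E_x(-B))=0$, where the paper instead uses $H^1(\E_x|_B)\cong H^2(\E_x(-B))\cong H^0(\E_x(-A))^\vee=0$; and for $H^1(S,I_x^2(2A))=0$ the very ampleness of $A$ already suffices (via the jet sequences, as in the paper), so the appeal to quadric generation of the ideal is unnecessary.
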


\proof   In view of  Proposition \ref{L:rk1dsigma} we must verify the following three conditions:
 \begin{itemize}
	\item[(i)]    $H^1(S, S^2\mathcal{E}_x)=0$.
	
	\item[(ii)] $H^0(S, S^2\mathcal{E}_x(-C))=0$.
	
	\item[(iii)]  $S^2H^0(S, \mathcal{E}_x)\longrightarrow H^0(S, S^2\mathcal{E}_x)$ is surjective.
	\end{itemize}

We start with $(iii)$.
Look at (\ref{ex_s2ab}), (\ref{ex_S2ab}) and (\ref{U}). We get a diagram

$$
\xymatrix{
0\ar[r]&U\ar[r]\ar[d]_u&S^2H^0(S,\E_x)\ar[d]^c\ar[r]^l&S^2H^0(S, I_xA)\ar[d]^F\ar[r]&0\\
0\ar[r]&H^0(S, \U)\ar[r]&H^0(S^2\E_x)\ar[r]^m&H^0(I_x^2A^2)\ar[d]\ar[r]&H^1(S, \U)\\
&&&0\\
}
$$
We will show that $u$ is an isomorphism. 
Let us first show that

\be\label{exb}
H^1(S, \U)=0\,,\qquad \text{where}\quad \U=\E_x(B)
\ee

For this we look at the sequence
$$
0\to\E_x\to \E_x(B)\to \E_x(B)_{|B}\to 0
$$
We get:  $H^1( \U) =H^1( \E_x(B))=H^1( \E_x(B)_{|B})=H^1( {\E_x}_{|B})$.
From the exact sequence
$$
0\to\E_x(-B)\to \E_x\to{ \E_x}_{|B}\to 0
$$
we get 
$$
H^1( { \E_x}_{|B})\cong H^2( \E_x(-B))
$$
However,  $H^2( \E_x(-B))=H^0( \E_x(-A))=0$ by (\ref{h0-exa}).
In conclusion: $H^1( \U)=0$.

 We now claim that $u$ is an isomorphism.
Consider the diagram
$$
\xymatrix{
0\ar[r]&S^2H^0(S, B)\ar[r]\ar[d]&U\ar[d]_u\ar[r]&H^0(S, B)\otimes H^0(S, I_xA)\ar[d]\ar[r]&0\\
0\ar[r]&H^0(S,2B)\ar[r]&H^0(S, \U)\ar[r]^-m&H^0(S, I_x(A+B))\ar[r]&H^1(S, 2B)\ar[r]& 0
}
$$

Since  $S^2H^0(B)\to H^0(2B)$ is an isomorphism and
$$
H^0(B)\otimes H^0(I_xA)\to H^0(I_x(A+B))
$$ 
is injective, the claim follows from a dimension count.

From Lemma \ref{a-b-varie}, we know that 
$F$ is surjective and so is  $Fl$ and therefore $m$ and thus $c$, proving $iii)$.

Item  $(ii)$ is Lemma \ref{inj-ex-ex}.

To prove  $(i)$ we look at the exact sequence
$$
0\to H^1(\U)=0\to H^1(S^2\E_x)\to H^1(I_x^2A^2)
$$
But now  $ H^1(I_x^2A^2)=0$ as it follows from the exact sequences
$$
0\to I_xA^2\to A^2\to A^2_{|x}\to 0\,,\qquad 0\to I_x^2A^2\to I_xA^2\to A^2_{|x}\otimes I_x/I_x^2\to 0
$$
and from the ampleness of $A$. 
 \endproof
 
The proof of Theorem \ref{caso(s+1)gon} is now complete.

%%%%%%%%%%%%%%%%%%%%%%%%%%%%%%%%%%%%%%%%%%%%%%%%%%%%%%%%%%%%%%%%%%%%%%%%%%%%%%%%%%%%%%%%%%

\section{Brill-Noether loci in the   Rank-1 case}\label{bn_vb1}

The purpose of this section is to prove the following:

\begin{theorem}\label{Tmain}
Let $(S,C)$ be a general pair   belonging to the Rank-1 case. 
There is a unique, generically smooth, 2-dimensional irreducible component  $V_C(2,K_C,s)$ of $M_C(2,K_C,s)$, containing the Voisin bundles $E_L$, with $L\in W^1_{s+2}(C)$,
such that 
  $\sigma$ induces   an isomorphism of $M_v(S)$ onto $V_C(2,K_C,s)_{red}$. In particular   $V_C(2,K_C,s)_{red}$ is a K3 surface.
\end{theorem}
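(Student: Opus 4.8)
The plan is to deduce the Rank-1 statement from the completely explicit Rank-2 situation of Theorem \ref{caso(s+1)gon} by a degeneration, transporting the good behaviour of $\sigma$ across the family by openness and properness. First I would choose a one-parameter family $(\mathcal{S},\mathcal{C})\to B$ over a smooth pointed curve $(B,0)$ whose central fibre $(\mathcal{S}_0,\mathcal{C}_0)$ lies in the Rank-2 case (such surfaces exist by the theorem of Knutsen cited above and form a special, Noether--Lefschetz type locus in the closure of the Rank-1 locus) and whose general fibre $(\mathcal{S}_t,\mathcal{C}_t)$ is a general Rank-1 pair. Since by Lemma \ref{nowalls} and Lemma \ref{L:nowalls1} the Mukai vector $v=(2,[C],s)$ is primitive and wall-free in both cases, all the fibres $M_v(\mathcal{S}_t)$ are nonempty (Lemma \ref{stab_pic_z}) smooth K3 surfaces, so the relative moduli space $\mathcal{M}_v=M_v(\mathcal{S}/B)\to B$ is, after shrinking $B$, a smooth proper family of K3 surfaces, carrying a universal sheaf by primitivity of $v$ (Mukai). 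The fibrewise restriction $\mathcal{E}\mapsto \mathcal{E}_{|\mathcal{C}}$ defines a relative morphism $\sigma\colon \mathcal{M}_v\to M_{\mathcal{C}/B}(2,K,s)$, well defined on every fibre by Proposition \ref{s_to_c} and Corollary \ref{rhork2}.

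The heart of the matter is that the three conditions (i), (ii), (iii) of Proposition \ref{L:rk1dsigma} are open in families: they amount to semicontinuous vanishings of $h^1(S^2\mathcal{E})$ and $h^0(S^2\mathcal{E}(-C))$ together with surjectivity of the relative multiplication map $S^2\pi_*\mathcal{E}\to \pi_*S^2\mathcal{E}$. By Proposition \ref{dim-tan} these conditions hold at every point of the central fibre $M_v(\mathcal{S}_0)$, where moreover $\sigma_0$ is an isomorphism onto $M_{C_0}(2,K_{C_0},s)\cong S_0$ by Theorem \ref{caso(s+1)gon}. Since $M_v(\mathcal{S}_0)$ is complete and $\mathcal{M}_v\to B$ is proper, the open locus in $\mathcal{M}_v$ where (i),(ii),(iii) hold contains a full neighbourhood $\mathcal{M}_v|_U$ of the central fibre, so for $t$ near $0$ the conditions hold at \emph{every} point of $M_v(S_t)$. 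Proposition \ref{L:rk1dsigma} then guarantees, for such general $t$, that $\sigma_t$ has injective differential everywhere, that its image lands in the locus where $M_{C_t}(2,K_{C_t},s)$ is smooth of dimension $2$, and that $\sigma_t$ is étale there.

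It remains to organise these local facts globally. For general $t$ the proper map $\sigma_t\colon M_v(S_t)\to M_{C_t}(2,K_{C_t},s)$ has irreducible two-dimensional image lying in the smooth locus; at each of its points $M_{C_t}$ is smooth of dimension $2$ and hence coincides locally with the image, so the image is closed, open, irreducible, smooth and reduced, i.e.\ a whole irreducible component, which I take to be $V_C(2,K_C,s)$. The induced map $\sigma_t\colon M_v(S_t)\to V_C(2,K_C,s)_{red}$ is a finite étale morphism of smooth projective K3 surfaces; its degree is locally constant on $B$ and equals $1$ on the central fibre, so $\sigma_t$ is an isomorphism and $V_C(2,K_C,s)_{red}$ is a K3 surface. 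The assertion on Voisin bundles is then immediate, since $\wt E_L\in M_v(S_t)$ for $L\in W^1_{s+2}(C_t)$ by Lemma \ref{stab_pic_z}, so each $E_L=\sigma_t(\wt E_L)$ lies on this component; uniqueness follows because $M_{C_t}$ is smooth of dimension $2$ at every point of the image, so no second component can pass through these points. Finally, all the conditions used are open on $\mathcal{P}_g$, which is irreducible with dense Rank-1 locus, so the nonempty open set produced by the family contains the general Rank-1 pair.

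The main obstacle is exactly the bookkeeping forced by the negative expected dimension of $M_C(2,K_C,s)$: this Brill--Noether locus cannot be analysed directly, and the entire strategy rests on importing smoothness and étaleness from the explicit Rank-2 central fibre. The delicate points are the construction of a well-behaved relative moduli space $\mathcal{M}_v\to B$ with a universal sheaf, the genuine openness of (i),(ii),(iii) for the relative second symmetric power $S^2\mathcal{E}$, and the use of properness of $\mathcal{M}_v\to B$ to promote openness on the total space to a statement valid on entire fibres; the degree-one conclusion for $\sigma_t$ likewise depends essentially on the specialization to the isomorphism $\sigma_0$ of Theorem \ref{caso(s+1)gon}.
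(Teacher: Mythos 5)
Your proposal follows essentially the same route as the paper: degenerate a general Rank-1 pair to a Rank-2 pair via a family over a curve $B$, use Theorem \ref{caso(s+1)gon} and Proposition \ref{dim-tan} on the central fibre, and propagate conditions (i)--(iii) of Proposition \ref{L:rk1dsigma} to nearby fibres by semicontinuity and properness, concluding via the specialization of the embedding $\sigma_0$. The only divergence is bookkeeping: the paper verifies (i)--(iii) only along the explicit section of Voisin bundles $\mathcal{E_L}(b)$ (thereby avoiding any appeal to a universal sheaf on the relative moduli space, and supplying a short exact-sequence argument for the openness of (iii)) and then identifies the component using irreducibility of the image, whereas you check the conditions at every point of each fibre; both versions work.
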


 Before going into the proof we need     some preliminaries.  In the next statement we will refer to the notations introduced in diagram (\ref{stacks}). 

\begin{lem}\label{stackcurve}
Let $(S_0,C_0) \in \mathcal{P}_g$ be a pair belonging to the Rank-2 case. Then there exists a nonsingular affine curve B and a pair $(\mathcal{S},\mathcal{C})$  
with the following properties. There is a diagram of smooth families over $B$  
\begin{equation}\label{Bfamily}
\xymatrix{
\mathcal{C}\ar[dr] \ar@{^(->}[rr]&   & \mathcal{S} \ar[dl] \\
&B}
\end{equation}
whose fibre $(\mathcal{S}(b_0),\mathcal{C}(b_0))$  over $b_0$ is $(S_0,C_0)$, and such that for all $b \ne b_0$ outside a countable subset the fibre $(\mathcal{S}(b),\mathcal{C}(b))\in \mathcal{P}_g$ is a pair belonging to the Rank-1 case.
\end{lem}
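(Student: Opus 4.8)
We are given a pair $(S_0, C_0) \in \mathcal{P}_g$ in the Rank-2 case, and we must construct a one-parameter family of K3 surfaces with marked curves, over a nonsingular affine base $B$, whose special fibre is $(S_0, C_0)$ and whose very general fibre is a Rank-1 pair. Let me think about this as a deformation-theoretic problem.

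The Rank-2 surface $S_0$ has $\Pic(S_0) = \mathbb{Z}[A] \oplus \mathbb{Z}[B]$ with $[C_0] = [A+B]$, so $\rho(S_0) = 2$. A general polarized K3 surface of genus $g$ has Picard rank $1$. The locus in moduli where the Picard rank jumps to contain a specific extra class is a divisor (a "Noether-Lefschetz divisor"). So morally, $S_0$ lives on a Noether-Lefschetz divisor inside $\mathcal{K}_g$ (the class $[B]$ being the extra divisor), and I want to deform $S_0$ off this divisor while keeping the polarization class $[C_0]$ — thereby losing the splitting $[C_0] = [A]+[B]$ and dropping to Picard rank $1$.

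So the structure of the deformation argument should hinge on understanding the period map / local deformation space.

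Let me think about the local structure at $(S_0, H_0)$ where $H_0 = \mathcal{O}(C_0)$.

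**The plan.** The plan is to work with the moduli stack $\mathcal{K}_g$ of polarized K3 surfaces and its universal family, and to realize the desired $B$ as (the pullback to a smooth curve of) a suitable one-parameter subfamily. First I would recall that $\mathcal{K}_g$ is a smooth $19$-dimensional Deligne--Mumford stack (as stated in the Introduction) and that the local deformations of the polarized pair $(S_0, H_0)$, with $H_0 = \mathcal{O}_{S_0}(C_0)$, are unobstructed and governed by $H^1(S_0, T_{S_0})$ together with the condition that the polarization class $H_0$ stay of type $(1,1)$. Via the local Torelli theorem, the period domain is an open subset of the $19$-dimensional quadric in $\mathbb{P}(H_0^\perp \otimes \mathbb{C})$, where $H_0^\perp$ is the orthogonal complement of $H_0$ in the K3 lattice; keeping a class $\delta$ (the primitive class with $[B]$ in the span of $\{H_0,\delta\}$) of type $(1,1)$ cuts out a hypersurface $\mathcal{N}_\delta \subset \mathcal{K}_g$, and the very general point of $\mathcal{K}_g$ near $(S_0,H_0)$ has Picard group exactly $\mathbb{Z}\cdot H_0$.

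Second, I would choose a smooth pointed curve $(B, b_0)$ mapping to $\mathcal{K}_g$ with image through $[(S_0,H_0)]$ and with tangent direction \emph{not} contained in the tangent space to any Noether--Lefschetz locus $\mathcal{N}_\delta$ for $\delta \notin \mathbb{Z}\cdot H_0$. Concretely: the tangent space $T_{[(S_0,H_0)]}\mathcal{K}_g$ has a distinguished hyperplane $T_{b_0}\mathcal{N}_{[B]}$ corresponding to keeping $[B]$ algebraic, and each other $\mathcal{N}_\delta$ contributes its own proper linear subspace; since there are only countably many classes $\delta \in H_0^\perp$, a very general tangent direction avoids all the $\mathcal{N}_\delta$ that are proper. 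I would pick such a direction and take $B$ to be a smooth affine curve realizing it as a one-dimensional family of polarized K3s. Pulling back the universal family over $\mathcal{K}_g$ gives the smooth family $\mathcal{S} \to B$; the relative polarization is a relatively very ample line bundle whose fibrewise linear system, being base-point-free and very ample (as in the Rank-2 and Rank-1 cases — cf. Lemma~\ref{a-b}c) and the fact that $|C|$ is the polarization), lets me choose a section cutting out a smooth divisor $\mathcal{C} \subset \mathcal{S}$ over (a shrinking of) $B$ restricting to $C_0$ over $b_0$. This produces the diagram~(\ref{Bfamily}).

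Third, I must verify the two fibre conditions. Over $b_0$ the fibre is $(S_0,C_0)$ by construction. For $b \neq b_0$: by the choice of tangent direction, the image curve in $\mathcal{K}_g$ meets each proper Noether--Lefschetz divisor $\mathcal{N}_\delta$ in a discrete (hence, on the affine curve, at most countable) set of points; outside the union of these countably many points the fibre $\mathcal{S}(b)$ has $\Pic(\mathcal{S}(b)) = \mathbb{Z}\cdot H_b$, i.e.\ it is a Rank-1 pair, with $\mathcal{C}(b) \in |H_b|$ smooth by generic smoothness of the chosen section. That $(\mathcal{S}(b),\mathcal{C}(b)) \in \mathcal{P}_g$ then follows since $H_b$ is very ample and $\mathcal{C}(b)$ is smooth and irreducible. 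The countable exceptional set is exactly the one allowed in the statement.

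**The main obstacle.** The delicate point is the \emph{transversality / genericity} in the second step: I must guarantee that a one-parameter deformation direction exists which escapes \emph{all} the countably many Noether--Lefschetz divisors $\mathcal{N}_\delta$ except the one ($\mathcal{N}_{[B]}$, or rather the ambient $\mathcal{K}_g$) along which we are forced to stay. This is where a Baire-category or measure-theoretic argument is needed: each $\mathcal{N}_\delta$ with $\delta \notin \mathbb{Z}H_0$ is a proper analytic hypersurface through $[(S_0,H_0)]$, there are countably many of them, and a very general tangent vector — equivalently a very general germ of curve — lies on none of their tangent spaces; hence its generic point leaves all of them. Making this precise requires the local Torelli theorem and the description of $\mathcal{N}_\delta$ as the zero locus of the period of $\delta$, together with the observation that keeping \emph{one} extra class (here $[C_0]=H_0$) algebraic is automatic while keeping a \emph{second} independent class algebraic is a codimension-one condition. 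Everything else is standard: smoothness of $\mathcal{K}_g$, existence of the universal family after passing to a cover or to an \'etale neighborhood, and the existence of a smooth relatively-ample section cutting out $\mathcal{C}$, which follows from Bertini applied to the very ample relative polarization.
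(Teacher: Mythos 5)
Your proposal is correct and follows essentially the same route as the paper: the paper simply takes a general nonsingular affine curve through the point of the flag Hilbert scheme $\mathcal{F}_g$ parametrizing $C_0\subset S_0\subset \mathbb{P}^g$ and pulls back the universal family, leaving implicit exactly the Noether--Lefschetz countability and transversality argument that you spell out via the period map. The only cosmetic difference is that by working with the flag Hilbert scheme the paper obtains the inclusion $\mathcal{C}\subset\mathcal{S}$ and a universal family for free, whereas you work in $\mathcal{K}_g$ and must separately extend the section defining $C_0$ to produce $\mathcal{C}$.
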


\proof   Let $\mathcal{H}_g$ be the open subset   of the Hilbert scheme of $\PP^g$ parametrizing nonsingular K3 surfaces of degree $4s$ and let 
$\mathcal{F}_g\longrightarrow \mathcal{H}_g$ be the open subset of the flag Hilbert scheme parametrizing pairs $C \subset S \subset \PP^g$ with $[S] \in \mathcal{H}_g$ and $C \in |\mathcal{O}_S(1)|$. 
Then $(S_0,C_0)$ corresponds to a point $b_0  \in \mathcal{F}_g$. Let $B\subset \mathcal{F}_g$ be a general nonsingular affine curve through $b_0$. Then the pullback to $B$ of the universal family
over $\mathcal{F}_g$ has the required properties. \endproof

\medskip

{\it Proof of Theorem \ref{Tmain}.} The family (\ref{Bfamily}) defines naturally a varying Mukai vector $\upsilon(b)$ such that $v=\upsilon(b_0)$;    as $b \in B$ varies the moduli spaces 
$M_{\upsilon(b)}(\mathcal{S}(b))$ fit into a family 
$\varphi:\mathcal{M}_\upsilon(\mathcal{S}/B)\longrightarrow B$ of projective  surfaces. Modulo shrinking $B$ if necessary, we may assume that this is a family of K3 surfaces. Similarly, the moduli spaces $M_{\mathcal{C}(b)}(2,K_{\mathcal{C}(b)})$ fit into a smooth proper family
$\mathcal{M}_{\mathcal{C}/B}(2,\omega_{\mathcal{C}/B})\longrightarrow B$ of relative dimension $3g-3$.  By the openness of (semi)stability (\cite{Huybrechts-Lehn}, Proposition 2.3.1) and the properness of $\varphi$ we may assume that the restriction morphisms 
\[
 \sigma_b':M_{\upsilon(b)}(\mathcal{S}(b)) \longrightarrow M_{\mathcal{C}(b)}(2,K_{\mathcal{C}(b)})
\]
are well defined. They define a morphism of relative moduli spaces:
\begin{equation}\label{modoverB}
\xymatrix{
\mathcal{M}_\upsilon(\mathcal{S}/B) \ar[rr]^-{\Sigma'}\ar[dr]_-\varphi&&\mathcal{M}_{\mathcal{C}/B}(2,\omega_{\mathcal{C}/B})\ar[dl]\\
&B}
\end{equation}
 Over $b_0$ we have 
\[
\Sigma'(b_0)=\sigma_0': M_{v_0}(S_0) \longrightarrow M_{C_0}(2,K_{C_0})
\]
 which is an embedding, with image 
$M_{C_0}(2,K_{C_0},s) = V_{C_0}(2,K_{C_0},s)$.  Therefore, modulo shrinking $B$ if necessary, we may assume that \emph{for all $b\in B$} we have that $\Sigma'(b)=\sigma'_b$ embeds the K3 surface
$M_{\upsilon(b)}(\mathcal{S}(b))$ into $M_{\mathcal{C}(b)}(2,K_{\mathcal{C}(b)})$, and the image is contained in $M_{\mathcal{C}(b)}(2,K_{\mathcal{C}(b)},s)$.

Modulo performing an \'etale base change we may further assume that there is a line bundle $\mathcal{L}$ on $\mathcal{C}$ such that 
$\mathcal{L}(b) \in W^1_{s+2}(\mathcal{C}(b))$ and $L_0:=\mathcal{L}(b_0)$ is a base point free $g^1_{s+2}$. 
Consider the corresponding family $\mathcal{E_L}$ of Voisin bundles on $\mathcal{S}$. The vector bundle 
$\mathcal{E_L}(b_0)$ over $S_0$ satisfies conditions (i),(ii) and (iii) of Proposition \ref{L:rk1dsigma} as all bundles in $M_v(S_0)$ do (see Section \ref{bn_vb}). 
Therefore by upper-semicontinuity we may assume that all bundles 
$\mathcal{E_L}(b)$ satisfy at least (i) and (ii) as well. Moreover, by construction, they also satisfy $h^0(\mathcal{E_L}(b))=s+2$, so that in particular 
$S^2 H^0(\mathcal{E_L}(b))$ has  constant dimension. Moreover they also satisfy $h^0(S^2 \mathcal{E_L}(b))= h^0(S^2 \mathcal{E_L}(b)_{|\mathcal{C}(b)})$-2, as shown by the exact sequence: 
\[
\xymatrix{
0 \ar[r] & S^2 \mathcal{E_L}(b)(-\mathcal{C}(b)) \ar[r] & S^2 \mathcal{E_L}(b) \ar[r] & S^2 \mathcal{E_L}(b)_{|\mathcal{C}(b)} \ar[r] & 0
}
\]
because $h^1( S^2 \mathcal{E_L}(b)(-\mathcal{C}(b)))=2$. 
Therefore semicontinuity applies and condition (iii) can be also assumed to be satisfied for all $b \in B$. 

We now apply Proposition \ref{L:rk1dsigma} and we deduce that $M_{\mathcal{C}(b)}(2,K_{\mathcal{C}(b)},s)$ is smooth of dimension 2 at $\sigma_b(\mathcal{E_L}(b))$. 
Therefore $\sigma_b$ embeds $M_{v(b)}(\mathcal{S}(b))$ into an irreducible 2-dimensional generically smooth component $V_{\mathcal{C}(b)}(2,K_{\mathcal{C}(b)},s)$
 of $M_{\mathcal{C}(b)}(2,K_{\mathcal{C}(b)},s)$ whose reduction is therefore isomorphic to 
$M_{v(b)}(\mathcal{S}(b))$.  This component is uniquely determined by the condition of containing the bundles $E_{\mathcal{L}(b)}$. \qed

%%%%%%%%%%%%%%%%%%%%%%%%%%%%%%%%%

\section{The Fourier-Mukai transform}\label{FM}

As usual we consider a pair $(S,C)$   which we assume to be either in the Rank-1  or in the Rank-2 case. If we are in the Rank-1 case we denote by $T$ the K3 surface  $V_C(2,K_C,s)_{red}$ introduced in the previous section. We do the same in the Rank-2 case where, by virtue of Theorem \ref{caso(s+1)gon}, we have 
$V_C(2,K_C,s)_{red}=M_C(2,K_C,s)$. In both cases we have an isomorphism
$$
\sigma:  M_v(S) \overset\cong\longrightarrow T\,,\qquad\text{where }\quad v=(2,[C], s)
$$
We will always view the K3 surface $T$ as a sub variety of $M_C(2, K_C)$.
We further  assume that
$$
s=2t+1\,,\qquad\text{i.e.}\quad g\equiv 3, \mod 4
$$
Following Mukai's program (Remark 10.3 in \cite{Mukai-BN-Fano}) and its implementation in genus eleven (Section 4 of \cite{Mukai2}) we are going to prove the following theorem.

\begin{theorem}\label{torelli_type}
There exists
a Poincar\'e bundle
$$
\xymatrix{\U\ar[d]\\
C\times T
}
$$
{\it unique up to isomorphism},  having the following properties.
Denote by  $\pi_C: C\times T\to C$ and  $\pi_T: C\times T\to T$ the two  projections, then

\begin{itemize}
	\item[i)]   $ \U_{|C\times\{[E]\}}\cong E\,,\quad \forall \,\,\, [E]\in T\subset M_C(2, K_C)$
	
	\item[ii)]  $\det(\U)\cong K_C\boxtimes h_{det}$, where $ h_{det}=(\det R^1{\pi_T}_*\U)\otimes(\det{\pi_T}_*\U)^{-1}$

	\end{itemize}
	
	Moreover:
	
	\begin{itemize}
	
	\item[iii)]  $h_{det}$ is a polarization of genus $g$ on $T$.
	
	\item[iv)]  For each $x\in C$, the vector bundle $\U_x=\U_{|\{x\}\times T}$ is stable and $[\U_x]\in M_{\wh v}(T)$, where 
	$\wh v=(2,h_{det},s)$
	
	  \item[v)] The morphism $C\longrightarrow \wh T= M_{\wh v}(T)$ defined by $x\mapsto \U_x$
	  is an embedding.
	  \item[vi)] The  Fourier-Mukai transform  $(\wh T, \wh h_{det})$ of $(T, h_{det})$ is isomorphic to $(S, h)$, where $h=[C]$.
		 
\end{itemize}
\end{theorem}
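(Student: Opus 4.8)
The plan is to build $\U$ by restricting a universal sheaf on $S\times M_v(S)$, and then to extract (iii)--(vi) from the Fourier--Mukai machinery attached to the isotropic vector $v=(2,[C],s)$. The first point is the existence of a universal sheaf $\F$ on $S\times M_v(S)$. Since $\langle v,v\rangle=[C]^2-4s=0$, Lemma \ref{L:nowalls1} already gives that $M_v(S)$ is a K3 surface; the hypothesis that $s$ is odd is exactly what makes $M_v(S)$ a \emph{fine} moduli space, because there is an integral class $w$ with $\langle v,w\rangle=1$ (one has $\langle v,(1,0,0)\rangle=-s$, $\langle v,(0,0,-1)\rangle=2$ and $\langle v,(0,[C],0)\rangle=4s$, whose gcd is $1$ precisely when $s$ is odd). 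Restricting $\F$ to $C\times M_v(S)$ and transporting along the isomorphism $\sigma\colon M_v(S)\to T$ of Theorem \ref{Tmain} (respectively Theorem \ref{caso(s+1)gon} in the Rank-2 case) produces a sheaf on $C\times T$ whose fibre over $[E]$ is $\E_{|C}=E$, which is (i). Two universal bundles on $C\times T$ differ by $\pi_T^*M$; the determinant of cohomology $h_{det}$ is unchanged under such a twist (projection formula, since $\pi_{T*}\U$ and $R^1\pi_{T*}\U$ have the same rank $s+2$), while $\det\U$ changes by $\pi_T^*M^{\otimes 2}$. Hence there is a unique twist making $\det\U\cong K_C\boxtimes h_{det}$ once one knows $h_{det}\otimes N_0^{-1}$ is divisible by $2$ in the torsion-free lattice $\Pic(T)$; this parity statement I would check explicitly in the Rank-2 case (where it reads $h_{det}\otimes N_0^{-1}=2A$) and propagate by deformation. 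Uniqueness then follows, since $M^{\otimes 2}=\mathcal{O}_T$ forces $M=\mathcal{O}_T$. This gives (i) and (ii).

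With (ii) in hand, restricting $\det\U=K_C\boxtimes h_{det}$ to $\{x\}\times T$ gives $c_1(\U_x)=h_{det}$ \emph{immediately}, so the only remaining numerical input is $h_{det}^2=2g-2$ and $\chi(\U_x)=s+2$. I would obtain these from one explicit computation in the Rank-2 case: restricting the universal extension (\ref{univ-ext}) of Proposition \ref{S-to-M} to $\{x\}\times S$ yields $0\to\mathcal{O}_S\to\U_x\to I_x(B-A)\to 0$, and after the normalising twist by $\mathcal{O}(A)$ one finds $\mathrm{ch}(\U_x)=2+[C]+(s-2)[\mathrm{pt}]$, that is $\wh v=(2,[C],s)=v$ with $h_{det}=[C]$. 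Thus $h_{det}^2=4s=2g-2$, so $h_{det}$ has genus $g$ and is ample because $[C]$ is; both the self-intersection and ampleness are preserved along the family of Lemma \ref{stackcurve} degenerating the Rank-1 case to the Rank-2 case, which upgrades (iii) (and the achievability of the normalisation above) to the general case. The same computation shows $\langle\wh v,\wh v\rangle=0$, so that $M_{\wh v}(T)$ is again a K3 surface.

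The heart of the matter is (iv)--(vi), which I would deduce from Mukai's reflexivity. The universal sheaf $\F$ defines a Fourier--Mukai equivalence $\Phi\colon D^b(S)\to D^b(T)$, and for $x\in C\subset S$ one has $\U_x\cong\Phi(\mathcal{O}_x)$ up to the normalising twist; since $\Phi$ is an equivalence and $\mathcal{O}_x$ is stable, $\Phi(\mathcal{O}_x)$ is an $h_{det}$-stable sheaf, and twisting by a line bundle on $T$ preserves stability, so $[\U_x]\in M_{\wh v}(T)$, which is (iv). As $x$ ranges over $S$, the assignment $x\mapsto\Phi(\mathcal{O}_x)$ is exactly Mukai's isomorphism $S\xrightarrow{\ \cong\ }M_{\wh v}(T)=\wh T$; this is (vi) at the level of the underlying surfaces, and restricting it to $C\subset S$ gives the embedding of (v), under which $C\to\wh T\cong S$ recovers the original inclusion, so the construction reconstructs the pair $(S,C)$. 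Finally the polarisations match: the whole assignment $(S,[C])\mapsto(T,h_{det})\mapsto(\wh T,\wh h_{det})$ is involutive up to isomorphism, hence $(\wh T,\wh h_{det})\cong(S,[C])$; I would verify the involution in the Rank-2 case (where $S\cong T\cong\wh T$ and all three polarisations equal $[C]$) and transport it to the Rank-1 case by the deformation argument of Theorem \ref{Tmain}.

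The main obstacle, as I see it, is the very last point: matching the \emph{polarised} K3 surfaces, i.e. showing that $\wh h_{det}$ corresponds to $[C]$ rather than to some other class of the same square. Mukai's theory delivers the bare isomorphism $\wh T\cong S$ essentially for free, but pinning down the polarisation requires either a careful analysis of the Hodge isometry induced by $\Phi$ on the full Mukai lattice, or—more robustly, and closer to the paper's method—the degeneration to the Rank-2 case together with the deformation control already set up for Theorem \ref{Tmain}. The delicate bookkeeping there is keeping the normalisation $\det\U=K_C\boxtimes h_{det}$ and the identification $\wh v=(2,h_{det},s)$ mutually compatible across the whole family, so that the Rank-2 equality $h_{det}=[C]$ spreads out to the general fibre.
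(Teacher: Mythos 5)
Your overall architecture is close to the paper's: a normalized Poincar\'e bundle whose existence and uniqueness hinge on the oddness of $s$ (the paper obtains it by descent from the Quot scheme over $C$ using $\gcd(8s,s+2)=1$ rather than by fineness of $M_v(S)$, but both routes work), reduction of (iii)--(v) to the Rank-2 case by deformation invariance, and Mukai duality for (vi). However, there is a concrete computational error that breaks your verification of (iii)--(vi). In the Rank-2 case the twist of $\F_{|\{x\}\times T}$ by $\mathcal{O}(A)$ is \emph{not} the normalising twist. The normalisation ii) forces $c_1(\U_x)=h_{det}$, and $h_{det}$ is computed from the pushforwards of the universal family (it is invariant under twisting by $\pi_T^*M$, precisely because $h^0=h^1=s+2$ on all of $T$): the paper finds $h_{det}=\mathcal{O}(sA-(s-2)B)$, not $[C]=A+B$. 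The correct twist is $\mathcal{O}((t+1)A-tB)$, solving $M^{\otimes 2}=h_{det}\otimes\mathcal{O}(A-B)$, and accordingly $\wh v=(2,\,sA-(s-2)B,\,s)\neq v$. Your claim that in the Rank-2 case ``all three polarisations equal $[C]$'' --- which is exactly what makes your verification of (vi) trivial --- is therefore false. The real content of (vi) is that the hat operation sends $(S,A+B)$ to $(T,sA-(s-2)B)$ and back, which the paper checks via Mukai's explicit formula $\wh h=\psi-2s\phi$ applied to the Chern classes of the universal extension of Proposition \ref{S-to-M}. (Your parity observation survives: $h_{det}\otimes N_0^{-1}=2((t+1)A-tB)$ is indeed divisible by $2$, just not equal to $2A$.)

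The second gap is in (iv): a Fourier--Mukai equivalence does not automatically carry the stable sheaf $\mathcal{O}_x$ to an $h_{det}$-stable sheaf. Equivalences preserve simplicity, not slope or Gieseker stability with respect to a prescribed polarization; preservation of stability is a genuine theorem that must be proved in each situation. The paper sidesteps this by observing that, setting $A'=tA-(t-1)B$ and $B'=(t+1)A-tB$, one has $\Pic(T)=\ZZ\cdot A'\oplus\ZZ\cdot B'$ with $g(A')=s$, $g(B')=1$ and $|C'|=|A'+B'|$ for $C'\in|h_{det}|$; that is, $(T,C')$ is again a Rank-2 configuration, so Theorem \ref{caso(s+1)gon} applies verbatim to give the isomorphism $T\to M_{\wh v}(T)$, $x\mapsto \U_x$, and with it the stability of $\U_x$ and the embedding of $C$ required in (v). You would need either this self-replication of the Rank-2 structure or a direct stability argument to close (iv) and (v); once the class $h_{det}$ is corrected as above, the rest of your deformation strategy for passing to the Rank-1 case is sound.
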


\proof
Write
$$
M_C(2, K_C)=R/P GL(\nu)\,,\qquad R\subset \operatorname{Quot}
$$
consider the quotient  map 
$$
p: R\to M_C(2, K_C)
$$
and set
$$
R'=p^{-1}(T)
$$
Let $\wt\U$ be the restriction to $C\times R'$ of the universal bundle over $C\times  \operatorname{Quot}$. 
Consider the projection
$$
\pi_{R'}: C\times R'\to R'
$$

The sheaf
${\pi_{R'}}_*\wt\U$ is a vector bundle of rank $s+2$, while  ${\pi_{R'}}_*(\wt\U\boxtimes K_C) $ is a vector bundle of rank $8s$:
indeed $h^0(E\otimes K_C)=3\deg K_C+2(1-g)=8s$. Since $s=2t+1$, the two integers $8s$ and $s+2$ are relatively prime
and we can find integers $x$ and $y$ such that $1+x(s+2)+y8s=0$.

Consider then the vector bundle on $C\times R'$:
$$
\V= \wt\U\boxtimes\pi_{R'}^*(\det({\pi_{R'}}_*\wt\U)^{x}\otimes \det( {\pi_{R'}}_*(\wt\U\boxtimes K_C))^{y})
$$
The action of a central element $c\in \CC^*\subset GL(\nu)$ on the three factors are :  $1$, $c^{x(s+2)}$ and $c^{8ys}$.
Thus the vector bundle   $\V$ is acted on by  $\PP GL(\nu)$ and descends to a Poincar\'e bundle $\V$ on $C\times T$.
Since $T$ is regular there exists a line bundle $L$ on $T$ such that
$$
\det \V=K_C\boxtimes L
$$
Thus
$$
\V^\vee\boxtimes \pi_C^*K_C\cong \V\boxtimes \pi_T^*L^{-1}
$$
As a consequence by Serre duality: 
$$
(R^1{\pi_{T}}_*\V)^\vee\cong{\pi_T}_*(\V^\vee\boxtimes \pi_C^*K_C)\cong{\pi_T}_*\V\otimes L^{-1}
$$
Hence
$$
h_{det}=(\det R^1{\pi_T}_*\V)\otimes(\det{\pi_T}_*\V)^{-1}\cong L^{s+2}\otimes (\det{\pi_T}_*\V)^{-2}
$$
Now the universal bundle 
$$
\U=\V\boxtimes L^{t+1}\boxtimes  (\det{\pi_T}_*\V)^{-1}
$$
satisfies both $i)$  and  $ii)$. The unicity follows from the fact that $\Pic (T)$ is torsion free.

Since properties $iii)$,  $iv)$ and  $v)$ are invariant under small deformations, we may limit ourselves to the rank-two case.
In this case we have the universal extension
\be\label{univ-ext}
0\to \mathcal{O}_{S\times T}(\rho^*B)\to \F\to \I_\Delta(\rho^*A+\tau^*(B-A)) \to 0
\ee
Where  $\rho$ and $\tau$ are the projections $S\times T\to S$ and $S\times T\to T$. Moreover we identify $S$ and $T$ via the 
isomorphism 
\be\label{ident-s-t}
\aligned
S&\longrightarrow M_v(S)\overset \sigma\longrightarrow T\\
\qquad&x\mapsto\,\,\, \E_x\,\,\,\mapsto\,\,\,  E_x={\E_x}_{|C}
\endaligned
\ee
(c.f. Theorem \ref{caso(s+1)gon} and Proposition  \ref{S-to-M}). 
We have
$$
\det(\F_{|C\times T})=K_C\boxtimes\mathcal{O}(B-A)
$$
Consider
$$
0\to \tau_*\mathcal{O}_{S\times T}(\rho^*B)\to\tau_* \F\to \tau_* \I_\Delta(\rho^*A+\tau^*(B-A))\to R^1\tau_*\mathcal{O}_{S\times T}(\rho^*B)\to0
$$
which gives
$$
0\to H^0(\xi)\otimes \mathcal{O}_T\to\tau_*(\F_{C\times T})\to H^0(\eta)\otimes \mathcal{O}_T(B-A)\to\mathcal{O}_T(B)\to 0
$$
We know that
$$
h_{\det}=L^{s+2}\otimes\det \tau_*(\F_{C\times T})^{-2}
$$
We then have
$$
L=\mathcal{O}(B-A)\,,\qquad  \det \tau_*(\F_{C\times T})=\mathcal{O}(sB-(s+1)A)\,,\qquad h_{\det}=\mathcal{O}(sA-(s-2)B)
$$
Thus $h_{\det}$ is a positive polarization and its genus is given by
$$
g(h_{\det})=\frac{1}{2}(sA-(s-2)B)^2+1=2s+1\,.
$$
proving $iii)$.

In the rank-two case the normalized Poincar\'e bundle is given by 
$$
\U=\F_{|C\times T}\boxtimes \tau^*\mathcal{O}((t+1)A-tB)
$$
Let $C'$ be a smooth element in $|sA-(s-2)B|$. Set $B'=(t+1)A-tB$ and $A'=tA-(t-1)B$.
Under the identification given by (\ref{ident-s-t}) we consider $C'$, $A'$ and $B'$ as divisors in $T$.
We may then consider the rank-two case given by the decomposition
$$
\Pic(T)=\ZZ\cdot A'\oplus\ZZ\cdot B'\,,\qquad |C'|=|A'+B'|
$$
For this case the universal extension can be given by tensoring (\ref{univ-ext}) by $ \tau^*\mathcal{O}(B')$.
For each $x\in S$ setting $\U_x=\U_{|\{x\}\times T}$, we get
$$
0\to\mathcal{O}_T(B')\to\U_x\to I_x(A')\to 0
$$
We also get an isomorphism  (Theorem \ref{caso(s+1)gon})
\be\label{ident-s-t2}
\aligned
T&\longrightarrow M_{\wh v}(T)\\
\qquad&x\mapsto\,\,\, \U_x
\endaligned
\ee
and a fortiori an embedding
\be\label{emb-c-t2}
\aligned
C&\longrightarrow M_{\wh v}(T)\\
\qquad&x\mapsto\,\,\, \U_x
\endaligned
\ee
Finally we want to show that  
$(\wh T, \wh h_{det})=(M_{\wh v}(T), \wh h_{det})$ may be identified with  $(S,h)$.
Since we  have the  isomorphism $\sigma: \wh S= M_v(S)\to T$, we have $(\wh T, \wh h_{det})=(\wh{\wh S},h')=(S,h')$
for some polarization $h'$ of genus $g$. In the rank one case we necessarily have $h'=h$. Let us show that also in the rank two case we have $\wh h_{det}=h$. To simplify notation we will prove the equivalent statement that 
$\wh h=h_{det}$, which means
\be
\wh {[C]}=[C']=[sA-(s-2)B]
\ee
\vskip 0.3 cm
From \cite{Mukai-Duality}, we recall the procedure one has to follow to construct $\wh h$, starting from $h$.

We let $\wh S=M_v(S)$, where 
$$
v=(2, h, s)
$$
Then $\wh S$ is again a K3 surface and there is a universal family $\F$ on $\wh S\times S$.
Let 
$$
c_1(\F)=h+\phi\in H^2(S)\oplus H^2(\wh S)\,,\qquad \text{and} \quad c_2^{mid}(\F)
\in H^2(S)\oplus H^2(\wh S)
$$
be the first Chern class of $\E$ and the middle K\"unneth component of the second Chern class respectively.
Define a class $\psi\in H^2(\wh S)$ by
$$
h\cup  c_2^{mid}(\F)=p\otimes \psi\in H^4(S)\oplus H^2(\wh S)
$$
where $p$ is the fundamental class of $S$. Both $\phi$ and $\psi$ are algebraic by Lefschetz theorem.
Then the class $\wh h$ is given by:
$$
 \wh h=\psi-2s\phi
$$
We now consider the rank two case in which  $\Pic(S)\cong \ZZ\cdot A\oplus\ZZ\cdot B$ and we look at
the exact sequence (\ref{univ-ext})

\be
0\to\mathcal{O}_{S\times \wh S}(p^*B)\to \F\to I_\Delta(p^*A+q^*(B-A))\to 0
\ee
We get
$$
c_1(\F)=c_1(p^*(A+B))+c_1(q^*(B-A))
$$
Thus
$$
h=c_1(p^*(A+B))\,,\qquad \phi=c_1(q^*(B-A))
$$

On the other hand
$$
c_2(\F)=c_1(p^*(B))\cup (c_1(p^*(A)+c_1(q^*(B-A))+\Delta
$$
so that
$$
c_2(\F)^{mid}=c_1(p^*(B))\cup (c_1(q^*(B-A))+\Delta
$$
Therefore, as a class in $H^2(\wh S)$,
$$
\psi=((A+B\cdot B)[B-A]+[A+B]=(s+2)B-sA
$$
As a conclusion
$$
\wh h=sA-(s-2)B.
$$

\endproof

\bibliographystyle{abbrv}

\bibliography{bibliografia}

\end{document}